\newlist{enum-hypothesis}{enumerate}{1}
\setlist[enum-hypothesis]{label=(\arabic*),itemsep=0pt, parsep=0pt}
\newtheorem{theorem}{Theorem}[section]
\newtheorem{proposition}[theorem]{Proposition}
\newtheorem{lemma}[theorem]{Lemma}
\newtheorem{corollary}[theorem]{Corollary}
\newtheorem{definition}[theorem]{Definition}
\theoremstyle{plain}
\newtheorem{remark}[theorem]{Remark}
\theoremstyle{break}
\theoremstyle{nonumberplain}
\newtheorem{proof}{Proof}
\numberwithin{equation}{section}
\newcommand{\bbB}{\mathbb{B}}
\newcommand{\bbC}{\mathbb{C}}
\newcommand{\bbN}{\mathbb{N}}
\newcommand{\bbR}{\mathbb{R}}
\newcommand{\bbS}{\mathbb{S}}
\newcommand\bbZ{\mathbb{Z}}
\newcommand{\bbbone}{{\text{\usefont{U}{dsss}{m}{n}\char49}}}
\newcommand{\calA}{\mathcal{A}}
\newcommand{\calB}{\mathcal{B}}
\newcommand{\calC}{\mathcal{C}}
\newcommand{\calE}{\mathcal{E}}
\newcommand{\calH}{\mathcal{H}}
\newcommand{\calK}{\mathcal{K}}
\newcommand{\calR}{\mathcal{R}}
\newcommand{\calU}{\mathcal{U}}
\newcommand{\pmu}{\partial_\mu}  
\newcommand{\pnu}{\partial_\nu}   
\newcommand{\prho}{\partial_\rho}  
\newcommand{\psigma}{\partial_\sigma}  
\newcommand{\nmu}{\nabla_{\!\mu}}   
\newcommand{\nnu}{\nabla_{\!\!\nu}}   
\newcommand{\nrho}{\nabla_{\!\rho}}   
\newcommand{\tonc}{\xrightarrow{\text{n.c.}}} 
\newcommand{\hnabla}{\widehat{\nabla}}
\newcommand{\hnmu}{\hnabla_{\!\mu}}
\newcommand{\hnnu}{\hnabla_{\!\nu}}
\newcommand{\hDelta}{\widehat{\Delta}}
\newcommand{\mul}{{\bf m}}
\newcommand{\gnabla}{{}^g{\nabla}}
\newcommand{\Ric}{\text{Ric}}
\newcommand{\SC}{\mathfrak{R}}
\newcommand{\SO}{X}
\newcommand{\dd}{\text{d}}
\newcommand{\dvolg}{\text{dvol}_g}
\newcommand{\vc}{\vcentcolon =}   
\newcommand{\cv}{=\vcentcolon }   
\newcommand{\OO}{{\text{\usefont{OMS}{cmsy}{m}{n}O}}}   
\def\comFu_#1{[F_{#1}, u]}
\def\comFUu^#1{[F^{#1}, u]}
\def\comFN^#1_#2{[F_{#2}, N^{#1}]}
\def\anticomHLHN_#1{\{\hDelta, \hnabla_{#1}\}}
\def\Sumgigi^#1{G^{#1}}
\def\Sumgg_#1{G_{#1}}
\newcounter{termscounter}
\newcommand\EqLine[1]{
\stepcounter{termscounter}%
}
\DeclareMathOperator{\Tr}{Tr}	   
\DeclareMathOperator{\tr}{tr}	   
\DeclareMathOperator{\End}{End}	 
\DeclareMathOperator*{\sumperm}{%
\mathchoice%
{\sum\kern-7pt\raise-0.5pt\hbox{\small$\mathsf{P}$}}
{\sum\kern-5.5pt\raise-0.4pt\hbox{\scriptsize$\mathsf{P}$}}
{}
{}
} 
\DeclarePairedDelimiter\abs{\lvert}{\rvert}
\newcounter{mnotecount}[section]
\renewcommand{\themnotecount}{\thesection.\arabic{mnotecount}}
\newcommand{\mnote}[1]%
{\protect{\stepcounter{mnotecount}}${}^{\text{\footnotesize$\bullet$\themnotecount}}$%
\reversemarginpar%
\marginpar{\raggedleft\footnotesize$\bullet$\themnotecount: #1}}
\newlength{\mnotewidth}
\begin{document}
{
\makeatletter\def\@fnsymbol{\@arabic}\makeatother 
\title{Heat coefficient $a_4$ for non minimal Laplace type operators}
\author{B. Iochum, T. Masson\\
{\small Centre de Physique Théorique} \footnote{bruno.iochum@cpt.univ-mrs.fr, thierry.masson@cpt.univ-mrs.fr}\\
\small{Aix Marseille Univ, Université de Toulon, CNRS, CPT, Marseille, France}\\[2ex]
}
\date{}
\maketitle
}

\begin{abstract}
Given a smooth hermitean vector bundle $V$ of fiber $\mathbb{C}^N$ over a compact Riemannian manifold and $\nabla$ a covariant derivative on $V$, let $P = -(\lvert g \rvert^{-1/2} \nabla_\mu \lvert g \rvert^{1/2} g^{\mu\nu} u \nabla_\nu + p^\mu \nabla_\mu +q)$ be a non minimal Laplace type operator acting on smooth sections of $V$ where $u,\,p^\nu,\,q$ are $M_N(\mathbb{C})$-valued functions with $u$ positive and invertible. 
For any $a \in \Gamma(\text{End}(V))$, we consider the asymptotics $\text{Tr} \,a \,e^{-tP} \sim_{t \downarrow 0} \,\sum_{r=0}^\infty a_r(a, P)\,t^{(r-d)/2}$ where the coefficients $a_r(a, P)$ can be written as an integral of the functions $a_r(a, P)(x) = \text{tr}\,[a(x) \,\mathcal{R}_r(x)]$.\\
This paper revisits the previous computation of $\mathcal{R}_2$ by the authors and is mainly devoted to a computation of $\mathcal{R}_4$. The results are presented with $u$-dependent operators which are universal (\textsl{i.e.} $P$-independent) and which act on tensor products of $u$, $p^\mu$, $q$ and their derivatives via (also universal) spectral functions which are fully described. 
\end{abstract}

\bigskip
 \noindent{\bfseries Keywords:} Heat kernel, Nonminimal operator, Asymptotic heat trace, Laplace type operator

\section{An introduction to the method}

Let $V$ be a smooth hermitean vector bundle $V$ of fiber $\bbC^N$ over a compact $d$-dimensional boundaryless Riemannian manifold $(M,g)$ and let $P$ be a non minimal Laplace type operator acting on the smooth sections $\Gamma(V)$, written locally as the partial differential operator $P= -[g^{\mu\nu} u(x) \,\partial_\mu\partial_\nu +v^\mu(x)\,\partial_\mu +w(x)]$ where $x\in M$ and $u, v^\mu, w$ are matrices in $M_N(\bbC)$ with $u(x)$ positive and invertible. \\
For a smooth section $a \in \Gamma(\End(V))$, the existence of an asymptotics for the heat-trace $\Tr\, a e^{-tP} \sim_{t \downarrow 0} \,\sum_{r=0}^\infty a_r(a, P)\,t^{(r-d)/2}$ is known (see \cite{Gilk95a,Gilk03a}), with coefficients given by $a_r(a, P) = \int_M \dvolg(x)\, a_r(a, P)(x) $, where $\dvolg(x) \vc \abs{g}^{1/2} \dd x$ and $\abs{g} \vc \det(g_{\mu\nu})$; more precisely given by
\begin{equation*}
a_r(a, P)(x) \vc \tr\,[a(x)\, \calR_r(x)]
\end{equation*}
where $\tr$ is the trace on $M_N(\bbC)$, and $\calR_r$ is a (local) section of $\End(V)$. 

The explicit knowledge of the $a_r$ and $\calR_r$ is important both in mathematics and physics, and several attempts can be found in the literature for many classes of operators $P$, see the books \cite{BerlGetzVerg92a, Gilk95a, Avra15a, Vassilevich}. While here we extend a previous method \cite{IochMass17a, IochMass17c}, this paper is actually self-contained.

To start with, it is convenient to use a covariant derivative $\nabla$ on $V$ and to parameterize the differential operator $P$ as:
\begin{align}
P &\vc -(\abs{g}^{-1/2} \nmu \abs{g}^{1/2} g^{\mu\nu} u \nnu + p^\mu \nmu +q)
= - g^{\mu\nu} u \nmu \!\nnu - \big( p^\nu + g^{\mu\nu} (\nmu u) - \Gamma^\nu u \big) \nnu - q,
\label{eq-def-P-upq}
\end{align}
where $u, p^\mu, q$ are sections of $\End(V)$ (see \cite[Appendix A.4]{IochMass17a} for the swap between $(v^\mu,w)$ and $(p^\mu,q)$) and 
\begin{align*}
\Gamma^\nu \vc g^{\mu\rho} \Gamma^\nu_{\mu\rho} \text{ where the }\Gamma_{\mu \rho}^\nu \text{ are the Christoffel symbols of } g.
\end{align*}

The computation of $\calR_r$ is realized through $\Tr\,a e^{-tP} = \int_M  \!\dvolg(x)\, \tr\,[a(x) \,\calK(t,x,x)]$, where $\calK(t,x,x)$ is the diagonal of the kernel of $e^{-tP}$ defined for any section $s \in \Gamma(V)$ by 
$\int_M \!\dvolg(y) \, \calK(t,x,y) \,s(y) = (e^{-tP} s)(x)$. 
Recall that this kernel can be computed using a compactly supported section $s$ with support in a open subset $U$ of $M$ which gives at the same time a chart of $M$ and a trivialization of $V$ (and $\End(V)$). In that situation, we can look at $s$ as a map $s : U \to \bbC^N$. \\
The use of the Fourier transform  $\widehat{s}\,(\xi) \vc (2 \pi)^{-d/2} \int_U \dd y \, e^{- i y \cdot\xi} s(y)$ of $s$ with inverse $s(x) = (2 \pi)^{-d/2} \int_{\bbR^d} \dd \xi \, e^{i x \cdot\xi}\, \widehat{s}\,(\xi)$ yields to
\begin{align*}
(e^{-tP} s)(x) 
= (2 \pi)^{-d/2} \int_{\bbR^d} \!\!\!\dd \xi \, (e^{-tP} e^{i x \cdot\xi}) \,\widehat{s}\,(\xi) 
= (2 \pi)^{-d} \int_{\bbR^d} \!\!\!\dd \xi \int_U \dd y \,  e^{- i y \cdot\xi}(e^{-tP} e^{i x \cdot\xi}) \,s(y).
\end{align*}
For fixed $(x,y)\in M\times M$, we can look at  $e^{- i y \cdot\xi}(e^{-tP} e^{i x \cdot\xi})$, and then at $\calK(t,x,y) = (2 \pi)^{-d} \abs{g}^{-1/2}(y) \int_{\bbR^d} \!\dd \xi \, e^{- i y \cdot\xi}(e^{-tP} e^{i x \cdot\xi})$, as maps $\bbC^N \to \bbC^N$, because the function $x \mapsto e^{i x \cdot\xi}$ “absorbs” all the derivatives operators in $e^{-tP}$. We now give another expression for the matrix $\calK(t,x,y) \in M_N(\bbC)$. Since for any $s : U \to \bbC^N$, 
\begin{align}
&- (P \,e^{i x \xi} s)(x)  \nonumber
\\
&\qquad=
e^{i x \xi} \big( \big[ 
-g^{\mu\nu} u \xi_\mu \xi_\nu 
+ i \xi_\mu \big( p^\mu + g^{\mu\nu} (\nnu \,u) - \Gamma^\nu u + 2 g^{\mu\nu} u \nnu \big) 
+u\Delta 
+ \big( p^\nu + g^{\mu\nu} (\nmu u) - \Gamma^\nu u \big) \,\nnu 
+ q
\big] \,s \big) (x) \label{eq-P and exp}
\nonumber
\\
&\qquad= - e^{i x \xi} ([ H + K + P ] \,s)(x)
\end{align}
where we have introduced
\begin{align}
\Delta &\vc -g^{\mu\nu}\, \nmu \! \nnu, \nonumber\\
K&\vc - i \xi_\mu \big( p^\mu + g^{\mu\nu} (\nnu u) - \Gamma^\mu u + 2 g^{\mu\nu} u \nnu \big),
\label{eq-def-K-up}
\\
H &\vc g^{\mu\nu} u \,\xi_\mu \xi_\nu,
\label{eq-def-Hmunu}
\end{align}
one gets $e^{-i y \cdot \xi} (e^{-tP} \,e^{i x \cdot \xi} \,s)(x) = e^{i (x - y) \cdot \xi} (e^{-t\,[ H + K + P ]} \,s)(x)$. \\
The operators $H$ and $K$ being essential here, they will be also decomposed as $H = \xi_\mu \xi_\nu \,H^{\mu\nu}$ and $K = \xi_\mu \,K^\mu$ where
\begin{align}
&H^{\mu\nu}
\vc g^{\mu\nu} u, \nonumber
\\
\nonumber
&K^\mu \vc -i \big( p^\mu + g^{\mu\nu} (\nnu u) - \Gamma^\mu u + 2 g^{\mu\nu} u \nnu \big)
=  - i \big( L^\mu +  2 H^{\mu\nu} \nnu \big),\\
\nonumber
&L^\mu
\vc N^\mu
- \Gamma^\mu u,
 \\
\label{eq-def-Nmu}
&N^\mu  \vc p^\mu + g^{\mu\nu} (\nnu\, u),
\end{align}
and thus 
\begin{equation}
\label{def-PHLq}
P = - H^{\mu\nu}\, \nmu \nnu - L^\mu\, \nmu - q.
\end{equation}
The introduction of the variable $N^\mu$ will be justified in Lemma \ref{lem-N=0}.

Then, for any $v \in \bbC^N$ and at fixed values $x$ and $y$,
\begin{align*}
\calK(t,x,y)\, v 
&= 
(2 \pi)^{-d} \abs{g}^{-1/2}(y) \int_{\bbR^d} \!\!\!\dd \xi \, e^{i (x - y) \cdot \xi} e^{-t\,[ H + K + P ]}\, v 
= 
t^{- d/2} (2 \pi)^{-d} \abs{g}^{-1/2}(y) \int_{\bbR^d} \!\!\!\dd \xi \, e^{i (x - y) \cdot \xi} e^{- H - \sqrt{t} \,K - t\, P}\, v,
\end{align*}
and the diagonal part of the kernel is
$\bbC^N \ni v \mapsto \calK(t,x,x)\, v = t^{- d/2} (2 \pi)^{-d} \abs{g}^{-1/2} \int_{\bbR^d} \dd \xi \, e^{- H - \sqrt{t}\, K - t\, P} v$.\\
From now on, $v \in \bbC^N$ will be considered as a locally constant section of $V$. 

The computation of this matrix-valued function  $\calK$ is based on the Volterra series  
\begin{align*}
e^{\,A+B}
=
e^{\,A} +\sum_{k=1}^\infty \,\int_{\Delta_k} \dd s\, e^{\,(1-s_1)\,A}\,B\,e^{\,(s_1-s_{2})\,A} \cdots e^{\,(s_{k-1}-s_k)\,A} \,B \,e^{\,s_k \,A}\,,
\end{align*}
where
\begin{equation*}
\Delta_k \vc \{s = (s_1,\dots, s_k) \in \bbR_+^{k} \,\, \vert \,\, 0 \leq s_k \leq s_{k-1} \leq \cdots \leq s_2 \leq s _1 \leq 1\}\quad \text{(we also use the convention }s_0\vc 1,\,s_{k+1}\vc 0),
\end{equation*}
and by convention $\Delta_0 \vc \varnothing$. This gives, for $A = -H$ and $B = -t^{1/2} K -tP$ (omitting the $\xi$-dependence),
\begin{align}
e^{-H-\sqrt{t}\, K -tP} v 
= e^{-H } v + \sum_{k=1}^\infty (-1)^k f_k [ (\sqrt{t} K +t P)^{\otimes ^k} ] \,v
\label{Volt}
\end{align}
where for any $k \in \bbN$, the map $f_k (\xi) : M_N[\xi,\nabla]^{\otimes^k} \to M_N[\xi]$ is defined, for any $v \in \bbC^N$, by
\begin{align}
\begin{aligned}
& f_k(\xi) [ B_1 \otimes \cdots \otimes  B_k ] \,v
\vc \int_{\Delta_k}\! \dd s \, e^{\,(s_1-s_0)\, H(\xi)} \, B_1 \,e^{\,(s_2-s_1) \,H(\xi)} \, B_2 \cdots B_k\, e^{(s_{k+1}-s_k) \,H(\xi)}\, v,
\\
& f_0(\xi)[\lambda] v
\vc \lambda\, e^{-H(\xi)} v, \quad \text{for } \lambda\in \bbC \cv M_N(\bbC)^{\otimes^0}.
\end{aligned}
\label{eq-fk-def}
\end{align}
Here the $B_i$ are matrix-valued differential operators in $\nmu$ depending on $x$ and (linearly in) $\xi$, and $\lambda \in \bbC$. 
\begin{remark}[Notation and convention]
\label{rmk fk convention}
By convention, each $\nmu$ in $B_i\in M_N[\xi,\nabla]$ acts on all its right remaining terms. Since $v$ is constant, \emph{this allows to identify $f_k(\xi) [ B_1 \otimes \cdots \otimes  B_k ]$ as matrix valued functions}, $v \mapsto f_k(\xi) [ B_1 \otimes \cdots \otimes  B_k ] \,v$, even if the $B_i$'s contain derivative operators. 
\end{remark}

Proposition~\ref{lem-fk-nabla-vector-propagation} will get rid of these explicit differential operators in the arguments of $f_k(\xi)$ and will produce formulas with matrix-valued arguments only. This is an essential result for the method.

The first terms of \eqref{Volt} are (omitting again the $\xi$-dependence)
\begin{align*}
e^{-H-\sqrt{t} K -tP} v
= 
& \,f_0[1]\,  v
 - t^{1/2} f_1[K] \,v
+ t\big(f_2[K\otimes K]-f_1[P] \big)\, v
 + t^{3/2} \big(f_2[K\otimes P]+f_2[P\otimes K]-f_3[K\otimes K\otimes K]\big) \,v 
 \\
 & + t^2 \big( 
f_2[P\otimes P] - f_3[K\otimes K\otimes P]- f_3[K\otimes P \otimes K] - f_3[P \otimes K \otimes K]
+ f_4[K \otimes K \otimes K \otimes K]\big)\, v  +\OO(t^2).  \nonumber
\end{align*}

Since the $\xi$-integral cancels the non-integers powers of $t$, one recovers the coefficients $a_r$ via the asymptotics behavior $\Tr \,[ a e^{-tP}] \underset{t\downarrow 0}{\sim} t^{- d/2} \sum_{r=0}^\infty a_{r}(a, P) \, t^{r/2}$: at the point $x$,
\begin{align*}
&a_0(a,P)(x)
= \tr \tfrac{\abs{g}^{-1/2}}{(2 \pi)^{\,d}}\, a(x) \!\int_{\bbR^d} \dd \xi \, f_0[1],
\\
&a_2(a,P)(x)
=  \tr \tfrac{\abs{g}^{-1/2}}{(2 \pi)^{\,d}}\, a(x)\! \int_{\bbR^d}  \dd \xi \, ( f_2[K\otimes K] - f_1[P] ),
\\
&a_4(a,P)(x)
= \tr \tfrac{\abs{g}^{-1/2}}{(2 \pi)^{\,d}} \,a(x) \!\int_{\bbR^d}  \!\dd \xi \, 
\big( f_2[P\otimes P] - f_3[K\otimes K\otimes P] - f_3[K\otimes P \otimes K]
- f_3[P \otimes K \otimes K]
+ f_4[K \otimes K \otimes K \otimes K]\big).
\end{align*}
where the convention of Remark~\ref{rmk fk convention} is adopted. \\
While it is tempting to generalize $H^{\mu\nu}=g^{\mu\nu} u$ to an arbitrary strictly positive matrix $H^{\mu\nu}$ (with $H^{\mu\nu}=H^{\nu\mu}$), it is almost impossible to obtain simple expression for the $a_r$. For instance the $\xi$-integral of $a_0(a,P)(x)$ cannot be done explicitly (see for instance \cite{IochMass17a} for details and \cite[Section 3.3]{Avramidi2005} for a link with Finsler metrics).

The computation of $a_r$ goes the following way.

First the calculus of the $\xi$-integral of \eqref{eq-fk-def} leads, via the spectral decomposition of $H(\xi)=g^{\mu\nu} u \,\xi_\mu \xi_\nu$, to the universal functions 
$I_{\alpha,k}:\,(r_0, \dots, r_k)\in  (\bbR_+^*)^{\,k+1} \mapsto \int_{\Delta_k} \dd s\,\,[\,\sum_{\ell=0}^k \,(s_\ell-s_{\ell+1})\,r_\ell \,]^{\,-\alpha}$ , see Section~\ref{Section def of I} or \cite{IochMass17a, IochMass17c}. 

Then, as shown in Section~\ref{Section def of I}, the family of functions $I_{\alpha,k}$ satisfies a few relations. Moreover, in the same section, these relations  are translated in terms of operators $X_{\alpha,k}$ (see Definition~\ref{eq-def-Xalphak}) acting on the tensor product $ B_1 \otimes \cdots \otimes  B_k $ of $M_N(\bbC)$-valued differential operators and $I_{\alpha, k}$ can be seen as the spectral functions associated to the action of $X_{\alpha,k}$. The differential aspect, which is an important part of the game here, is not a difficulty because the family of $X_{\alpha,k}$ is compatible with derivations, see Proposition~\ref{prop-Propagation}. 

The previous described approach and formulae for $a_r(a,P)(x)$ are well-known, see especially \cite{AvraBran01a, AvraBran02a, Avra04a, Avramidi2005, Avra05a}. The originality of this work, compared to previous quoted ones and \cite{IochMass17a, IochMass17c}, is however to perform the computations using operators instead of spectral functions.  One can follow this way more precisely what is the contribution of each term, an information that is \textit{a priori} lost when adding spectral functions coming from different contributions. These operators, respectively $f_k(\xi)$ and $X_{\alpha,k}$, have a universal property since they only depend of a positive invertible matrix-valued which can be either $H(x,\xi)$ or $u(x)$. Thus, in this formal algebraic level, the computations are reduced to a control of the propagation of derivatives inside the arguments, see for instance Proposition~\ref{lem-fk-nabla-vector-propagation}. 

However, to secure the results on the matrices $\calR_r(x)$, we need a covariant formulation of all tools. 
This is the aim of Section~\ref{Total covariant derivative and normal coordinates} where $P$ is presented in a covariant way in equation \eqref{eq-laplacianLCu} and, since we know that $\calR_r(x)$ must then be invariant under a change of coordinates, we choose a normal coordinate system. This implies that the covariant derivative $\nabla$ has to be extended to the total covariant derivative  $\hnabla$ combining both $\nabla$ and the Levi-Civita connection on $M$. Such an extension requires for the sequel to compute beforehand a few formulae on the action of $\hnabla$ on all ingredients.

In particular, in Section~\ref{The method and its simplifications}, we present a formula for the propagation of $\nabla$ within the arguments and show several simplifications due to a few Riemannian contractions which appear all along the computations. This leads to an operational version of the method exposed in Proposition~\ref{prop-simplified-computation}.

In Section~\ref{Results on R2} the whole intermediate steps for the calculation of $\calR_2$ are given and, of course, we recover in this new algebraic setting the previous results of \cite{IochMass17a,IochMass17c}.

While all computations can be done “by hand” for $\calR_2$, the case of $\calR_4$ requires the use of a computer due to the huge number of generated terms. The hard and long part of this work was to develop a code \textit{ab initio}. 
The elaboration of such a code is explained in Section~\ref{The code}. It takes care of all intricate aspects of the computations: the algebraic manipulations quoted before, the use of normal coordinates for higher derivatives and last but not least, the simplification of a large number of terms via a reduction process, see \eqref{eq-reduction}.

Finally, a formula for $\calR_4$ is exhibited in Section~\ref{Results on R4} which is a new result. Of course, it is compatible with old known results like when $u=\bbbone$, see \cite{Gilk03a,Gilk95a}, but is written here in full generality when the section $u$ is parallel for $\nabla$ and the $N^\nu$ are not zero, while the standard presentation always assumes that $N^\nu=0$ (see Lemma \ref{lem-N=0}).

\section{\texorpdfstring{The universal operators $X_{\alpha,k}$}{The universal operators X alpha k}}
\label{Section def of X}

The aim of this section is to define and study the operators $X_{\alpha,k}$ which depends only on a strictly positive matrix-valued function $h:\mathcal{U} \to M_N(\bbC)$ where $\mathcal{U}$ is a given parameter space. Later on, when the differential operator $P$ will play a role, $h$ will be either $u(x)$ with $\mathcal{U}=U$ or $H(x,\xi)=g^{\mu\nu}(x) \,u(x)\,\xi_\mu\xi_\nu$ with $\mathcal{U}=U\times \bbR^d$.

\subsection{\texorpdfstring{The universal spectral functions $I_{\alpha,k}$}{The universal spectral functions I alpha k}}
\label{Section def of I}

Let us first consider the algebra $\calA=M_N(\bbC)$  (although many of the theory could be generalized to a unital $C^\ast$-algebra $\calA$) and $h \in \calA$ which is a positive invertible matrix: 
\begin{align*}
0<h \in \calA.
\end{align*}
For any $k\in \bbN$ and $a_\ell \in \calA$, let $[a_0^R \otimes a_1^R \otimes  \cdots \otimes a_k^R] \in \calB(\calA^{\otimes^{k+1}} ,\calA^{\otimes^{k+1}} )$ (bounded operator from $\calA^{\otimes^{k+1}}$ into itself) be defined by $[a_0^R \otimes a_1^R \otimes  \cdots \otimes a_k^R] [b_0 \otimes \cdots \otimes b_k]\vc b_0 a_0 \otimes b_1 a_1 \otimes \cdots \otimes b_k a_k$. For some reason we want to apply such operators to $\calA^{\otimes^{k}}$ and to do so, we need the injection of $\calA^{\otimes^{k}}$ into $\calA^{\otimes^{k+1}}$:
\begin{align}
\label{def iota}
\kappa_{k}:\,b_1 \otimes \cdots \otimes b_k \mapsto \bbbone \otimes b_1 \otimes \cdots \otimes b_k, \text{ for } k \geq 1, \quad \text{and}\quad  \kappa_{0}(\lambda)\vc \lambda \bbbone,\,\lambda \in \calA^0\vc \bbC.
\end{align}
where $\bbbone$ is the unit of $\calA$. Now we define the operator $(a_0^R \otimes a_1^R \otimes  \cdots \otimes a_k^R) \in \calB(\calA^{\otimes^{k}},\calA^{\otimes^{k+1}})$ as
\begin{align*}
\label{def operator action}
(a_0^R \otimes a_1^R \otimes  \cdots \otimes a_k^R) \vc [a_0^R \otimes a_1^R \otimes  \cdots \otimes a_k^R] \circ \kappa_k.
\end{align*}
We also need, for $0\leq \ell\leq k$, the following family of operators:
\begin{align*}
R_\ell(a) \vc (\bbbone \otimes \cdots \otimes \bbbone\otimes a^R\otimes \bbbone \otimes \cdots \otimes \bbbone) \in \calB(\calA^{\otimes^{k}},\calA^{\otimes^{k+1}}) \text{ where } a^R \text{ is at the }\ell\text{-th place}.
\end{align*}
For any $k$, denote by $\mul : \calA^{\otimes^{k+1}} \to \calA$ the multiplication in $\calA$. Then $\mul \circ (a_0^R \otimes a_1^R \otimes  \cdots \otimes a_k^R) \in \calB(\calA^{\otimes^{k}}, \calA)$ is given by
\begin{equation}
\label{eq- action of tensors}
\big(\mul \circ (a_0^R \otimes a_1^R \otimes  \cdots \otimes a_k^R)\big) [b_1 \otimes \cdots \otimes b_k]
=
a_0 b_1 a_1  \cdots b_k a_k.
\end{equation}

We now consider the functional calculus on $h$, with the shorthand notation
\begin{equation*}
E_\ell \vc E_{r_\ell} \text{ is the spectral projection associated to the spectral value }r_\ell \text{ of } h.
\end{equation*}
Keep in mind that $\ell$ is not the index of a spectral value but is the index of the position in the $(k+1)$-tensor product. \\
The need to compute the $\xi$-integrals of the operators $f_k(\xi)$ drives us to 
\begin{align}
\label{eq-integral of fk}
\int \dd \xi\, \xi_{\mu_1} \cdots \xi_{\mu_{2p}} \,f_k(\xi)
= \mul \circ \int_{\Delta_k} \! \dd s \,\int \dd \xi \, \xi_{\mu_1} \cdots \xi_{\mu_{2p}}\,e^{-C_k(s,\,H(\xi))}
=c_{k,\mu_1\dots\mu_{2p}}\,\mul \circ \int_{\Delta_k} \! \dd s \, C_k(s,u)^{-\,(d/2+p)},
\end{align}
where $C_k(s,a)\vc \sum_{\ell =0}^k (s_\ell-s_{\ell+1})\,R_\ell(a)$, permuting the integrals in the first equality and using a Gaussian integration with spherical coordinates in the second one, see \cite[Eq. (4.4)]{IochMass17a}. Thus the functional calculus for $h$ naturally leads us to the following functions (when $\alpha=d/2+p$):
\begin{definition}
\label{Def of I}
For any $\alpha \in \bbR$ and $k \in \bbN$, let $I_{\alpha,k}: (\bbR_+^*)^{\, k+1}\to \bbR_+$ defined by
\begin{align}
\begin{aligned}
 & I_{\alpha,k}(r_0,\dots,r_k) \vc 
\int_{\Delta_k} \!\dd s\, \,\big[\sum_{\ell=0}^k \,(s_\ell-s_{\ell+1})\,r_\ell \big]^{\,-\alpha} 
=
\int_{\Delta_k}\! \dd s\,\, \big[\sum_{\ell=0}^{k} \,s_\ell (r_\ell -r_{\ell-1}) \big]^{\,-\alpha} 
\quad(\text{with the convention }r_{-1}\vc 0)
\\
& I_{\alpha,0}\, (r_0) \vc r_0^{-\alpha}\,  \text{ for } \alpha \neq 0. 
\end{aligned}
\label{def I0}
\end{align}
\end{definition}
For instance,
\begin{align}
\label{eq-I alpha k (r0, ...,r0)}
& I_{\alpha,k}(\,\underbrace{r_0, \dots, r_0}_{k+1})  = \tfrac{1}{k!}\, r_0^{\,-\alpha},\\
& I_{\alpha,1}(r_0,r_1)= 
\begin{dcases}
\label{eq def I1} 
\, \tfrac{1}{1-\alpha} \tfrac{r_0^{\,1-\alpha} \,-\,r_1^{\,1-\alpha}}{r_0\,-\,r_1} & \text{for } \alpha \neq 1,\\
\,\tfrac{\log(r_0)\,-\,\log(r_1)}{r_0\,-\,r_1} & \text{for } \alpha =1,
\end{dcases}
\end{align}
and see \cite{IochMass17a, IochMass17c} for other explicit expressions for these integrals. \\
We will need the following recursion formulas on the functions  $I_{\alpha, k}$, seen as generalizations of \cite[eq.~(3.1)]{IochMass17a}:
\begin{lemma}
\label{lem-relations-Ialpha}
For any $\alpha \in \bbR$ and $k \in \bbN$ the family of functions $I_{\alpha,k}$ satisfies 
\begin{align}
\hspace{-0.3cm} i) \,\,
I_{\alpha,k}(r_0, \dots, r_k)
= 
r_0\, I_{\alpha+1,k+1}(r_0, r_0, \dots, r_k)
+ r_1 I_{\alpha+1,k+1}(r_0, r_1, r_1, \dots, r_k)
+ \cdots 
+ r_k I_{\alpha+1,k+1}(r_0, r_1, \dots, r_k, r_k).
\label{eq-sum-Ialpha+1k+1}
\end{align}

ii) Moreover, for any $\alpha \neq 1$, $k,\ell \in \bbN^*$ and $1 \leq \ell \leq k$, 
\begin{align}
\label{eq-recursive-I-ell}
I_{\alpha,k}(r_0, \ldots, r_k) 
= \tfrac{1}{1 - \alpha} \tfrac{1}{r_{\ell} - r_{\ell-1}} \left[
I_{\alpha-1,k-1}(r_0, \ldots, \widehat{r_{\ell - 1}}, \ldots, r_k)
-
I_{\alpha-1,k-1}(r_0, \ldots, \widehat{r_{\ell}}, \ldots, r_k)
\right].
\end{align}
\end{lemma}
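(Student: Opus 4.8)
The plan is to treat the two recursions by direct manipulation of the simplex integral defining $I_{\alpha,k}$, using only changes of the integration variables and the elementary identity $x^{-\alpha}y = \tfrac{1}{1-\alpha}\partial_y(xy)^{1-\alpha}$ in a form adapted to the linear ``weights'' $s_\ell - s_{\ell+1}$.

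For part \emph{i)}, I would start from the first form in \eqref{def I0}, namely $I_{\alpha,k}(r_0,\dots,r_k) = \int_{\Delta_k}\dd s\,[\sigma(s)]^{-\alpha}$ with $\sigma(s) = \sum_{\ell=0}^k (s_\ell - s_{\ell+1}) r_\ell$ and the conventions $s_0 = 1$, $s_{k+1}=0$. Since $\sum_{\ell=0}^k (s_\ell - s_{\ell+1}) = 1$, one may write $[\sigma(s)]^{-\alpha} = \sum_{\ell=0}^k (s_\ell - s_{\ell+1})\,r_\ell\,[\sigma(s)]^{-\alpha-1}$, which already produces $k+1$ terms indexed by the ``split position'' $\ell$. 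The term with index $\ell$ should then be rewritten as an integral over $\Delta_{k+1}$ by inserting an extra variable between $s_\ell$ and $s_{\ell+1}$: the factor $s_\ell - s_{\ell+1}$ is exactly $\int_{s_{\ell+1}}^{s_\ell} \dd s$ for a new variable $s$, and after this insertion the argument $\sigma(s)$ becomes the weighted sum over $k+2$ abscissas in which $r_\ell$ is repeated in positions $\ell$ and $\ell+1$. Comparing with Definition~\ref{Def of I} applied to $I_{\alpha+1,k+1}(r_0,\dots,r_\ell,r_\ell,\dots,r_k)$ then gives exactly the $\ell$-th summand, and summing over $\ell$ yields \eqref{eq-sum-Ialpha+1k+1}. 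A small bookkeeping point: one must check the edge cases $\ell = 0$ (new variable inserted just below $s_0 = 1$) and $\ell = k$ (new variable inserted just above $s_{k+1}=0$) fit the same pattern, which they do since the conventions for $s_0,s_{k+1}$ are built into $\Delta_{k+1}$.

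For part \emph{ii)}, I would use the second form in \eqref{def I0}, $I_{\alpha,k} = \int_{\Delta_k}\dd s\,\big[\sum_{j=0}^k s_j (r_j - r_{j-1})\big]^{-\alpha}$ with $r_{-1}=0$, and integrate out the single variable $s_\ell$. On $\Delta_k$ the variable $s_\ell$ ranges over $[s_{\ell+1}, s_{\ell-1}]$ with the other variables fixed; writing the integrand as $[A + s_\ell(r_\ell - r_{\ell-1})]^{-\alpha}$ where $A$ collects the $j\neq\ell$ terms, the inner integral is elementary for $\alpha\neq 1$ and equals $\tfrac{1}{1-\alpha}\tfrac{1}{r_\ell - r_{\ell-1}}\big([A+s_{\ell-1}(r_\ell-r_{\ell-1})]^{1-\alpha} - [A+s_{\ell+1}(r_\ell-r_{\ell-1})]^{1-\alpha}\big)$. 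In the first bracket, setting $s_\ell = s_{\ell-1}$ makes positions $\ell-1$ and $\ell$ carry the same abscissa, so the remaining integral over $(s_1,\dots,\widehat{s_\ell},\dots,s_k)\in\Delta_{k-1}$ is precisely $I_{\alpha-1,k-1}$ with $r_{\ell-1}$ omitted (the telescoping $\sum s_j(r_j-r_{j-1})$ collapses accordingly); in the second bracket, setting $s_\ell = s_{\ell+1}$ identifies positions $\ell$ and $\ell+1$, giving $I_{\alpha-1,k-1}$ with $r_\ell$ omitted. This produces \eqref{eq-recursive-I-ell}.

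The main obstacle is purely notational rather than conceptual: in both parts the reindexing of the simplex after inserting or removing a variable must be done carefully so that the weighted sums $\sum (s_\ell - s_{\ell+1}) r_\ell$ and $\sum s_j (r_j - r_{j-1})$ match Definition~\ref{Def of I} on the nose, including the boundary conventions $s_0 = 1$, $s_{k+1} = 0$, $r_{-1}=0$ and the ``hatted'' omission of an argument. I would handle this by switching freely between the two equivalent forms in \eqref{def I0} (they are related by Abel summation), choosing for each identity the form in which the target variable appears most transparently — the ``increments of $r$'' form for \emph{ii)} and the ``increments of $s$'' form for \emph{i)} — and verifying the $k=0,1$ cases against \eqref{eq-I alpha k (r0, ...,r0)}--\eqref{eq def I1} as a consistency check.
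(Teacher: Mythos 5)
Your proposal is correct and follows essentially the same route as the paper: part \emph{ii)} is the paper's proof verbatim (integrate out $s_\ell$ over $[s_{\ell+1},s_{\ell-1}]$ for $\alpha\neq 1$ and read the two endpoint evaluations as the hatted $I_{\alpha-1,k-1}$'s), while part \emph{i)} is the paper's argument run in reverse — the paper starts from each $I_{\alpha+1,k+1}(r_0,\dots,r_\ell,r_\ell,\dots,r_k)$, integrates out the redundant simplex variable to produce the factor $s_\ell-s_{\ell+2}$ and then recombines, whereas you expand $[\sigma(s)]^{-\alpha}=\sigma(s)\cdot[\sigma(s)]^{-\alpha-1}$ and insert a new variable; these are the same computation. (One cosmetic slip: that expansion follows from $\sigma(s)=\sum_{\ell}(s_\ell-s_{\ell+1})r_\ell$ itself, not from $\sum_{\ell}(s_\ell-s_{\ell+1})=1$ as you state.)
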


\begin{proof}
i) For $0 \leq \ell \leq k$, the integrand in the defining integral of $I_{\alpha+1,k+1}(r_0, \dots, r_\ell, r_\ell, \dots, r_k)$ reduces to
\begin{equation}
\begin{dcases}
\,[(s_0 - s_2) r_0 + (s_2 - s_3) r_1 + \dots + s_{k+1} r_k]^{\,-(\alpha+1)} 
&
\text{for $\ell = 0$,}
\\
\,[(s_0 - s_1) r_0 + \dots + (s_\ell - s_{\ell+2}) r_\ell + \cdots + s_{k+1} r_k]^{\,-(\alpha+1)} 
&
\text{for $0 < \ell< k$,}
\\
\,[(s_0 - s_1) r_0 + \dots + (s_{k-1} - s_k) r_{k-1} + s_k r_k]^{\,-(\alpha+1)} 
&
\text{for $\ell = k$,}
\end{dcases}
\label{eq-integrands-ell}
\end{equation}
and then does not depend anymore of the variable $s_{\ell+1}$. Using
\begin{equation*}
\int_{\Delta_{k+1}} \!\!\!\! \dd s 
= 
\begin{dcases}
    \int_0^{1}\! \dd s_2 
    \cdots 
    \int_0^{s_{k}} \!\dd s_{k+1}
    \int_{s_{2}}^{1} \!\dd s_{1}
    &
    \text{for $\ell = 0$,}
    \\  
    \int_0^1 \dd s_1  
    \cdots 
    \int_0^{s_{\ell-1}} \!\dd s_{\ell} 
    \int_0^{s_{\ell}} \!\dd s_{\ell + 2} 
    \int_0^{s_{\ell + 2}}\! \dd s_{\ell + 3} 
    \cdots 
    \int_0^{s_{k}} \!\dd s_{k+1}
    \int_{s_{\ell +2}}^{s_{\ell}}\! \dd s_{\ell+1}
    &
    \text{for $0<\ell<k$,}
    \\
    \int_0^1 \dd s_1  
    \int_0^{s_1} \!\dd s_2 
    \cdots 
    \int_0^{s_{k}} \!\dd s_{k+1}
    &
    \text{for $\ell = k$,}
\end{dcases}
\end{equation*}
the integration along $s_{\ell+1}$ produces, respectively, the factors $(1 - s_2)$, $(s_{\ell} - s_{\ell +2})$, and $s_{k}$. \\
Let us perform the change of variables 
\begin{equation*}
\begin{dcases}
(s_2, s_3, \dots, s_{k+1}) \to (s_1, s_2, \dots, s_{k})
&
\text{for $\ell=0$,}
\\
(s_1, s_2, \dots, s_\ell, s_{\ell+2}, \dots, s_{k+1}) \to (s_1, s_2, \dots, s_\ell, s_{\ell+1}, \dots, s_{k})
&
\text{for $0 <\ell < k$.}
\end{dcases}
\end{equation*}
There is no need to change the variables for $\ell=k$. Then, for all $0 \leq \ell \leq k$, the remaining integration is over $\Delta_k$ and all the brackets in \eqref{eq-integrands-ell} coincide (with the case $\ell=k$). This implies that the RHS of \eqref{eq-sum-Ialpha+1k+1} can be recombined, through the factorization of this common bracket, as a single integral
\begin{align*}
\int_{\Delta_k} \dd s \,
\big[ (s_0 - s_1) r_0 + \cdots + (s_{\ell} - s_{\ell +1}) r_{\ell} + \cdots + (s_k-s_{k+1}) r_k \big]\,
\big[ (s_0 - s_1) r_0 + \cdots + (s_k -s_{k+1}) r_k \big]^{\,-(\alpha+1)},
\end{align*}
which is nothing else but $I_{\alpha,k}(r_0, \dots, r_k)$.

ii) For any $\ell$, $1 \leq \ell \leq k$,  one has 
$\int_{\Delta_k} \! \dd s = 
\int_0^1 \dd s_1  
\int_0^{s_1} \dd s_2 
\cdots 
\int_0^{s_{\ell-2}} \dd s_{\ell - 1} 
\int_0^{s_{\ell - 1}} \dd s_{\ell + 1} 
\int_0^{s_{\ell + 1}} \dd s_{\ell + 2} 
\cdots 
\int_0^{s_{k - 1}} \dd s_k
\int_{s_{\ell + 1}}^{s_{\ell - 1}} \dd s_\ell$. \\
Performing the last integration along $s_\ell$, one gets
\begin{multline*}
\int_{s_{\ell + 1}}^{s_{\ell - 1}}\! \dd s_\ell \,
[r_0 + s_1 (r_1 -  r_0) + \dots + s_k (r_k -  r_{k-1})]^{\,-\alpha}
\\
\begin{aligned}
& =
\tfrac{1}{1 - \alpha} \tfrac{1}{r_{\ell} - r_{\ell-1}} \big[ [r_0 + s_1 (r_1 -  r_0) + \cdots + s_k (r_k -  r_{k-1})]^{1-\alpha} \big]_{\,s_\ell \,=\, s_{\ell + 1}}^{\,s_\ell \,= \,s_{\ell - 1}}
\\
&
=
\tfrac{1}{1 - \alpha} \tfrac{1}{r_{\ell} - r_{\ell-1}} [r_0 + \cdots + s_{\ell - 1}(r_{\ell} - r_{\ell - 2}) + s_{\ell + 1}(r_{\ell + 1} - r_{\ell}) + \cdots + s_k (r_k -  r_{k-1})]^{1-\alpha}
\\
&\hspace{0.5cm}
- \tfrac{1}{1 - \alpha} \tfrac{1}{r_{\ell} - r_{\ell-1}} [r_0 + \cdots + s_{\ell - 1}(r_{\ell - 1} - r_{\ell - 2}) + s_{\ell + 1}(r_{\ell + 1} - r_{\ell - 1}) + \cdots + s_k (r_k -  r_{k-1})]^{1-\alpha}.
\end{aligned}
\end{multline*}
An integration along the other variables $s_i$'s gives the equation \eqref{eq-recursive-I-ell}.
\end{proof}

\subsection{\texorpdfstring{Definitions and properties of the operators $X_{\alpha,k}$}{Definitions and properties of the operators Xalpha}}

As before, $h$ is a positive invertible element in $\calA=M_N(\bbC)$.
\begin{definition}
For any $k\in \bbN$, given a function $f:\,(\bbR_+^*)^{\,k+1} \to \bbC$, we define the operator $\pi_f$ acting on $\calA^{\otimes^{k}}$ as
\begin{align}
\label{eq-def Pi f}
\pi_f \vc f(r_0, \dots, r_k) \, \,\mul\circ (E_0^R \otimes \cdots \otimes E_k^R) \in \calB(\calA^{\otimes^{k}}, \calA), 
\end{align}
with an (implicit) summation over $k+1$-tuples $(r_0, \ldots, r_k)$ of spectral values of $h$. 
\end{definition}
In particular, using \eqref{eq- action of tensors} for $\lambda\in \bbC,\,b_\ell \in \calA$,
\begin{align}
&\pi_f[\lambda]= f(r_0)\lambda E_0=\lambda f(h)\in \calA \, \text{ for } k=0, \label{eq-def I k=0} \\
&\pi_f[ b_1 \otimes \cdots \otimes b_k] 
= f(r_0, \dots, r_k) \, E_0 b_1 E_1 \cdots E_{k-1} b_k E_k \in \calA \, \text{ for } k\in \bbN^*. \label{eq-def Pi f 1}
\end{align}
If $\widehat{f}(r_0,\dots,r_k)\vc\int_Z \dd z \,f(z;r_0,\dots r_k)$, where $Z$ is a measured space, then
\begin{align}
\label{eq-pi integral}
\pi_{\widehat{f}}[b_1\otimes \cdots \otimes b_k]=\int_Z \dd z\, \pi_{f(z)}[b_1\otimes \cdots \otimes b_k]
\end{align}
since the implicit summation over $r_0,\dots ,r_k$ is finite and $E_0 b_1 E_1 \cdots E_{k-1} b_k E_k$ is $z$-independent.

\bigskip
The spectral function $f$ in the equation \eqref{eq-def Pi f 1} has a peculiar modification if one of the variables $b_\ell$ is a function of $h$:

\begin{lemma}
For any continuous function $F:\bbR_+^* \to \bbC$ and $1\leq \ell \leq k$, we have
\begin{align}
&\pi_f [ b_1 \otimes \cdots \otimes b_{\ell-1} \otimes F(h)\otimes b_{\ell+1}\otimes\cdots\otimes b_k]  
= \pi_{\tilde{f}} [ b_1 \otimes \cdots \otimes b_{\ell-1} \otimes b_{\ell+1}\otimes \cdots\otimes b_k], \label{lem bi=h}
\\
& \text{with }\tilde{f}(r_0,\dots,r_{k-1}) \vc F(r_{\ell-1})\,f(r_0,\dots,r_{\ell-1},r_{\ell-1},r_\ell,\dots,r_{k-1}). \nonumber
\end{align}
\end{lemma}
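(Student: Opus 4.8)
The plan is to unwind the definition of $\pi_f$ given in \eqref{eq-def Pi f 1} and exploit that $F(h)$, being a function of $h$, commutes with the spectral projections $E_\ell$ in a controlled way. Explicitly, by definition
\begin{align*}
\pi_f [ b_1 \otimes \cdots \otimes b_{\ell-1} \otimes F(h)\otimes b_{\ell+1}\otimes\cdots\otimes b_k]
= f(r_0,\dots,r_k)\, E_0 b_1 E_1 \cdots E_{\ell-2} b_{\ell-1} E_{\ell-1}\, F(h)\, E_\ell\, b_{\ell+1} E_{\ell+1}\cdots b_k E_k,
\end{align*}
with implicit summation over all $(k+1)$-tuples of spectral values of $h$. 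The key observation is that $F(h)\,E_\ell = F(r_\ell)\,E_\ell$ (functional calculus applied to the spectral projection at the value $r_\ell$), and simultaneously $E_{\ell-1}\,E_\ell = \delta_{r_{\ell-1},r_\ell}\,E_{\ell-1}$ since distinct spectral projections of $h$ are orthogonal. Hence the product $E_{\ell-1} F(h) E_\ell$ is nonzero only when $r_{\ell-1}=r_\ell$, in which case it equals $F(r_{\ell-1})\,E_{\ell-1}$.

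Carrying this out, the summand vanishes unless $r_{\ell-1}=r_\ell$, so the double index collapses to a single value; I would then rename the surviving spectral values as $(r_0,\dots,r_{\ell-1}=r_\ell,\dots,r_k)$, relabel the $k$ independent ones as $(r_0,\dots,r_{k-1})$ by shifting indices past position $\ell$, and pull out the scalar factor $F(r_{\ell-1})$. What remains is
\begin{align*}
\sum F(r_{\ell-1})\, f(r_0,\dots,r_{\ell-1},r_{\ell-1},r_\ell,\dots,r_{k-1})\; E_0 b_1 E_1\cdots E_{\ell-2} b_{\ell-1} E_{\ell-1} b_{\ell+1} E_\ell \cdots b_k E_{k-1},
\end{align*}
where after relabelling the product of projections and $b$'s is exactly the one defining $\pi_{\tilde f}[b_1\otimes\cdots\otimes b_{\ell-1}\otimes b_{\ell+1}\otimes\cdots\otimes b_k]$ with $\tilde f(r_0,\dots,r_{k-1}) = F(r_{\ell-1})\,f(r_0,\dots,r_{\ell-1},r_{\ell-1},r_\ell,\dots,r_{k-1})$, as claimed. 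The edge cases $\ell=1$ (where $E_{\ell-1}=E_0$ sits at the left end, adjacent only to $b_1$ on one side) and $\ell=k$ (where $E_\ell=E_k$ sits at the right end) work identically, with the convention $\kappa_0,\kappa_k$ handled by \eqref{eq- action of tensors}; I would note these in one line rather than treat them separately.

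The argument is essentially bookkeeping: the only genuine point is the commutation $E_{\ell-1}F(h)E_\ell = F(r_{\ell-1})\,\delta_{r_{\ell-1},r_\ell}E_{\ell-1}$, everything else being reindexing. The main obstacle, such as it is, will be keeping the index shift after position $\ell$ transparent — making sure the reader sees that deleting the $\ell$-th tensor slot and merging the two coincident spectral values produces precisely the stated shifted argument list of $\tilde f$, with $r_{\ell-1}$ appearing twice in positions $\ell-1$ and $\ell$ of the original $f$. I would present this with one explicit display of the collapsed sum rather than an inductive or fully index-chased proof.
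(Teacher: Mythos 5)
Your argument is correct and coincides with the paper's own proof: both rest on the single identity $E_{\ell-1}F(h)E_\ell = F(r_{\ell-1})\,\delta_{r_{\ell-1},r_\ell}\,E_{\ell-1}$ followed by the relabeling $i\to i-1$ of the summation indices for $i>\ell$. Nothing further is needed.
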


\begin{proof}
Since $E_{\ell-1}F(h) E_{\ell}=F(r_{\ell-1})E_{\ell-1}E_\ell = \delta_{r_\ell,r_{\ell-1}} F(r_{\ell-1}) E_{\ell-1}$, we get
\begin{align*}
\pi_f [ b_1 \otimes \cdots& \otimes b_{\ell-1} \otimes F(h)\otimes b_{\ell+1}\otimes\cdots\otimes b_k]  \\
& =  f(r_0, \dots, r_{\ell-1}, r_\ell, r_{\ell+1}, \dots, r_k) \,E_0 b_1E_1\cdots b_{\ell-1} E_{\ell-1} F(h)E_\ell b_{\ell+1}E_{\ell+1}\cdots  b_k E_k\\
& = F(r_{\ell-1})f(r_0,\dots,r_{\ell-1},r_{\ell-1},\dots,r_{k}) \, E_0 b_1E_1 \cdots b_{\ell-1} E_{\ell-1} b_{\ell+1}E_{\ell+1} \cdots  b_k E_k,
\end{align*}
which, after a relabeling of the summation indices $i \to i-1$ for $i>\ell$, can be written as in \eqref{lem bi=h}.
\end{proof}
Since this result will be widely used, let us give an example: for $a,b\in \calA$ and $f : (\bbR_+^*)^{5}\to \bbC$
\begin{align*}
\pi_f[h^2\otimes a\otimes h^{1/2}\otimes b]=r_0^2 r_1^{1/2}f(r_0,r_0,r_1,r_1,r_2) \, E_0 aE_1bE_2.
\end{align*}

An important case of operators $\pi_f$ is the family of operators $X_{\alpha,k}$ associated to the universal functions $I_{\alpha,k}$ (see Definition~\ref{Def of I}) and to $h$, which will play a crucial role in the sequel precisely because of their universality.
\begin{equation}
\label{eq-def-Xalphak}
X_{\alpha,k} \vc \pi_{I_{\alpha,k}} \text{ for } k\in \bbN.
\end{equation}
Again, for brevity of notation on the use of $X_{\alpha,k}$, both the $h$-dependence and the summation when applied to arguments will be implicit.

From the equations \eqref{eq-I alpha k (r0, ...,r0)} and \eqref{def I0} we immediately check that
\vspace{-0.2cm}
\begin{align}
\label{eq-X on only h}
X_{\alpha,k}[\,\underbrace{h \otimes \cdots \otimes h}_{k}\,] = \tfrac{1}{k!}\, h^{-\alpha}=X_{\alpha,0}[h].
\end{align}
We also remark that for any matrices $\,b_i,\, c$ in $\calA$ such that $[c,h]=0$, we have the two following factorizations:
\begin{align}
&X_{\alpha,k}[b_1 \otimes \cdots \otimes  b_{\ell-1} \otimes b_\ell\,c \otimes b_{\ell+1}\otimes\cdots \otimes b_{k}] 
=  X_{\alpha,k}[b_1 \otimes \cdots \otimes  b_{k}] \,c, \quad \text{if } [c,b_i]=0\text{ for }\ell +1\leq i \leq k, \label{eq-X with au}  \\
&X_{\alpha,k}[b_1\otimes \cdots \otimes b_{\ell-1} \otimes c\,b_\ell \otimes b_{\ell+1}\otimes\cdots \otimes b_{k}]
= c \,X_{\alpha,k}[b_1\otimes \cdots \ \otimes b_{k}], \quad \text{if } [c,b_i]=0\text{ for } 1\leq i \leq \ell-1. \label{eq-X with ua}
\end{align}

For $a\in \calA$ and $\ell = 1, \dots, k$, let $i_{a}^{(\ell)} : \calB(\calA^{\otimes^{k}}, \calA) \to \calB(\calA^{\otimes^{k-1}}, \calA)$ be defined by  
\begin{align}
\label{def-il}
(i_{a}^{(\ell)} \bbB)[b_1 \otimes \cdots \otimes b_{k-1}] \vc \bbB[b_1\otimes \cdots \otimes b_{\ell-1} \otimes a \otimes b_{\ell} \otimes \cdots \otimes b_{k-1}],
\end{align}
which inserts $a$ at the $\ell$-th place in $\bbB$. For instance, one easily checks that
\begin{equation*}
i_{a}^{(\ell)} \, \mul \circ (a_0^R \otimes a_1^R \otimes \cdots \otimes a_{k}^R) 
= 
\mul \circ (a_0^R \otimes a_1^R \otimes \cdots \otimes \left[ a_{\ell-1} a a_{\ell} \right]^R \otimes \cdots \otimes a_{k}^R) \in \calB(\calA^{\otimes^{k-1}}, \calA).
\end{equation*}

\begin{lemma}
\label{lem-Xu}
The operators $X_{\alpha,k}$ satisfy $X_{\alpha,k} = \sum_{\ell = 1}^{k+1}  i_{h}^{(\ell)} X_{\alpha+1,k+1}$.\\
More explicitly, the following expansion holds true for any $b_\ell \in \calA$:
\begin{multline}
\label{eq-Xu}
X_{\alpha, k}[b_1 \otimes \cdots \otimes b_k] = 
\\
\!X_{\alpha+1, k+1}[h \otimes b_1 \otimes \cdots \otimes b_k] 
\!+\! X_{\alpha+1, k+1}[b_1 \otimes h \otimes \cdots \otimes b_k]
\!+ \cdots 
+ \!X_{\alpha+1, k+1}[b_1 \otimes \cdots \otimes b_k \otimes h] .
\end{multline}
\end{lemma}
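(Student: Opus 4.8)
The plan is to reduce Lemma~\ref{lem-Xu} to the identity \eqref{eq-sum-Ialpha+1k+1} of Lemma~\ref{lem-relations-Ialpha}(i) by unpacking the definitions of $X_{\alpha,k}=\pi_{I_{\alpha,k}}$ and $i_h^{(\ell)}$, together with the formula \eqref{lem bi=h} describing what happens to the spectral function when one of the tensor slots is a function of $h$. Since $X_{\alpha,k}[b_1\otimes\cdots\otimes b_k]=\pi_{I_{\alpha,k}}[b_1\otimes\cdots\otimes b_k]$ is, by \eqref{eq-def Pi f 1}, the sum over spectral tuples $I_{\alpha,k}(r_0,\dots,r_k)\,E_0 b_1 E_1\cdots b_k E_k$, and since the matrix part $E_0 b_1 E_1\cdots b_k E_k$ is the same on both sides of \eqref{eq-Xu}, the claimed operator identity is equivalent to the corresponding identity among spectral coefficients, which is precisely \eqref{eq-sum-Ialpha+1k+1}.

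Concretely, I would first treat the typical term $i_h^{(\ell)}X_{\alpha+1,k+1}$ for $1\le \ell\le k+1$, i.e. the term $X_{\alpha+1,k+1}[b_1\otimes\cdots\otimes b_{\ell-1}\otimes h\otimes b_\ell\otimes\cdots\otimes b_k]$. Apply \eqref{lem bi=h} with $F=\mathrm{id}$ at position $\ell$: this replaces the $(k+1)$-argument spectral function $I_{\alpha+1,k+1}$ by the $k$-argument function $\tilde f(r_0,\dots,r_k)=r_{\ell-1}\,I_{\alpha+1,k+1}(r_0,\dots,r_{\ell-1},r_{\ell-1},r_\ell,\dots,r_k)$ acting on $b_1\otimes\cdots\otimes b_k$. (For the extreme cases $\ell=1$ and $\ell=k+1$ the same statement holds, with the $h$ sitting in the first, respectively last, slot; one checks directly from \eqref{eq-def Pi f 1} and $E_{\ell-1}hE_\ell=r_{\ell-1}\delta_{r_\ell,r_{\ell-1}}E_{\ell-1}$ that the formula \eqref{lem bi=h} is still valid with the convention that the duplicated index is $r_0$ for $\ell=1$.) Hence
\begin{equation*}
\Big(\textstyle\sum_{\ell=1}^{k+1} i_h^{(\ell)} X_{\alpha+1,k+1}\Big)[b_1\otimes\cdots\otimes b_k]
= \pi_{g}[b_1\otimes\cdots\otimes b_k],
\end{equation*}
where $g(r_0,\dots,r_k)=\sum_{\ell=1}^{k+1} r_{\ell-1}\,I_{\alpha+1,k+1}(r_0,\dots,r_{\ell-1},r_{\ell-1},r_\ell,\dots,r_k)$; reindexing $j=\ell-1$ so $j$ runs $0,\dots,k$, this $g$ is exactly the right-hand side of \eqref{eq-sum-Ialpha+1k+1}.

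Therefore $g=I_{\alpha,k}$ by Lemma~\ref{lem-relations-Ialpha}(i), so $\pi_g=\pi_{I_{\alpha,k}}=X_{\alpha,k}$, and both the operator identity $X_{\alpha,k}=\sum_{\ell=1}^{k+1} i_h^{(\ell)}X_{\alpha+1,k+1}$ and its explicit form \eqref{eq-Xu} follow. I do not anticipate a serious obstacle: the content is really just Lemma~\ref{lem-relations-Ialpha}(i) dressed in operator language. The one point requiring a little care is bookkeeping at the boundary slots $\ell=1$ and $\ell=k+1$ and the convention $r_{-1}\vc 0$ appearing in Definition~\ref{Def of I} — one must make sure that the $\ell=1$ term genuinely produces the factor $r_0\,I_{\alpha+1,k+1}(r_0,r_0,r_1,\dots,r_k)$ and not a spurious $r_{-1}$; this is immediate since the duplicated spectral value at the head of the tuple is $r_0$. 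I would also note in passing the degenerate case $k=0$: then \eqref{eq-Xu} reads $X_{\alpha,0}[\lambda]=X_{\alpha+1,1}[h]\lambda$, i.e. $\lambda h^{-\alpha}=\lambda h\cdot h^{-\alpha-1}$, consistent with \eqref{eq-X on only h}.
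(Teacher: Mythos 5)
Your proof is correct and follows exactly the route the paper takes: its entire proof of Lemma~\ref{lem-Xu} is the one-line remark that it follows from \eqref{eq-sum-Ialpha+1k+1}, and you have simply made explicit the intermediate bookkeeping (via \eqref{lem bi=h} with $F=\mathrm{id}$ and the reindexing $j=\ell-1$) that converts the operator identity into that spectral-function identity. The boundary-slot and $k=0$ checks you add are sound and harmless.
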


\begin{proof}
This follows from equation \eqref{eq-sum-Ialpha+1k+1}.
\end{proof}

\begin{corollary}
\label{cor-Xan}
For any $a, b\in \calA$ and any $n \geq 1$, one has
\begin{align}
X_{\alpha, 1}[a] &=
n! \sum_{\substack{\ell_1, \ell_2 \geq 0\\ \ell_1 + \ell_2 = n}}
X_{\alpha, n+1}[ \underbrace{h \otimes \cdots \otimes h}_{\ell_1 \text{ times}}\otimes\, a \otimes \underbrace{h \otimes \cdots \otimes h}_{\ell_2 \text{ times}}], \label{eq:Xalpha1}
\\
X_{\alpha, 2}[a \otimes b] &=
n! \sum_{\substack{\ell_1, \ell_2, \ell_3 \geq 0\\ \ell_1 + \ell_2 + \ell_3 = n}}
X_{\alpha, n+1}[ \underbrace{h \otimes \cdots \otimes h}_{\ell_1 \text{ times}} \otimes\, a \otimes \underbrace{h \otimes \cdots \otimes h}_{\ell_2 \text{ times}} \otimes b \otimes \underbrace{h \otimes \cdots \otimes h}_{\ell_3 \text{ times}}]. \label{eq:Xalpha2}
\end{align}
\end{corollary}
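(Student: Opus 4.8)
The plan is to prove both \eqref{eq:Xalpha1} and \eqref{eq:Xalpha2} by induction on $n$, using Lemma~\ref{lem-Xu} — in its expanded form \eqref{eq-Xu} — as the only input: it rewrites $X_{\alpha,k}$ applied to a $k$-fold tensor as the sum, over all $k+1$ slots, of the operators obtained by inserting one extra factor $h$ in that slot. The base case $n=1$ is then immediate: \eqref{eq-Xu} with $k=1$ is exactly \eqref{eq:Xalpha1} for $n=1$, and \eqref{eq-Xu} with $k=2$ is exactly \eqref{eq:Xalpha2} for $n=1$, its three terms being the cases $(\ell_1,\ell_2,\ell_3)=(1,0,0),(0,1,0),(0,0,1)$.

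For the inductive step of \eqref{eq:Xalpha1}, I would apply \eqref{eq-Xu} once more to each summand $X_{\alpha,n+1}[h^{\otimes\ell_1}\otimes a\otimes h^{\otimes\ell_2}]$ of the level-$n$ formula. This argument has $n+1$ tensor slots forming a block of $\ell_1$ copies of $h$, then $a$, then $\ell_2$ copies of $h$; of the $n+2$ positions in which \eqref{eq-Xu} inserts a new $h$, the $\ell_1+1$ lying inside or at the ends of the first block all produce $h^{\otimes(\ell_1+1)}\otimes a\otimes h^{\otimes\ell_2}$, while the remaining $\ell_2+1$ all produce $h^{\otimes\ell_1}\otimes a\otimes h^{\otimes(\ell_2+1)}$. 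Summing over $\ell_1+\ell_2=n$ and collecting the coefficient of a fixed $X_{\alpha,n+2}[h^{\otimes m_1}\otimes a\otimes h^{\otimes m_2}]$ with $m_1+m_2=n+1$, one picks up $n!\,m_1$ from $(\ell_1,\ell_2)=(m_1-1,m_2)$ and $n!\,m_2$ from $(\ell_1,\ell_2)=(m_1,m_2-1)$, hence the total $n!(m_1+m_2)=(n+1)!$; when $m_1=0$ (resp.\ $m_2=0$) only the second (resp.\ first) contribution is present, but then $m_2=n+1$ (resp.\ $m_1=n+1$), so the coefficient is again $(n+1)!$. This reproduces \eqref{eq:Xalpha1} at level $n+1$.

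The step for \eqref{eq:Xalpha2} is the same argument with one more block: applying \eqref{eq-Xu} to $X_{\alpha,n+1}[h^{\otimes\ell_1}\otimes a\otimes h^{\otimes\ell_2}\otimes b\otimes h^{\otimes\ell_3}]$, each of the $n+2$ insertion positions lands in exactly one of the three $h$-blocks, producing $\ell_i+1$ copies of the tensor with $\ell_i$ replaced by $\ell_i+1$; regrouping over $\ell_1+\ell_2+\ell_3=n$, the coefficient of $X_{\alpha,n+2}[h^{\otimes m_1}\otimes a\otimes h^{\otimes m_2}\otimes b\otimes h^{\otimes m_3}]$ with $\sum_i m_i=n+1$ becomes $n!(m_1+m_2+m_3)=(n+1)!$, the empty-block cases handled exactly as before. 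I do not foresee any real obstacle here: the entire content is the combinatorics of distributing $n$ indistinguishable copies of $h$ among the blocks flanking $a$ (and $b$), and the only point demanding a little care is the bookkeeping of the empty blocks in the recursion. As a cross-check one could instead work directly at the level of spectral functions, using \eqref{lem bi=h} together with \eqref{eq-X on only h} to evaluate $X_{\alpha,n+1}$ on tensors built solely from $h$ and then iterating \eqref{eq-sum-Ialpha+1k+1}; this yields the same identities but is longer to carry out.
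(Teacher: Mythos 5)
Your proof is correct and essentially identical to the paper's: both argue by induction on $n$ with Lemma~\ref{lem-Xu} as the sole ingredient, the base case being \eqref{eq-Xu} itself and the inductive step counting the $\ell_i+1$ equivalent insertion positions inside each $h$-block so that regrouping produces the factor $\ell_1+\ell_2=n+1$ (resp.\ $\ell_1+\ell_2+\ell_3=n+1$). The only cosmetic difference is that you collect the coefficient of a fixed target term while the paper re-indexes the two sums and then extends the ranges to include the empty blocks — the same bookkeeping read in opposite directions.
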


\begin{proof}
We prove the first relation \eqref{eq:Xalpha1} by induction. When $n=1$, the use of Lemma~\ref{lem-Xu} yields to the desired relation: $X_{\alpha, 1}[a] = X_{\alpha, 2}[h \otimes a] + X_{\alpha, 2}[a \otimes h]$. Assuming the relation holds for $n \geq 1$, then
\begin{align*}
X_{\alpha, 1}[a] &=
n! \sum_{\substack{\ell_1, \ell_2 \geq 0\\ \ell_1 + \ell_2 = n}}
X_{\alpha, n+1}[ \underbrace{h \otimes \cdots \otimes h}_{\ell_1 \text{ times}} \otimes a \otimes \underbrace{h \otimes \cdots \otimes h}_{\ell_2 \text{ times}}],
\\
&=
n! \sum_{\substack{\ell_1, \ell_2 \geq 0\\ \ell_1 + \ell_2 = n}} \big[
(\ell_1 + 1) X_{\alpha, n+2}[ \underbrace{h \otimes \cdots \otimes h}_{\ell_1 + 1 \text{ times}} \otimes\, a \otimes \underbrace{h \otimes \cdots \otimes h}_{\ell_2 \text{ times}}]
+ (\ell_2 + 1) X_{\alpha, n+2}[ \underbrace{h \otimes \cdots \otimes h}_{\ell_1 \text{ times}} \otimes \,a \otimes \underbrace{h \otimes \cdots \otimes h}_{\ell_2 + 1 \text{ times}}]
\big]
\\
&=
 n! \sum_{\substack{\ell_1 \geq 1, \ell_2 \geq 0\\ \ell_1 + \ell_2 = n+1}}
\ell_1 X_{\alpha, n+2}[ \underbrace{h \otimes \cdots \otimes h}_{\ell_1 \text{ times}} \otimes\, a \otimes \underbrace{h \otimes \cdots \otimes h}_{\ell_2 \text{ times}}]
 + n! \sum_{\substack{\ell_1 \geq 0, \ell_2 \geq 1\\ \ell_1 + \ell_2 = n+1}}
\ell_2 X_{\alpha, n+2}[ \underbrace{h \otimes \cdots \otimes h}_{\ell_1 \text{ times}} \otimes \,a \otimes \underbrace{h \otimes \cdots \otimes }_{\ell_2 \text{ times}}]
\end{align*}
after changes of summation parameters $\ell_1 + 1 \to \ell_1$ and $\ell_2 + 1 \to \ell_2$ in the two sums. \\
Then, extending the summation ranges with $\ell_1 = 0$ and $\ell_2 = 0$ since they do not contribute, one gets
\begin{align*}
X_{\alpha, 1}[a] &=
 n! \sum_{\substack{\ell_1, \ell_2 \geq 0\\ \ell_1 + \ell_2 = n+1}}
(\ell_1 + \ell_2) X_{\alpha, n+2}[ \underbrace{h \otimes \cdots \otimes h}_{\ell_1 \text{ times}} \otimes \,a \otimes \underbrace{h \otimes \cdots \otimes h}_{\ell_2 \text{ times}}]
\\
&= (n+1)! \sum_{\substack{\ell_1, \ell_2 \geq 0\\ \ell_1 + \ell_2 = n+1}}
X_{\alpha, n+2}[ \underbrace{h \otimes \cdots \otimes h}_{\ell_1 \text{ times}} \otimes \,a \otimes \underbrace{h \otimes \cdots \otimes h}_{\ell_2 \text{ times}}],
\end{align*}
and \eqref{eq:Xalpha1} holds true. The relation \eqref{eq:Xalpha1} is proved similarly.
\end{proof}

We will also use the notion of {\it expansion}: for $ k \in \bbN^*,\,\calE_k:\,\calA^{\otimes ^k} \to\calA^{\otimes ^{k+1}}$ is defined for $b_\ell\in \calA$ by
\begin{equation}
\label{def-expansion}
\calE_k[b_1\otimes \cdots \otimes b_k] \vc h \otimes b_1 \otimes \cdots \otimes b_k +b_1 \otimes h \otimes \cdots \otimes b_k+ \cdots 
+ b_1 \otimes \cdots \otimes b_k \otimes h,
\end{equation}
and the previous lemma can be read as:
\begin{equation}
\label{eq-reduction}
X_{\alpha+1,k+1} \circ \calE_k=X_{\alpha,k}, \quad \forall k \in \bbN^*
\end{equation}
or seen as {\it a reduction process} after the expansion $\calE_k$.

\begin{lemma}
Assume $\alpha \neq 1$.\\
i) For any $b\in \calA$,
\begin{align}
X_{\alpha,1}\big[ [b,h]\big]= \tfrac{1}{1-\alpha}  \,[b,h^{1-\alpha}].   \label{eq-X with commutator3}
\end{align}
ii) For any $2\leq k,\ell \in \bbN$, $ \ell<k$, and $b_i\in \calA$, 
\begin{align}
\label{eq-X with commutator1}
X_{\alpha,k}\big[b_1\otimes \cdots\otimes b_{\ell-1}\otimes [b_\ell,h]\otimes b_{\ell+1}\otimes  \cdots \otimes b_k\big]
&=
\begin{multlined}[t]
\tfrac{1}{1-\alpha} \big(+X_{\alpha-1,k-1}[b_1\otimes\cdots\otimes b_{\ell-2}\otimes b_{\ell-1}b_\ell\otimes b_{\ell+1}\otimes \cdots \otimes b_k]
\\
- X_{\alpha-1,k-1}[b_1\otimes \cdots \otimes b_{\ell-1}\otimes b_\ell b_{\ell+1} \otimes \otimes b_{\ell+2}\cdots \otimes b_k] \big).
\end{multlined}
\end{align}
iii) For any $k \geq 2$ and any $b_i \in \calA$,
\begin{align}
& X_{\alpha,k}\big[b_1\otimes \cdots\otimes b_{k-1}\otimes [b_k,h]\big] =    
\tfrac{1}{1-\alpha} \big(+X_{\alpha-1,k-1}[b_1\otimes\cdots\otimes b_{k-2}\otimes b_{k-1}b_k]  - X_{\alpha-1,k-1}[b_1\otimes \cdots \otimes b_{k-1}] \,b_{k}\big).  \label{eq-X with commutator2}
\end{align}
\end{lemma}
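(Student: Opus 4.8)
The plan is to reduce all three identities to the recursion formula \eqref{eq-recursive-I-ell} of Lemma~\ref{lem-relations-Ialpha}\,ii) via the definition $X_{\alpha,k}=\pi_{I_{\alpha,k}}$ and the spectral identity \eqref{eq-def Pi f 1}. The point is that inserting a commutator $[b_\ell,h]$ as the $\ell$-th argument splits, by $E_{\ell-1}b_\ell h E_\ell - E_{\ell-1}h b_\ell E_\ell = r_\ell\,E_{\ell-1}b_\ell E_\ell - r_{\ell-1}\,E_{\ell-1}b_\ell E_\ell = (r_\ell-r_{\ell-1})\,E_{\ell-1}b_\ell E_\ell$, so that
\begin{align*}
X_{\alpha,k}\big[\cdots\otimes[b_\ell,h]\otimes\cdots\big]
= I_{\alpha,k}(r_0,\dots,r_k)\,(r_\ell-r_{\ell-1})\;E_0 b_1 E_1\cdots E_{\ell-1}b_\ell E_\ell\cdots b_k E_k,
\end{align*}
with the usual implicit summation over spectral values. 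Comparing with \eqref{eq-recursive-I-ell}, which gives $(1-\alpha)(r_\ell-r_{\ell-1})\,I_{\alpha,k}(r_0,\dots,r_k)=I_{\alpha-1,k-1}(\dots,\widehat{r_{\ell-1}},\dots)-I_{\alpha-1,k-1}(\dots,\widehat{r_\ell},\dots)$, the whole task becomes a bookkeeping of which tensor slot the deleted spectral value corresponds to.

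For part ii), $2\le\ell<k$: after the split above I would substitute \eqref{eq-recursive-I-ell}. The term with $I_{\alpha-1,k-1}(r_0,\dots,\widehat{r_{\ell-1}},\dots,r_k)$ pairs with $E_0 b_1 E_1\cdots E_{\ell-2}(b_\ell) E_\ell\cdots$, but since $r_{\ell-1}$ has been removed the projection $E_{\ell-1}$ collapses: $E_{\ell-2}b_{\ell-1}E_{\ell-1}b_\ell E_\ell$ must be read with $E_{\ell-1}$ suppressed, i.e.\ as $E_{\ell-2}(b_{\ell-1}b_\ell)E_\ell$ after relabelling the spectral indices, which is exactly $X_{\alpha-1,k-1}[b_1\otimes\cdots\otimes b_{\ell-1}b_\ell\otimes b_{\ell+1}\otimes\cdots\otimes b_k]$ up to the factor $\tfrac{1}{1-\alpha}$; similarly the $I_{\alpha-1,k-1}(\dots,\widehat{r_\ell},\dots)$ term merges $b_\ell$ with $b_{\ell+1}$ and produces $-\tfrac{1}{1-\alpha}X_{\alpha-1,k-1}[b_1\otimes\cdots\otimes b_{\ell-1}\otimes b_\ell b_{\ell+1}\otimes\cdots\otimes b_k]$. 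Care with index shifts ($r_i\to r_{i-1}$ for $i\ge\ell$ or $i>\ell$) is the only subtlety. Part iii) is the $\ell=k$ boundary case of the same computation: now removing $r_k$ leaves the last argument $b_k$ sitting against the rightmost (deleted) projection, so the second term reads $X_{\alpha-1,k-1}[b_1\otimes\cdots\otimes b_{k-1}]\,b_k$ by the factorization \eqref{eq-X with au} (with $c=b_k$ no longer trapped between two projections), while removing $r_{k-1}$ merges $b_{k-1}$ and $b_k$ as before. Part i) is the case $k=1$: here $[b,h]$ is the single argument, $X_{\alpha,1}[[b,h]]=I_{\alpha,1}(r_0,r_1)(r_1-r_0)E_0 b E_1$, and \eqref{eq-recursive-I-ell} with $k=\ell=1$ gives $(1-\alpha)(r_1-r_0)I_{\alpha,1}(r_0,r_1)=I_{\alpha-1,0}(r_0)-I_{\alpha-1,0}(r_1)=r_0^{1-\alpha}-r_1^{1-\alpha}$, so $X_{\alpha,1}[[b,h]]=\tfrac{1}{1-\alpha}(r_0^{1-\alpha}-r_1^{1-\alpha})E_0 b E_1=\tfrac{1}{1-\alpha}(h^{1-\alpha}b-b\,h^{1-\alpha})$ after performing the spectral sum; one then recognizes this as $\tfrac{1}{1-\alpha}[h^{1-\alpha},b]$, which is $-\tfrac{1}{1-\alpha}[b,h^{1-\alpha}]$. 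I should double-check the sign convention in \eqref{eq-X with commutator3}: with $[b,h]=bh-hb$ the split gives $(r_\ell-r_{\ell-1})=(r_1-r_0)$, so one actually gets $+\tfrac{1}{1-\alpha}[h^{1-\alpha},b]=-\tfrac{1}{1-\alpha}[b,h^{1-\alpha}]$; if the stated sign is $+$, then either the paper's $[b,h]$ means $hb-bh$ here or there is a sign to reconcile — in any case it is a trivial orientation check.

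The main obstacle is purely notational: keeping straight, across the deletion of one spectral variable and the accompanying reindexing of tensor slots, that "suppressing $E_{\ell-1}$" genuinely equals "multiplying the two neighbouring matrices together", and that the argument positions line up with the hatted variables in \eqref{eq-recursive-I-ell}. There is no analytic content beyond Lemma~\ref{lem-relations-Ialpha}; once the dictionary slot $\ell\leftrightarrow$ spectral value $r_\ell$ is fixed, all three identities fall out by inspection, with i) and iii) being the $k=1$ and $\ell=k$ degenerations of ii) where a projection at an end of the chain is absorbed by \eqref{eq-def I k=0} or \eqref{eq-X with au} respectively.
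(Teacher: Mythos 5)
Your overall strategy is exactly the paper's: insert the commutator, use $E_{\ell-1}[b_\ell,h]E_\ell=(r_\ell-r_{\ell-1})E_{\ell-1}b_\ell E_\ell$, and feed the factor $(r_\ell-r_{\ell-1})$ into the recursion \eqref{eq-recursive-I-ell}, after which the deleted spectral variable turns the corresponding projection into $\bbbone$ and merges the two neighbouring arguments. Parts ii) and iii) are sketched correctly, including the signs.

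Part i), however, contains a genuine error that you flag but do not resolve. In \eqref{eq-recursive-I-ell} the hat means the variable is \emph{deleted}, so for $k=\ell=1$ the term $I_{\alpha-1,0}(r_0,\ldots,\widehat{r_{\ell-1}},\ldots)$ is $I_{\alpha-1,0}(r_1)=r_1^{1-\alpha}$, not $I_{\alpha-1,0}(r_0)$; you swapped the two terms, which flips the sign. The correct evaluation, $(1-\alpha)(r_1-r_0)\,I_{\alpha,1}(r_0,r_1)=r_1^{1-\alpha}-r_0^{1-\alpha}$ (consistent with \eqref{eq def I1}), gives $X_{\alpha,1}\big[[b,h]\big]=\tfrac{1}{1-\alpha}(bh^{1-\alpha}-h^{1-\alpha}b)=\tfrac{1}{1-\alpha}[b,h^{1-\alpha}]$, which is exactly \eqref{eq-X with commutator3}. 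The commutator is the standard $[b,h]=bh-hb$ throughout (the paper's own computation writes $E_0(bh-hb)E_1$), so the discrepancy is not a convention issue; it is this misreading of the hatted variable, and it is moreover inconsistent with your own correct bookkeeping in part ii), where you attach the $+$ sign of the $\widehat{r_{\ell-1}}$ term to the merge $b_{\ell-1}b_\ell$. A minor further point: in iii) the factorization of $b_k$ on the right does not come from \eqref{eq-X with au} (which requires $[b_k,h]=0$); it is simply that the sum over $r_k$ replaces $E_k$ by $\bbbone$, leaving $b_k$ outside all projections.
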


\begin{proof}
i) Thanks to the definition \eqref{eq-def-Xalphak}, \eqref{eq-recursive-I-ell} and \eqref{eq def I1}, we have 
\begin{align*}
X_{\alpha,1}\big[ [b,h]\big] &=  I_{\alpha,1}(r_0,r_1)\,E_0(bh-hb)E_1=(r_1-r_0)\,\tfrac{r_1^{1-\alpha}\,-\,r_0^{1-\alpha}}{(1-\alpha)(r_1\,-\,r_0)}\,E_0bE_1=\tfrac{1}{1-\alpha}(bh^{1-\alpha}-h^{1-\alpha}b)
\end{align*}
because the implicit summation over $r_0$ (resp. $r_1$) of $E_0$ (resp. $E_1$) gives $\bbbone$.

ii) The LHS of \eqref{eq-X with commutator1} is equal to
\begin{align*}
& ( r_\ell-r_{\ell-1})\, I_{\alpha,k}(r_0,\dots, r_k) \,E_0b_1\cdots E_{\ell-2} b_{\ell-1}E_{\ell-1}b_\ell E_{\ell}\cdots E_{k-1}b_kEk \\
&\hspace{0.5cm}= \tfrac{1}{1 - \alpha} \big[
I_{\alpha-1,k-1}(r_0, \ldots, \widehat{r_{\ell - 1}}, \ldots, r_k)\,E_0b_1\cdots  b_{\ell-1}E_{\ell-1}b_\ell E_{\ell}b_{\ell+1}\cdots E_{k-1}b_kEk\\
&\hspace{2cm}
-I_{\alpha-1,k-1}(r_0, \ldots, \widehat{r_{\ell}}, \ldots, r_k)\,E_0b_1\cdots b_{\ell-1}E_{\ell-1}b_\ell E_{\ell}b_{\ell+1}\cdots E_{k-1}b_kEk
\big] \\
& \hspace{0.5cm}=\tfrac{1}{1 - \alpha} \big(X_{\alpha-1,k-1}[b_1\otimes\cdots\otimes b_{\ell-2}\otimes b_{\ell-1}b_\ell\otimes b_{\ell+1}\otimes \cdots \otimes b_k]
\\
& \hspace{2cm}- X_{\alpha-1,k-1}[b_1\otimes \cdots \otimes b_{\ell-1}\otimes b_\ell b_{\ell+1} \otimes \otimes b_{\ell+2}\cdots \otimes b_k] \big)
\end{align*}
because the missing summations in $r_{\ell-1}$ and $r_{\ell}$ implies that $E_{\ell-1}$ and $E_\ell$ are replaced by $\bbbone$. \\
iii) Similarly, the LHS of \eqref{eq-X with commutator2} is equal to
\begin{align*}
&\!\! (r_k-r_{k-1})\, I_{\alpha,k}(r_0,\dots, r_k) \,E_0b_1\cdots  b_{k-1}E_{k-1}b_k Ek 
\\
&\hspace{0.5cm}= \tfrac{1}{1 - \alpha} \big[
I_{\alpha-1,k-1}(r_0, \ldots, \widehat{r_{k- 1}}, \ldots, r_k)\,E_0b_1\cdots  b_{k-1}E_{k-1}b_kE_k
-I_{\alpha-1,k-1}(r_0, \ldots, \widehat{r_k}, \ldots, r_k)\,E_0b_1\cdots b_{k-1} E_{k-1}b_kE_k
\big] \\
& \hspace{0.5cm}=\tfrac{1}{1 - \alpha} \big(X_{\alpha-1,k-1}[b_1\otimes\cdots\otimes b_{k-1}b_k] 
- X_{\alpha-1,k-1}[b_1\otimes \cdots \otimes b_{k-1}]b_k \big).
\end{align*}
\end{proof}

As a consequence of the previous lemma, we get the following:
\begin{corollary}
\label{cor-(inverse h)ah}
For any $\alpha \neq 1,\,2\leq k,\ell \in \bbN$, $\ell<k$, and $b_i\in \calA$,
\begin{align*}
\begin{multlined}[t]
X_{\alpha,k}[b_1\otimes \cdots\otimes b_{\ell-2}\otimes h\otimes h^{-1}a h\otimes h\otimes b_{\ell+2}\otimes \cdots \otimes b_k]
-X_{\alpha,k}[b_1\otimes \cdots\otimes b_{\ell-2}\otimes h\otimes a\otimes h\otimes b_{\ell+2}\otimes \cdots \otimes b_k]
\\
= \tfrac{1}{1-\alpha} \big(X_{\alpha-1,k-1}[b_1\otimes\cdots\otimes b_{\ell-2}\otimes a\otimes h\otimes b_{\ell+2}\otimes\cdots \otimes b_k] 
- X_{\alpha-1,k-1}[b_1\otimes \cdots \otimes b_{\ell-2}\otimes h\otimes h^{-1}ah \otimes b_{\ell+2}\cdots \otimes b_k] \big).
\end{multlined}
\end{align*}
\end{corollary}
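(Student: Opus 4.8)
The plan is to obtain the statement as an immediate specialization of part~ii) of the preceding lemma, namely equation~\eqref{eq-X with commutator1}, through a single well-chosen substitution; the hypotheses on $\alpha$, $k$ and $\ell$ are literally the same in the two statements, so nothing needs to be checked there.

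The only algebraic input required is the observation that, since $h$ is invertible, $[h^{-1}a,h]=(h^{-1}a)h-h(h^{-1}a)=h^{-1}ah-a$. With this in hand I would apply \eqref{eq-X with commutator1} with $b_{\ell-1}$ replaced by $h$, $b_\ell$ replaced by $h^{-1}a$ and $b_{\ell+1}$ replaced by $h$, leaving all the other $b_i$ untouched. The left-hand side $X_{\alpha,k}[\cdots\otimes b_{\ell-1}\otimes[b_\ell,h]\otimes b_{\ell+1}\otimes\cdots]$ then becomes $X_{\alpha,k}[\cdots\otimes h\otimes(h^{-1}ah-a)\otimes h\otimes\cdots]$, and by linearity of $X_{\alpha,k}$ in its $\ell$-th tensor slot this is exactly the difference appearing on the left-hand side of the corollary.

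On the right-hand side of \eqref{eq-X with commutator1} the two $X_{\alpha-1,k-1}$ terms involve the products $b_{\ell-1}b_\ell$ and $b_\ell b_{\ell+1}$; with the above choices these are $h\,h^{-1}a=a$ and $h^{-1}a\,h=h^{-1}ah$ respectively. Substituting them reproduces verbatim the right-hand side of the corollary, so both sides coincide.

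No genuine obstacle is expected; the one point that deserves attention is the index bookkeeping, i.e.\ checking that slots $\ell-1,\ell,\ell+1$ in \eqref{eq-X with commutator1} carry the factors $h$, $h^{-1}a$, $h$ and that the surrounding arguments line up with $b_1,\dots,b_{\ell-2}$ and $b_{\ell+2},\dots,b_k$ (the degenerate case $\ell=2$, in which the block $b_1\otimes\cdots\otimes b_{\ell-2}$ is empty, is harmless). Should one prefer not to invoke \eqref{eq-X with commutator1}, the same identity can be obtained directly from \eqref{eq-def-Xalphak} and the spectral recursion \eqref{eq-recursive-I-ell}, using the functional calculus relations $h^{-1}E_r=r^{-1}E_r$ and $hE_r=rE_r$ to rewrite the sandwiched factor $E_{\ell-1}(h^{-1}ah-a)E_\ell$ as $\tfrac{r_\ell-r_{\ell-1}}{r_{\ell-1}}\,E_{\ell-1}aE_\ell$ before applying the recursion; but this route is longer and not needed.
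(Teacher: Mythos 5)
Your proof is correct and is exactly the argument the paper intends: the paper states the corollary as an immediate consequence of the preceding lemma, and the intended specialization is precisely your substitution $b_{\ell-1}=h$, $b_\ell=h^{-1}a$, $b_{\ell+1}=h$ into \eqref{eq-X with commutator1} together with $[h^{-1}a,h]=h^{-1}ah-a$ and linearity in the $\ell$-th slot. The index bookkeeping you flag checks out, so nothing further is needed.
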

For instance, using also \eqref{eq-X with au} and \eqref{eq-X with ua},
\begin{align*}
X_{\alpha,3}[h\otimes h^{-1}a h\otimes h]-X_{\alpha,3}[h\otimes a\otimes h]
& =\tfrac{1}{1-\alpha}\, (X_{\alpha-1,2}[a \otimes h] - X_{\alpha-1,2}[ h \otimes h^{-1}a h])
\\
& 
=\tfrac{1}{1-\alpha} \,(X_{\alpha-1,2} [a \otimes h] - h^{-1} X_{\alpha-1,2}[h \otimes a]\, h).
\end{align*}
For a potential use of this corollary, see Remark~\ref{rem-N zero or not}.

\subsection{\texorpdfstring{If $h$ commutes with the $b_\ell$'s}{If h commutes with the b }}

When $h$ commutes with the arguments acted upon by the operator $X_{\alpha,k}$, the action of the latter is quite simple:

\begin{lemma}
Let $\alpha \in \bbR$, $k\in \bbN$ and $b_\ell \in \calA$ with $1\leq \ell \leq k$. If  $[h,b_\ell]=0$ for any $\ell$, then
\begin{align}
\label{eq-Xk for u diagonal}
X_{\alpha,k}[b_1 \otimes \cdots\otimes b_k] =\tfrac{1}{k!} \,h^{-\alpha} b_1\cdots b_k.
\end{align}
\end{lemma}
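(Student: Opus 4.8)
The plan is to reduce everything to the scalar identity already established for the universal spectral functions, namely formula~\eqref{eq-I alpha k (r0, ...,r0)}: $I_{\alpha,k}(r_0,\dots,r_0)=\tfrac{1}{k!}r_0^{-\alpha}$. The point is that the commutation hypothesis $[h,b_\ell]=0$ forces every spectral projection $E_j$ of $h$ occurring in the defining expression $X_{\alpha,k}[b_1\otimes\cdots\otimes b_k]=I_{\alpha,k}(r_0,\dots,r_k)\,E_0 b_1 E_1 b_2\cdots E_{k-1}b_k E_k$ to collapse onto a single common spectral value, so that only the diagonal values $I_{\alpha,k}(r_0,\dots,r_0)$ survive the implicit summation.

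First I would write out $X_{\alpha,k}[b_1\otimes\cdots\otimes b_k]=\pi_{I_{\alpha,k}}[b_1\otimes\cdots\otimes b_k]$ using \eqref{eq-def Pi f 1}, i.e. as the (implicitly summed) expression $\sum I_{\alpha,k}(r_0,\dots,r_k)\,E_0 b_1 E_1\cdots E_{k-1}b_k E_k$, where each $E_j=E_{r_j}$ is the spectral projection of $h$ at the eigenvalue $r_j$. Next I would use that $b_\ell$ commutes with $h$, hence with every spectral projection $E_{r}$ of $h$: therefore $E_{j-1}b_j E_j=b_j E_{j-1}E_j=\delta_{r_{j-1},r_j}\,b_j E_{j-1}$ for each $j=1,\dots,k$. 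Applying this for $j=1$ then $j=2$, and so on, all the eigenvalues are identified, $r_0=r_1=\cdots=r_k$, and the string telescopes to $E_0 b_1 b_2\cdots b_k$ (all the projections between the $b_j$'s disappear because $E_{r_0}E_{r_0}=E_{r_0}$). So the sum reduces to $\sum_{r_0} I_{\alpha,k}(r_0,\dots,r_0)\,E_{r_0}\,b_1\cdots b_k$.

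Finally I would substitute the explicit diagonal value $I_{\alpha,k}(r_0,\dots,r_0)=\tfrac{1}{k!}r_0^{-\alpha}$ from \eqref{eq-I alpha k (r0, ...,r0)}, giving $\tfrac{1}{k!}\sum_{r_0} r_0^{-\alpha}E_{r_0}\,b_1\cdots b_k=\tfrac{1}{k!}\,h^{-\alpha}b_1\cdots b_k$ by the functional calculus for $h$ (the sum over spectral values of $r_0^{-\alpha}E_{r_0}$ is exactly $h^{-\alpha}$). The case $k=0$ is covered separately by \eqref{eq-X on only h} / \eqref{eq-def I k=0}, so the statement holds for all $k\in\bbN$. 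There is really no serious obstacle here: the only thing to be careful about is the bookkeeping of the implicit summation convention for $\pi_f$, making sure the Kronecker deltas $\delta_{r_{j-1},r_j}$ are applied consistently so that a single sum over $r_0$ remains; the computation is otherwise immediate once one notes that $b_\ell$ commuting with $h$ means $b_\ell$ commutes with every spectral projection of $h$.
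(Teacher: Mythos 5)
Your proof is correct and follows essentially the same route as the paper: both write $X_{\alpha,k}[b_1\otimes\cdots\otimes b_k]=I_{\alpha,k}(r_0,\dots,r_k)\,E_0 b_1 E_1\cdots b_k E_k$, use the commutation of the $b_\ell$ with the spectral projections of $h$ to collapse the spectral values to a single $r_0$, and conclude via $I_{\alpha,k}(r_0,\dots,r_0)=\tfrac{1}{k!}r_0^{-\alpha}$ and functional calculus. You merely spell out the Kronecker-delta bookkeeping that the paper leaves implicit.
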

This is for instance the case either when $h=\bbbone$ or when $h$ and the $b_\ell$'s are diagonal matrices.

\begin{proof}
We have, using the equality \eqref{eq-I alpha k (r0, ...,r0)},
\begin{align*}
X_{\alpha,k}[b_1 \otimes \cdots\otimes b_k] &=I_{\alpha,k}(r_0,\dots, r_k) \,E_0b_1E_1 \cdots b_k E_k=I_{\alpha,k}(r_0,\dots,r_0)\, E_0b_1\cdots b_k  =\tfrac{1}{k!} \,h^{-\alpha} b_1\cdots b_k.
\end{align*}
\end{proof}
This shows that in this situation the operators $X_{\alpha,k}$ act as a polynomial in $h$ and the $b_\ell$'s.

\subsection{Action of a derivation and finite differences}

It is immediate to extend all previous definitions and results to the algebra that we consider from now on, namely
\begin{equation*}
\calA \vc C^\infty(\calU,M_N(\bbC)),
\end{equation*}
where $\calU$ is a parameter space. Later on, when $P$ will play a role, $h$ will be either $u(x)$ with $\calU=U$ (an open set in $M$) or $H(x,\xi)=g^{\mu\nu}(x)u(x)\xi_\mu\xi_\nu$ and $\calU=U\times \bbR^d$. This extension is necessary because we have derivations in the play and consequently, the operators $X_{\alpha,k}$ now depend on $x\in M$. With the definitions
\begin{align}
&f_{k,\ell}(s;r) \vc e^{-(s_\ell-s_{\ell+1})\, r}, \label{eq-fkl}\\
&\widetilde{I}_k(r_0,\dots,r_k) \vc \int_{\Delta_k} \dd s\, f_{k,0}(s;r_0)\cdots f_{k,k}(s;r_k),\label{I tilde}
\end{align}
where $s\in \Delta_k,\, r\in \bbR_+^*$, we can rewrite the functions $I_{\alpha,k}$ as
\begin{align}
&I_{\alpha,k}(r_0, \ldots, r_k)
=\Gamma(\alpha)^{-1} \, \int_{\Delta_k} \!\! \dd s\, \int_0^\infty \! \dd t \, t^{\alpha - 1} e^{-t\, [\sum_{\ell=0}^k \,(s_\ell-s_{\ell+1})\,r_\ell ]}
= \Gamma(\alpha)^{-1} \, \int_0^\infty \!\dd t \, t^{\alpha - 1}\, \widetilde{I}_k(tr_0,\dots,tr_k) \label{eq- I Laplace}
\end{align}
and the operator $f_k(\xi)$ defined in \eqref{eq-fk-def} and restricted to $\calA^{\otimes^k}$ (\textsl{i.e.} to arguments without derivatives),  is associated for $h=H(x,\xi)$ to the spectral function $\widetilde{I}_k$:
\begin{align}
\label{eq-fk as a pi}
f_k(\xi) = \pi_{\widetilde{I}_k}.
\end{align}

\smallskip
Let $\partial$ be an arbitrary derivation of the algebra $\calA$, namely a linear combination of a derivative of a $\calA$-valued function along a parameter $y\in \mathcal{U}$ and a commutator with an element of $\calA$. Then
\begin{align}
\label{eq- partial of exp}
\partial e^{-t \,h} = - \int_0^t \dd s \, e^{(s-t)\,h} \, (\partial h) \, e^{-s\,h}.
\end{align}
Recall that a proof for $\partial=\partial_y$ is based on the following relation: If $E_\epsilon(s) \vc e^{(st-t)\,h(y+\epsilon)}\,e^{-st\,h(y)}$ for $s\in [0,1]$ and $\epsilon \in \bbR$, then 
\begin{align*}
e^{-t\,h(y+\epsilon)} - e^{-t\,h(y)}=-\int_0^1 \dd s\,E_\epsilon'(s) = -\int_0^1 t\dd s\, e^{(st-t)\,h(y+\epsilon)}\,\big(h(y+\epsilon)-h(y)\big)\,e^{-st\,h(y)},
\end{align*}
see also \cite{Avra15a}.
For an inner derivation like $\partial=[\cdot,\,a]$ where $a\in \calA$, a similar argument can be applied if one begins with $E(s)\vc e^{(st-t)\,h}\,a\,e^{-st\,h}$.\\
By functional calculus on $h=r_0E_0$, we deduce from \eqref{eq- partial of exp} 
\begin{align}
\label{eq-exp in C}
\partial e^{-t \,h} 
= - \int_0^t \dd s \,e^{(s-t)\,r_0} e^{-s \,r_1} \,E_0 (\partial h) E_1 
= \tfrac{ e^{-t \,r_1}\, -\, e^{-t\, r_0}}{r_1 - r_0}\, E_0 (\partial h) E_1 ,
\end{align}
(still an implicit summation over repeated indices) where from now on we use the symbolic notation $\tfrac{f(r_1)\, - f(r_0)}{r_1 - r_0}$ of finite differences instead of $ f'(r_0)$ when $r_0=r_1$. \\
Remark that the peculiar case $\partial_{\xi_\mu} e^{-t H(x,\xi)}= -t g^{\mu\nu}\xi_\nu \,u(x)\,e^{-t H(x,\xi)}$ is compatible with \eqref{eq-exp in C}.

The relation \eqref{eq-exp in C} can be extended in the following way. Let $g$ be a Laplace transform of a Borel signed $\bbR$-valued measure $\phi$ on $\bbR_+$, \textit{i.e.} $g: \, r\in \bbR_+^* \to \int_0^\infty \dd\phi( t)\,e^{-tr}\in \bbR$. We consider the derivability of $g$ at the point $r_0\in ]r_m,r_M[$: since for any $r\in]r_m,r_M[$ and $0<\epsilon<r_m$, $\vert \partial_r (te^{-tr})\vert<\epsilon^{-1}  e^{t\epsilon}e^{-tr_m}$ and $\int_0^\infty \dd \phi(t)\,e^{-t(r_m-\epsilon)}=g(r_m-\epsilon)<\infty$, we may use the differentiation lemma for parameter dependent integrals to commute $\partial_r$ and the integral (recall also that by Bernstein's theorem, $g$ is completely monotonic, see \cite[Theorem 1.4]{Schilling}). This commutation property can be first extended when $g$ is a Laplace transform of a signed $\bbC$-valued measure $\phi$ on $\bbR_+$ and then extended again to $f=g \circ r$ for any smooth function $r: y\in \calU \to \bbR_+^*$: consider again the derivability of $f$ in an open ball $B$ around $y_0$ such that $r(y_0) \in r(B)\subset ]r_m,r_M[$ where $r_m=\inf_{y\in \overline{B}}\,r(y)$ and $r_M=\sup_{y\in \overline{B}}\,r(y)$; as before we get $\vert \partial_y e^{-tr(y)} \vert =\vert r'(y) \vert\,te^{-tr(y)}<\sup_{y\in \overline{B}} \vert r'(y) \vert \,\epsilon^{-1} e^{-t(r_m-\epsilon)}$ so that we can commute $\partial_y$ with the integral. In particular, for such $f$, 
\begin{align}
\label{eq-partial(f(h))}
\partial [f(h)]=\int_0^\infty \dd\phi(t)\,\partial (e^{-t\, r_0E_0}) = \tfrac{1}{r_1 - r_0} \,[\int_0^\infty \dd\phi(t)\, (e^{-t r_1} - e^{-t r_0} )] \,E_0 (\partial h) E_1= \tfrac{f(r_1)\, -\, f(r_0)}{r_1\, -\, r_0}\, E_0 (\partial h) E_1.
\end{align}
It is then natural to define the set of functions
\begin{align}
\label{eq-partial-1}
\calC \vc \{ f \in C^\infty(\bbR_+^*,\bbC)\,\,\vert\,\, \partial[f(h)]=\tfrac{f(r_1) \,-\, f(r_0)}{r_1\, -\, r_0}\, E_0 (\partial h) E_1\}
\end{align}
which is large because $\calC$ contains all functions of type $g\circ r$ with $g,\, r$ defined as above. \\
Since $r^{-\alpha} =  \tfrac{1}{ \Gamma(\alpha)}\int_0^\infty  \,\dd t\,t^{\alpha - 1}\,e^{-t r}$ for $\alpha>0$, the definition \eqref{def I0} shows in particular that $I_{\alpha,0} \in \mathcal{C}$ if $\alpha >0$ and since $I_{0,0} = 1$,
\begin{align}
\label{eq-I0 in C0}
I_{\alpha,0} \in \mathcal{C}, \quad \forall \alpha \geq 0.
\end{align}
With the help of \eqref{eq- I Laplace}, the functions
\begin{align}
\label{eq-I in C}
I_{\alpha,k}(r_0, \ldots, r_k)
=\Gamma(\alpha)^{-1} \, \int_{\Delta_k} \dd s\, \int_0^\infty \dd t \, t^{\alpha - 1} f_{k,0}(s;t r_0)\cdots f_{k,k}(s;tr_k)
\end{align}
for $\alpha>0$ are nothing else but integrals of products of elements in $\calC$.

Then, let us introduce the following generalization of the finite difference appearing in the RHS of \eqref{eq-partial(f(h))} to functions of several variables: for $f\in C^{\infty}((\bbR_{+}^*)^{\,k+1}, \bbC )$ and $k ,\,\ell \in \bbN$ with $0\leq \ell \leq k$, define the “partial” finite differences
\begin{align}
\label{def-Delta f}
(\Delta^{(\ell)} f)(r_0, \dots, r_{k+1}) \vc 
\begin{dcases}
 \tfrac{1}{r_{\ell+1} - r_\ell}\,\big(f(r_0, \dots, \widehat{r_\ell}, \dots, r_{k+1}) - f(r_0, \dots, \widehat{r_{\ell+1}}, \dots, r_{k+1})\big), &\text{ when }r_\ell  \neq r_{\ell+1},\\
\partial_{r_\ell}f(r_0,\cdots,r_\ell,r_\ell,\cdots,r_k),& \text{ when }r_\ell =r_{\ell+1}.
\end{dcases}
\end{align}
We can generalize \eqref{eq-partial(f(h))} to operators $\pi_f$ by defining:
\begin{align}
\label{def-Delta pi}
\Delta^{(\ell)}_{\partial} \pi_f \vc i_{\partial h}^{(\ell+1)} \,\pi_{\Delta^{(\ell)} f}, 
\end{align}
or more explicitly, $\Delta^{(\ell)}_{\partial} \pi_f = (\Delta^{(\ell)} f)(r_0, \dots, r_{k+1})  \,\mul\circ (E_0^R \otimes \cdots \otimes [E_{\ell} (\partial h) E_{\ell+1}]^R \otimes \cdots \otimes E_{k+1}^R)$.
The next proposition shows that the families of $I_{\alpha,k}$ and $X_{\alpha,k}$ are indeed invariant respectively by $\Delta^{(\ell)}$ and $\Delta^{(\ell)}_{\partial}$ modulo dilations and insertion of $\partial h$:

\begin{proposition}
For any $\alpha \neq 1$, any $k,\ell \in \bbN^*$ with $0 \leq \ell \leq k$, we have
\begin{align}
\label{eq-Delta-Ialphak}
&
\Delta^{(\ell)} I_{\alpha,k} = - \alpha\,I_{\alpha+1,k+1},
\\
\label{eq-Delta-Xalphak}
&
\Delta^{(\ell)}_{\partial}  X_{\alpha,k} = - \alpha \, i_{\partial h}^{(\ell+1)} \, X_{\alpha+1,k+1},
\\
\label{eq-Delta-tildeIalphak}
&
\Delta^{(\ell)} \widetilde{I}_k = - \widetilde{I}_{k+1},
\\
\label{eq-Delta-fkxi}
&
\Delta^{(\ell)}_{\partial}  f_k(\xi) = - i_{\partial h}^{(\ell+1)} \, f_{k+1}(\xi) \quad\text{with $h=H(x,\xi)$},
\end{align}
\end{proposition}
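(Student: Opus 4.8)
The plan is to prove the four identities in sequence, since each one either feeds into the next or follows by the same mechanism. First I would establish \eqref{eq-Delta-Ialphak}. Starting from the integral representation \eqref{eq- I Laplace}, $I_{\alpha,k}(r_0,\dots,r_k) = \Gamma(\alpha)^{-1}\int_0^\infty \dd t\, t^{\alpha-1}\,\widetilde{I}_k(tr_0,\dots,tr_k)$, I would apply the partial finite difference $\Delta^{(\ell)}$ to the $r$-variables. Because $\Delta^{(\ell)}$ acts linearly and the prefactor $\Gamma(\alpha)^{-1} t^{\alpha-1}$ is $r$-independent, it commutes with the $t$-integral, so the problem reduces to computing $\Delta^{(\ell)}[\widetilde{I}_k(t\,\cdot)]$. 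A direct comparison of \eqref{eq-recursive-I-ell} (which expresses $I_{\alpha,k}$ as a partial finite difference of $I_{\alpha-1,k-1}$) with the definition \eqref{def-Delta f} already gives the shape: \eqref{eq-recursive-I-ell} says precisely $I_{\alpha,k} = \tfrac{1}{1-\alpha}\,\Delta^{(\ell-1)} I_{\alpha-1,k-1}$ after matching indices, equivalently $\Delta^{(\ell)} I_{\alpha,k} = (1-(\alpha+1))\, I_{\alpha+1,k+1}\cdot(-1)$; tidying the index shift and the $r_\ell = r_{\ell+1}$ degenerate case (where one uses the second branch of \eqref{def-Delta f} and L'Hôpital) yields $\Delta^{(\ell)} I_{\alpha,k} = -\alpha\, I_{\alpha+1,k+1}$. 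One must check independence of $\ell$, which is already implicit in \eqref{eq-recursive-I-ell} holding for all admissible $\ell$.

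Next, \eqref{eq-Delta-Xalphak} is a direct translation of \eqref{eq-Delta-Ialphak} through the dictionary $X_{\alpha,k} = \pi_{I_{\alpha,k}}$ of \eqref{eq-def-Xalphak} together with the definition \eqref{def-Delta pi} of $\Delta^{(\ell)}_\partial$ acting on operators $\pi_f$. Explicitly, $\Delta^{(\ell)}_\partial X_{\alpha,k} = \Delta^{(\ell)}_\partial \pi_{I_{\alpha,k}} = i_{\partial h}^{(\ell+1)}\,\pi_{\Delta^{(\ell)} I_{\alpha,k}} = i_{\partial h}^{(\ell+1)}\,\pi_{-\alpha I_{\alpha+1,k+1}} = -\alpha\, i_{\partial h}^{(\ell+1)}\, X_{\alpha+1,k+1}$, where the middle equality is exactly \eqref{def-Delta pi}, the third uses \eqref{eq-Delta-Ialphak}, and the last uses linearity of $\pi$ in its spectral-function argument and again \eqref{eq-def-Xalphak}. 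So this identity requires essentially no new work once \eqref{eq-Delta-Ialphak} is in hand.

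For \eqref{eq-Delta-tildeIalphak}, I would argue directly from the definition \eqref{I tilde}, $\widetilde{I}_k(r_0,\dots,r_k) = \int_{\Delta_k}\dd s\, \prod_{m=0}^k e^{-(s_m-s_{m+1})r_m}$. Fix $\ell$. Computing the finite difference in the pair $(r_\ell, r_{\ell+1})$: the factor depending on $r_\ell$ is $e^{-(s_\ell - s_{\ell+1})r_\ell}$ and on $r_{\ell+1}$ is $e^{-(s_{\ell+1}-s_{\ell+2})r_{\ell+1}}$. Deleting one variable and taking the difference quotient is the familiar computation that produces an extra $s$-integration: more concretely, the key elementary identity $\tfrac{e^{-bx}-e^{-ax}}{a-b} = \int \text{(over an interval of length encoding a new simplex variable)}\, e^{-\cdot}$ exactly reconstructs $\widetilde{I}_{k+1}$ with its $\Delta_{k+1}$ domain, up to the sign. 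This is structurally the same manipulation as in the proof of Lemma~\ref{lem-relations-Ialpha}(ii) and in the derivation of \eqref{eq-exp in C}; one can also obtain it cheaply by applying the already-known \eqref{eq-Delta-Ialphak} after the Mellin transform \eqref{eq- I Laplace} and then inverting — but the direct route is cleaner and also covers $\alpha$-free statements. Finally, \eqref{eq-Delta-fkxi} follows from \eqref{eq-Delta-tildeIalphak} by the same $\pi$-dictionary as before, now via \eqref{eq-fk as a pi}, i.e. $f_k(\xi) = \pi_{\widetilde{I}_k}$ with $h = H(x,\xi)$, so $\Delta^{(\ell)}_\partial f_k(\xi) = i_{\partial h}^{(\ell+1)}\pi_{\Delta^{(\ell)}\widetilde{I}_k} = -\,i_{\partial h}^{(\ell+1)}\,\pi_{\widetilde{I}_{k+1}} = -\,i_{\partial h}^{(\ell+1)}\, f_{k+1}(\xi)$.

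The main obstacle is the bookkeeping in \eqref{eq-Delta-tildeIalphak} (and equivalently the index-shift in \eqref{eq-Delta-Ialphak}): one must carefully track which simplex variable is introduced when passing from $\Delta_k$ to $\Delta_{k+1}$, handle the boundary cases $\ell = 0$ and $\ell = k$ separately (as was done in Lemma~\ref{lem-relations-Ialpha}), and treat the coincident-spectral-value case $r_\ell = r_{\ell+1}$ where the finite difference degenerates to a derivative — there one uses smoothness in the spectral parameters and continuity of both sides. Everything else is a mechanical application of the correspondence between spectral functions $f$ and operators $\pi_f$, which is linear and already set up in \eqref{def-Delta pi}.
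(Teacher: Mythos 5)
Your proof follows the paper's own route essentially verbatim: \eqref{eq-Delta-Ialphak} is obtained by applying \eqref{eq-recursive-I-ell} to $I_{\alpha+1,k+1}$ with $\ell+1$ in place of $\ell$, \eqref{eq-Delta-Xalphak} and \eqref{eq-Delta-fkxi} are the $\pi_f$-dictionary translations via \eqref{eq-def-Xalphak}, \eqref{def-Delta pi} and \eqref{eq-fk as a pi}, and \eqref{eq-Delta-tildeIalphak} is the same adaptation of the proof of Lemma~\ref{lem-relations-Ialpha}~ii). The only blemish is the garbled intermediate expression ``$\Delta^{(\ell)} I_{\alpha,k} = (1-(\alpha+1))\, I_{\alpha+1,k+1}\cdot(-1)$'', which as written has the wrong sign, but your final identities and the mechanism producing them are correct.
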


\begin{proof}
The first relation follows from \eqref{eq-recursive-I-ell} applied to $I_{\alpha+1,k+1}$ with $\ell +1$ instead of $\ell$ and the second one from the definition \eqref{eq-def-Xalphak}. The third one can be shown using a straightforward adaptation of the proof of  Lemma~\ref{lem-relations-Ialpha} ii) beginning with $\widetilde{I}_{k+1}(r_0, \dots, r_{k+1}) = \int_{\Delta_{k+1}} \!\dd s \, e^{- \sum_{\ell=0}^{k+1} s_\ell (r_\ell -r_{\ell-1}) }$. The last relation is a consequence of \eqref{eq-fk as a pi} and \eqref{eq-Delta-tildeIalphak}.
\end{proof}

\subsection{Propagation of derivations}

Let $\partial$ be a derivation acting on elements in $\calA$, for instance along a parameter in $\calU$. Suppose that we have a representation of the algebra $\calA$ on a vector space $\calH$ on which $\partial$ is also defined (with the same notations) in such a way that the Leibniz rule holds: $\partial (a v) = (\partial a) v + a (\partial v)$ for any $a \in \calA$ and $v \in \calH$.

\begin{proposition}
\label{prop-Propagation}
Assume that the function $f\in C^\infty((\bbR_+^*)^{\,k+1},\bbC)$ is either $f(r_0,\dots,r_k)=f_0(r_0)\cdots f_k(r_k)$ with $f_\ell \in \calC$, or $f=\widetilde{I}_k$, or $f=I_{\alpha,k}$ with $\alpha \geq 0$. Then, for any $\ell$ such that $1\leq \ell\leq k$ and any $b_\ell$ which is a $\calA$-valued differential operator in $\partial$, the following propagation rule holds true:
\begin{multline}
\pi_f[b_1 \otimes \cdots \otimes b_{\ell} \partial \otimes \cdots \otimes b_k]\, v 
 = \sum_{i=\ell+1}^{k} \pi_f[b_1 \otimes \cdots \otimes b_{i-1} \otimes (\partial b_i) \otimes \cdots \otimes b_k]\, v
+ \sum_{i=\ell}^{k} (\Delta^{(i)}_{\partial} \pi_f)[b_1 \otimes \cdots \otimes b_k] v  
\\
+ \pi_f[b_1 \otimes \cdots \otimes b_{\ell} \otimes \cdots \otimes b_k] (\partial v).  \label{eq-Ypartial-vector-dvlpt}
\end{multline}
\end{proposition}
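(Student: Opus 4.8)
The plan is to reduce everything to the single-step case $k=1$, $\ell=1$, and then propagate the derivation through the tensor slots one at a time using the Leibniz rule together with the finite-difference identities already established. First I would write $b_\ell\partial$ in slot $\ell$ as acting on the remaining arguments $b_{\ell+1},\dots,b_k$ and on the vector $v$; since each $b_i$ is an $\calA$-valued differential operator in $\partial$, by Remark~\ref{rmk fk convention} the composite $b_\ell\partial$ in slot $\ell$ literally means: apply $\partial$ to the product $b_{\ell+1}E_{\ell+1}\cdots b_kE_k\,v$ sitting to its right in the expression \eqref{eq-def Pi f 1}, after the spectral projections have been inserted. Concretely, $\pi_f[b_1\otimes\cdots\otimes b_\ell\partial\otimes\cdots\otimes b_k]\,v = f(r_0,\dots,r_k)\,E_0b_1E_1\cdots E_{\ell-1}b_\ell E_\ell\,\partial\big(b_{\ell+1}E_{\ell+1}\cdots b_kE_k\,v\big)$, where the $E_i$ for $i\ge\ell$ are \emph{functions of $h$} and hence are themselves differentiated by $\partial$.

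The key step is then to expand $\partial\big(b_{\ell+1}E_{\ell+1}\,b_{\ell+2}E_{\ell+2}\cdots b_kE_k\,v\big)$ by the Leibniz rule into a sum of $2(k-\ell)+1$ terms: one term where $\partial$ hits $b_{\ell+1}$, giving $(\partial b_{\ell+1})$; one where it hits $E_{\ell+1}$; one where it hits $b_{\ell+2}$; one where it hits $E_{\ell+2}$; and so on down to $(\partial b_k)$, $(\partial E_k)$, and finally $(\partial v)$. The terms hitting $b_i$ reassemble, after reinserting the projections, into $\pi_f[b_1\otimes\cdots\otimes(\partial b_i)\otimes\cdots\otimes b_k]\,v$ for $i=\ell+1,\dots,k$, which is the first sum on the RHS of \eqref{eq-Ypartial-vector-dvlpt}. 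The term hitting $v$ gives the last term $\pi_f[b_1\otimes\cdots\otimes b_k](\partial v)$. For the terms hitting $E_i$ (a function of $h$), I would invoke the hypothesis that $f$ is of one of the three listed forms, so that by \eqref{eq-I0 in C0}, \eqref{eq-I in C} and the defining property \eqref{eq-partial-1} of $\calC$ — more precisely by the computation \eqref{eq-partial(f(h))} applied slotwise — the derivative $\partial E_i$ of the $i$-th spectral projection, sitting between the matrices $b_i$ and $b_{i+1}$, produces exactly a finite-difference $\Delta^{(i)}$ acting on $f$ together with an insertion $i_{\partial h}^{(i+1)}$ of $\partial h$ at the new $(i+1)$-th slot; that is, it produces $(\Delta^{(i)}_\partial\pi_f)[b_1\otimes\cdots\otimes b_k]\,v$ by the very definition \eqref{def-Delta pi}. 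Summing over $i=\ell,\dots,k$ (the $i=\ell$ term coming from $\partial$ acting on $E_\ell$, which already sits to the right of $b_\ell\partial$, or equivalently from rewriting $\partial b_{\ell+1}E_{\ell+1}$ as $(\partial E_\ell\text{-contribution})$; I would be careful about this boundary index) gives the second sum.

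The one subtlety I expect to be the main obstacle is the bookkeeping at the two ends of the chain: making sure the finite-difference sum runs over exactly $i=\ell,\dots,k$ and not $i=\ell+1,\dots,k$ or $i=\ell,\dots,k-1$. The point is that $E_\ell$ is the projection immediately to the right of the symbol $b_\ell\partial$, so $\partial$ does act on it and contributes the $i=\ell$ term $\Delta^{(\ell)}_\partial\pi_f$; at the other end $E_k$ is the last projection, with nothing but the constant section $v$ to its right, so $\partial E_k$ contributes the $i=k$ term $\Delta^{(k)}_\partial\pi_f$ and then $\partial v$ is the final term. I would verify this carefully on the smallest cases $k=1$ and $k=2$ to fix the index conventions, and for the technical legitimacy of differentiating the spectral projections termwise under the Laplace-type integral representations of $\widetilde I_k$ and $I_{\alpha,k}$ I would simply cite the dominated-convergence argument already given around \eqref{eq-partial-1} and the observation \eqref{eq-pi integral} that the implicit spectral summation is finite, so that $\partial$ commutes with both that finite sum and the $t$- and $s$-integrals. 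Once the index ranges are pinned down, reassembling the three groups of terms gives precisely \eqref{eq-Ypartial-vector-dvlpt}.
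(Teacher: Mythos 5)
Your proposal is correct and follows essentially the same route as the paper's proof: expand $\pi_f$ into the product $f_0(r_0)E_0b_1\cdots$, apply the Leibniz rule so that $\partial$ hits either a $b_i$, a spectral factor $f_i(h)$ (producing the finite difference $\Delta^{(i)}f$ with the insertion of $\partial h$ via \eqref{eq-partial(f(h))}), or $v$, and then handle $\widetilde I_k$ and $I_{\alpha,k}$ by commuting $\Delta^{(i)}$ with the $s$- and $t$-integrals using the domination bounds set up around \eqref{eq-partial-1}. The one slip — your displayed formula places $E_\ell$ to the left of $\partial$, which would drop the $i=\ell$ term — is exactly the boundary issue you flag and resolve correctly afterwards (the paper's grouping is $b_\ell\,\partial\bigl(f_\ell(r_\ell)E_\ell b_{\ell+1}\cdots\bigr)$, so the sum does start at $i=\ell$).
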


Before looking at a proof, remark first that, using \eqref{eq-Delta-Xalphak} for $X_{\alpha,k}=\pi_{I_{\alpha,k}}$, see \eqref{eq-def-Xalphak}, (resp. $f_k(\xi)=\pi_{\widetilde{I}_k}$, see \eqref{eq-fk as a pi}), the RHS of \eqref{eq-Ypartial-vector-dvlpt} is written only in terms of the operators $X_{\alpha,k}$ and $X_{\alpha+1,k+1}$ (resp. $f_k(\xi)$ and $f_{k+1}(\xi)$, using \eqref{eq-Delta-fkxi}). This is a key point in the method exhibited in Section~\ref{The method and its simplifications}.

\begin{proof}
First case: Since $\pi_f= \mul \circ \big((f_0(r_0) \,E_0)^R \otimes \cdots \otimes (f_k(r_k) \,E_k)^R \big)$, one gets
\begin{align*}
\pi_f[b_1 \otimes \cdots \otimes b_{\ell} \partial \otimes \cdots \otimes b_k]\,v
& = f_0(r_0)\, E_0 b_1\, f_1(r_1)\, E_1 \cdots b_\ell \, \partial \big( f_{\ell}(r_{\ell})\, E_{\ell} b_{\ell+1} \cdots b_k f_k(r_k) \,E_k \,v\big).
\end{align*}
Thus, in the parenthesis, $\partial$ acts either on the $b_i$'s  or on the $f_i(r_i) E_{i}$'s or on $v$. When it acts on the $b_i$'s, this reproduces the first sum in \eqref{eq-Ypartial-vector-dvlpt} while when it acts on $v$, it gives the last term.\\
For the action on the $f_i(r_i) \,E_{i}=f_i(h)$, we can apply \eqref{eq-partial(f(h))} and get the total contribution
\begin{align*}
& \sum_{i=\ell}^k \,f_0(r_0) E_0 b_1\cdots b_i \partial \big(f_i(r_i) E_i\big)b_{i+1} \cdots b_kf_k(r_k) E_k \,v
\\
&\quad\quad  =\sum_{i=\ell}^k \,f_0(r_0) E_0 b_1\cdots b_i \tfrac{f_i(r_i)\, -\, f_i(r_{j})}{r_{i} \,- \,r_{j}}  E_i (\partial h) E_{j}b_{i+1} \cdots b_kf_k(r_k) E_k \,v
\\
&\quad\quad = \sum_{i=\ell}^k\, f_0(r_0) \cdots  \tfrac{f_i(r_i) - f_i(r_{j})}{r_{i} - r_{j}} \cdots f_k(r_k) \,
E_0 b_1\cdots b_i  E_i (\partial h) E_{j} b_{i+1} \cdots b_k E_k\,v.
\end{align*}
Swapping $r_{j}$ to $r_{i +1}$ and $r_j$ to $r_{j+1}$ for $j > i$, each spectral function in the last sum is
\begin{align*}
f_0(r_0) \cdots  \tfrac{f_i(r_i)\, -\, f_i(r_{i +1})}{r_{i} \,- \,r_{i+1}} \cdots f_k(r_{k+1})
&= 
\tfrac{1}{r_{i} \,- \,r_{i+1}} 
\big[  \begin{multlined}[t]
f_0(r_0) \cdots f_{i-1}(r_{i-1}) f_i(r_i) f_{i+1}(r_{i+2}) \cdots f_k(r_{k+1})
\\
- f_0(r_0) \cdots f_{i-1}(r_{i-1}) f_{i}(r_{i+1})  f_{i+1}(r_{i+2}) \cdots f_k(r_{k+1})
\big]
\end{multlined}
\\
& = \tfrac{1}{r_{i+1}\, - \,r_i}\,[ f(r_0, \dots, \widehat{r_i}, \dots, r_{k+1}) - f(r_0, \dots, \widehat{r_{i+1}}, \dots, r_{k+1})] = \Delta^{(i)}f(r_0,\dots,r_{k+1})
\end{align*}
which contributes to the second term of the RHS of \eqref{eq-Ypartial-vector-dvlpt}.

Case $f=\widetilde{I}_k$: From \eqref{eq-fkl}, \eqref{I tilde} and using \eqref{eq-pi integral}, we can apply the previous argument under the integration over $\Delta_k$. The only point to take care of, is the commutation of $\Delta^{(i)}$ with the integral in the second term of the RHS of \eqref{eq-Ypartial-vector-dvlpt}. 
Since $\Delta_k$ is compact and the integrand is smooth in $s$ and $r_\ell$, this commutation occurs even at coincidence points (where partial derivatives arise, see \eqref{def-Delta f}).

Case $f=I_{\alpha,k}$: For $\alpha=0$, the result is direct and the second term of \eqref{eq-Ypartial-vector-dvlpt} vanishes since $I_{0,k}$ is constant. \\
Assume now $\alpha>0$. From \eqref{eq-I in C}, once again the same argument can be applied under the integral and we only need to prove the commutation with integral along $t\in[0,\infty[$. With
\begin{align*}
 f(t;s;r_0,\dots,r_k) \vc e^{-t \sum_{\ell=0}^k\,(s_\ell-s_{\ell+1})\,r_\ell} , \quad  g(r_0,\dots,r_k) \vc \int_0^\infty dt\, t^{\alpha-1} f(t;s;r_0,\dots,r_k)=\Gamma(\alpha)\, \big[\sum_{\ell=0}^k\, s_\ell \,(r_\ell-r_{\ell-1}) \big]^{\,-\alpha},
\end{align*}
we have to show that
\begin{align*}
\big(\Delta^{(i)} \int_0^\infty dt\,& t^{\alpha-1} f(t;s;\cdot)\big)\,(r_0,\dots,r_{k+1})=\tfrac{1}{r_{i+1}\,-\,r_i} \big(g(r_0,\dots,\widehat{r_i},\dots,r_k)-g(r_0,\dots,\widehat{r_{i+1}},\dots,r_k)\big)
\end{align*}
coincides with
\begin{align*}
\int_0^\infty \dd t\,t^{\alpha-1}\,\big(\Delta^{(i)} f(t;s;\cdot) \big)\,(r_0,\dots,r_{k+1})
= \tfrac{1}{r_{i+1}\,-\,r_{i}}\, \int_0^\infty \dd t\,t^{\alpha-1}\,
\big(
e^{-t\sum_{\ell=0;\ell\neq i}^k\, s_\ell \,(r_\ell-r_{\ell-1}) }
-e^{-t\sum_{\ell=0;\ell\neq i+1}^k\, s_\ell \,(r_\ell-r_{\ell-1}) }
\big).
\end{align*}
By linearity, this is true for $r_i\neq r_{i+1}$ but at coincidence points we still have to show that the $t$-integral commutes with $\partial_{r_i}$.
For $r_i\in ]r_{\min}=\min r_j,r_{\max}=\max r_j[$ we have, using 
$t(s_i-s_{i+1})\,e^{-t\,(s_i-s_{i+1})\,r_i} \leq \epsilon^{-1} e^{-t\,(s_i-s_{i+1})\,(r_{\min}-\epsilon)}$ when $0<\epsilon<r_{\min}$,
\begin{align*}
t^{\alpha-1}\,\abs{(\partial_{r_i} f)(t;s; r_0,\dots,r_k) }
= (s_i -s_{i+1})\, t^\alpha\,f(t;s; r_0,\dots,r_k) \leq \epsilon^{-1} t^{\alpha-1} f(t;s;r_0,\dots, r_{i-1},r_{\min}-\epsilon,r_{i+1},\dots,r_k)
\end{align*}
and the RHS is $t$-integrable uniformly along $r_i$ which secures the commutation of the integral with $\partial_{r_i}$.
\end{proof}

\section{Total covariant derivative and normal coordinates}
\label{Total covariant derivative and normal coordinates}

In this section, we come back to the differential operator $P$ defined in \eqref{eq-def-P-upq} but we do not use Section~\ref{Section def of X}. Firstly, we rewrite $P$ in terms of a total covariant derivative, and since we know that the coefficients $\calR_r$ are invariant under a change of coordinates, we secondly gather the computation of several derivatives within a normal coordinate system.

\subsection{Total covariant derivative}

We need the total covariant derivative $\hnabla$, which combines the (gauge) connection $\nabla$ on $V$ with the Levi-Civita covariant derivative $\gnabla$ induced by the metric $g$. To avoid a definition of $\hnabla$ on the tensor products of $V$, $TM$ and $T^* M$ via heavy notations, and since we only need the action on $\End(V)$-valued tensors, it is sufficient to remark that $\hnabla$ satisfies
\begin{align*}
& \hnabla_\mu u = \nmu u = \pmu u + [A_\mu, u],
\label{eq-nablaLCu}
\\
&\hnabla_\mu a^\nu = 
\nmu a^\nu + \Gamma_{\mu \rho}^\nu a^\rho,
\qquad
\hnabla_\mu b_\nu 
=  \nmu b_\nu - \Gamma_{\mu\nu}^\rho b_\rho,
\\
&\hnabla_\mu g^{\alpha \beta} = 0,
\qquad
\hnabla_\mu g_{\alpha \beta} = 0,
\end{align*}
for any $\End(V)$-valued $(0,0)$-tensor $u$, $(1,0)$-tensor $a = a^\nu \pnu$, and $(0,1)$-tensor $b = b_\nu \,\dd x^\nu$. Here $A_\mu$ is the (local) gauge potential associated to $\nabla$. General formulas for $\End(V)$-valued $(r,s)$-tensors are easily obtained from these conventions.
\\
As usual, we use the short notation $\hnabla_\mu a^\nu = (\hnabla_\mu a)^\nu$ and $\hnabla_\mu b_\nu = (\hnabla_\mu b)_\nu$.

We first recall few formulae of Riemannian geometry:
\begin{align*}
&\abs{g}^{-1/2} \pmu \abs{g}^{1/2}
= \tfrac{1}{2} \pmu \ln \abs{g},
\\
&\Gamma_{\mu\nu}^\mu 
= -\tfrac{1}{2} g_{\alpha\beta} (\pnu g^{\alpha\beta}),
\\
&\Gamma^\rho=g^{\mu\nu} \Gamma_{\mu\nu}^\rho 
= \tfrac{1}{2} g^{\rho\sigma} g_{\alpha\beta} (\psigma g^{\alpha\beta}) - \psigma g^{\rho\sigma}
= - g^{\rho\sigma} \tfrac{1}{2} \psigma \ln \abs{g} - \psigma g^{\rho\sigma}.
\end{align*}
The curvature of the Levi-Civita connection $\gnabla$ is 
\begin{equation*}
R(X,Y) \vc \gnabla_X \gnabla_Y - \gnabla_Y \gnabla_X - \gnabla_{[X,Y]},
\end{equation*}
(\cite[p.~23]{Gilk01a}) and this expression is an endomorphism of the tangent bundle $TM$.\\
The Riemann tensors 
\begin{align*}
\tensor{R}{_{\mu\nu\, \rho}^\sigma} \psigma 
\vc R(\pmu, \pnu) \prho,
\qquad
R_{\mu\nu\, \rho\sigma} 
&\vc g_{\sigma\alpha} \tensor{R}{_{\mu\nu\, \rho}^\alpha}
= g( R(\pmu, \pnu) \prho, \psigma),
\end{align*}
satisfy $\tensor{R}{_{\mu\nu\, \rho}^\sigma}
=
\pmu \Gamma^\sigma_{\nu\rho} 
- \pnu \Gamma^\sigma_{\mu\rho} 
+ \Gamma^\sigma_{\mu\gamma} \Gamma^\gamma_{\nu\rho} 
- \Gamma^\sigma_{\nu\gamma} \Gamma^\gamma_{\mu\rho}$, 
with $\Gamma^\rho_{\mu\nu} \prho \vc \gnabla_{\pmu} \pnu$. \\
The Ricci tensor $\Ric_{\mu\nu}$ and the scalar curvature $\SC$ are
\begin{align}
\label{eq-def-Ricci-scalar-curv}
\Ric_{\mu\nu} \vc g^{\alpha\beta} R_{\mu\alpha\, \beta\nu} = \Ric_{\nu\mu},
\qquad
\SC &\vc g^{\mu\nu} \Ric_{\mu\nu}.
\end{align}
The Riemann tensor yields to some $\bbZ_2$-symmetries and to the first Bianchi identity:
\begin{align*}
R_{\mu\nu\, \rho\sigma} 
&= R_{\rho\sigma\, \mu\nu} = - R_{\nu\mu\, \rho\sigma} = - R_{\mu\nu\, \sigma\rho},
\qquad
R_{\mu\nu\, \rho\sigma} + R_{\mu\rho\, \sigma\nu} + R_{\mu\sigma\, \nu\rho} = 0.
\end{align*}
This also yields to some $\bbZ_2$-symmetries, to the derivative of the first Bianchi identity and the second Bianchi identity:
\begin{align}
& (\gnabla_\tau R)_{\mu\nu\, \rho\sigma}
= - (\gnabla_\tau R)_{\nu\mu\, \rho\sigma}
= - (\gnabla_\tau R)_{\mu\nu\, \sigma\rho}
= (\gnabla_\tau R)_{ \rho\sigma\,\mu\nu}, \nonumber
\\
& (\gnabla_\tau R)_{\mu\nu\, \rho\sigma} 
+ (\gnabla_\tau R)_{\nu\rho\,\mu\sigma}
+ (\gnabla_\tau R)_{\rho\mu\,\nu\sigma}
=0,  \nonumber
\\
& (\gnabla_\tau R)_{\mu\nu\, \rho\sigma}
+ (\gnabla_\rho R)_{\mu\nu\, \sigma\tau}
+ (\gnabla_\sigma R)_{\mu\nu\, \tau\rho}
= 0. \label{eq-Riemann-sym-2nd Bianchi}
\end{align}
In \eqref{eq-Riemann-sym-2nd Bianchi}, after a contraction over $\mu$ and $\rho$ first and then over $\nu$ and $\sigma$, and using the fact that $\gnabla_\tau g^{\mu\rho}=0$, one obtains $-(\gnabla_\tau \Ric)_{\nu\sigma} + (\gnabla_\rho R)^\rho\,_{\nu \,\sigma\tau} + (\gnabla_\sigma \Ric)_{\nu\tau} =0$ and $-(\gnabla_\tau \SC) + (\gnabla_\rho \Ric)^\rho\,_\tau +(\gnabla_\nu \,\Ric)^\nu\,_\tau=0$, so that 
\begin{align}
\label{div de Ricci}
g^{\mu\nu}(\gnabla_\mu \Ric)_{\nu \rho}=\tfrac{1}{2} (\gnabla_\rho \SC).
\end{align}
If we define
\begin{align}
\abs{R}^2 &\vc R_{\mu_1\mu_2\, \mu_3\mu_4} R^{\mu_1\mu_2\, \mu_3\mu_4} \label{square module of R}
\end{align}
then, by the first Bianchi identity, 
$R_{\mu_1\mu_2\, \mu_3\mu_4} R^{\mu_1\mu_3\, \mu_2\mu_4}
= - R_{\mu_1\mu_2\, \mu_3\mu_4} R^{\mu_1\mu_2\, \mu_4\mu_3}
- R_{\mu_1\mu_2\, \mu_3\mu_4} R^{\mu_1\mu_4\, \mu_3\mu_2}
= \abs{R}^2 - R_{\nu_1\nu_2\, \nu_3\nu_4} R^{\nu_1\nu_3\, \nu_2\nu_4}
$
with a relabeling for the second equality, so that
\begin{align*}
\tfrac{1}{2} \abs{R}^2
= R_{\mu_1\mu_2\, \mu_3\mu_4} R^{\mu_1\mu_3\, \mu_2\mu_4}.
\end{align*}
In the same vein, let us store
\begin{align}
\abs{\Ric}^2
& \vc \Ric_{\mu_1\mu_2} \Ric^{\mu_1\mu_2}
= \Ric_{\mu_1\mu_2} \Ric^{\mu_2\mu_1}. \label{square module of Ricci}
\end{align}
Given the field strength
\begin{align*}
F_{\mu\nu} \vc \pmu A_\nu - \pnu A_\mu + [A_\mu, A_\nu],
\end{align*}
one has for any section $u$ of $\End(V)$ and similarly, for any tensors $a = a^\nu \pmu$ and $b = b_\nu \dd x^\nu$,
\begin{align*}
 [\nabla_\mu, \nabla_\nu]\, u = [F_{\mu\nu}, u],
\qquad
[\gnabla_\mu, \gnabla_\nu] \,a^\rho
= \tensor{R}{_{\mu\nu\, \sigma}^\rho} a^\sigma,
\qquad
[\gnabla_\mu, \gnabla_\nu] \,b_\rho
&= - \tensor{R}{_{\mu\nu\, \rho}^\sigma} b_\sigma.
\end{align*}
Combining these expressions, for any $\End(V)$-valued tensors $a = a^\mu \pmu$ and $b = b_\nu\, \dd x^\nu$, one obtains
\begin{align*}
[\hnabla_\mu, \hnabla_\nu]\, u
&= [F_{\mu\nu}, u],
&
[\hnabla_\mu, \hnabla_\nu]\, a^\sigma
&= [F_{\mu\nu}, a^\rho] + \tensor{R}{_{\mu\nu\, \rho}^\sigma} a^\rho,
&
[\hnabla_\mu, \hnabla_\nu]\, b_\rho
&= [F_{\mu\nu}, b_\rho] - \tensor{R}{_{\mu\nu\, \rho}^\sigma} b_\sigma,
\end{align*}
and for any $s \in \Gamma(V)$
\begin{align}
\label{eq-commutator of hnabla}
[\hnabla_\mu,\hnabla_\nu] \,s =  F_{\mu\nu} \,s.
\end{align}
Moreover,
\begin{align}
&g^{\mu\nu} \,\hnabla_\mu \hnabla_\nu\, u 
=
g^{\mu\nu} ( \nmu \nnu \,u - \Gamma^\rho_{\mu\nu} \nrho u )
=
g^{\mu\nu} \nmu \nnu \,u - \Gamma^\mu \nmu u,
\label{eq-laplacianLCu}
\\
&\hnabla_\mu p^\mu
 = \nmu p^\mu+\Gamma^\mu_{\mu \rho} \,p^\rho
= \nmu p^\mu - \tfrac{1}{2} g_{\alpha\beta} (\pmu g^{\alpha\beta}) \,p^\mu.    \nonumber
\end{align}

\medskip
Let us now come back to the operator $P$.
Applying the definition \eqref{eq-def-Nmu}, we can rewrite $P$ in a (fully) covariant way as
\begin{align}
\label{eq-P-upq-hatnabla}
P = - ( g^{\mu\nu} \, \hnmu u \hnnu  + p^\nu \,\hnnu +q)= - (g^{\mu\nu} u \hnmu \hnnu + N^\nu\, \hnnu + q),
\end{align}
and the following result is a generalization of \cite[Lemma 1.2.1]{Gilk03a}:

\begin{lemma}
\label{lem-N=0}
Given $P$ as in \eqref{eq-P-upq-hatnabla}, there exist a connection $\hnabla'$ and a section $q'$ of $\End(V)$ such that 
\begin{align}
\label{eq-P en nabla'} 
P= -(g^{\mu\nu} u\hnabla_\mu'\hnabla_\nu' +q'),
\end{align}
given by $\hnabla'_\mu\vc \hnabla_\mu +\tfrac 12 u^{-1} N_\mu$ and $q'\vc q- \tfrac 12 g^{\mu\nu} u \hnabla_\mu(u^{-1} N_\nu) -\tfrac 14 N^\nu u^{-1} N_\nu$.
\end{lemma}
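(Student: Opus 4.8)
The plan is to proceed by a direct "completion of the square" computation. Starting from the form $P = -(g^{\mu\nu} u\hnmu\hnnu + N^\nu\hnnu + q)$ in \eqref{eq-P-upq-hatnabla}, I would substitute the proposed connection $\hnabla'_\mu = \hnabla_\mu + \tfrac12 u^{-1} N_\mu$ into the second-order operator $g^{\mu\nu} u\hnabla'_\mu\hnabla'_\nu$ and expand it term by term. Explicitly, for a section $s$ of $V$,
\begin{align*}
g^{\mu\nu} u\hnabla'_\mu\hnabla'_\nu s
&= g^{\mu\nu} u\big(\hnmu + \tfrac12 u^{-1} N_\mu\big)\big(\hnnu s + \tfrac12 u^{-1} N_\nu s\big)
\\
&= g^{\mu\nu} u\hnmu\hnnu s + \tfrac12 g^{\mu\nu} u\,\hnmu(u^{-1} N_\nu s) + \tfrac12 g^{\mu\nu} N_\mu\hnnu s + \tfrac14 g^{\mu\nu} N_\mu u^{-1} N_\nu s.
\end{align*}
Then I would apply the Leibniz rule to $\hnmu(u^{-1} N_\nu s) = \hnmu(u^{-1} N_\nu)\,s + u^{-1} N_\nu\hnnu s$ and collect the first-order terms. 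The coefficient of $\hnnu s$ becomes $\tfrac12 g^{\mu\nu} u u^{-1} N_\mu + \tfrac12 g^{\mu\nu} N_\mu = g^{\mu\nu} N_\mu = N^\nu$ (using $g^{\mu\nu}=g^{\nu\mu}$ and the index conventions for $N^\nu = g^{\mu\nu} N_\mu$), which is exactly the first-order term in \eqref{eq-P-upq-hatnabla}.

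Having matched the second- and first-order parts, I would read off what is left at zeroth order: the terms $\tfrac12 g^{\mu\nu} u\,\hnmu(u^{-1} N_\nu)\,s + \tfrac14 g^{\mu\nu} N_\mu u^{-1} N_\nu s$. To recover $P = -(g^{\mu\nu} u\hnabla'_\mu\hnabla'_\nu + q')$ one must therefore set $q' = q - \tfrac12 g^{\mu\nu} u\,\hnmu(u^{-1} N_\nu) - \tfrac14 g^{\mu\nu} N_\mu u^{-1} N_\nu$, which is precisely the stated formula (with $\tfrac14 N^\nu u^{-1} N_\nu = \tfrac14 g^{\mu\nu} N_\mu u^{-1} N_\nu$). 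It remains to check that $\hnabla'$ is genuinely a connection on $V$: since $\hnabla$ is a connection and $\tfrac12 u^{-1} N_\mu$ is a smooth $\End(V)$-valued one-form, $\hnabla' = \hnabla + \tfrac12 u^{-1} N$ satisfies the Leibniz rule and is $C^\infty(M)$-linear in the vector-field slot, hence is a connection; this is the standard fact that the space of connections is an affine space over $\End(V)$-valued one-forms. One should also note that $\hnabla'$ still combines with the Levi-Civita connection in the same way (only the $V$-part is altered), so the compatibility relations $\hnabla'_\mu g^{\alpha\beta}=0$ etc. are untouched.

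I do not expect a serious obstacle here; the only points requiring mild care are bookkeeping with the raised/lowered index on $N$ (ensuring $N^\nu = g^{\mu\nu} N_\mu$ is used consistently and that the symmetry $g^{\mu\nu}=g^{\nu\mu}$ is invoked when symmetrizing the two first-order contributions) and the correct placement of $u$ and $u^{-1}$, since these matrices need not commute with $N_\mu$. Because everything is an identity of differential operators acting on arbitrary $s\in\Gamma(V)$, the verification is purely algebraic once the expansion above is written out, and comparing with \eqref{eq-P-upq-hatnabla} term by term closes the argument.
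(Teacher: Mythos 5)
Your proposal is correct and follows essentially the same route as the paper: a direct expansion of $g^{\mu\nu}u\hnabla'_\mu\hnabla'_\nu$ with the Leibniz rule, matching the first-order term $N^\nu\hnnu$ via the symmetry of $g^{\mu\nu}$, and absorbing the leftover zeroth-order terms into $q'$. The only difference is cosmetic (you derive $q'$ rather than verify the given one, and you add the standard remark that $\hnabla'=\hnabla+\tfrac12 u^{-1}N$ is again a connection), and the small index slip $u^{-1}N_\nu\hnnu s$ for $u^{-1}N_\nu\hnmu s$ is harmless under the $g^{\mu\nu}$ contraction.
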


\begin{proof}
This follows from a direct computation (omitting the section applied upon):
\begin{align*}
  g^{\mu\nu} u\hnabla_\mu'\hnabla_\nu' +q' & =  g^{\mu\nu} u\,(\hnabla_\mu +\tfrac 12 u^{-1}N_\mu)\,(\hnabla_\nu +\tfrac 12 u^{-1}N_\nu) +q- \tfrac 12 g^{\mu\nu} u \hnabla_\mu(u^{-1} N_\nu) -\tfrac 14 N^\nu u^{-1} N_\nu \\
 & = +g^{\mu\nu}\big[u\hnabla_\mu \hnabla_\nu  +\tfrac 12 u \hnabla_\mu (u^{-1}N_\nu )
  +\tfrac 12 N_\nu \hnabla_\mu +\tfrac 12 N_\mu \hnabla_\nu+\tfrac 14 N_\mu u^{-1} N_\nu -\tfrac 12 u\hnabla_\mu (u^{-1} N_\nu)\big]
+q  -\tfrac 14 N^\nu u^{-1} N_\nu \\
 & = g^{\mu\nu} \, \big[u\hnabla_\mu \hnabla_\nu + N_\mu \hnabla_\nu \big] +q =-P.
\end{align*}
\end{proof}

\begin{remark}
\label{rem-N zero or not}
Compared with \eqref{eq-P-upq-hatnabla}, this rewriting of $P$ in \eqref{eq-P en nabla'} greatly simplifies the computations of $\calR_r$ because it means that we may assume $N^\nu=0$ for all $\nu$. This is the traditional way to present the results on the heat kernel coefficients, see for instance \cite{Gilk01a,Gilk03a}. In particular, $N^\nu=0$ for all $\nu$ is equivalent to $L^\mu=-\Gamma^\mu u$ in \eqref{def-PHLq}. \\
If one insists on keeping $N^\mu\neq 0$, Corollary~\ref{cor-(inverse h)ah} can be useful because $\hnabla_\mu' u=\hnabla_\mu u +\tfrac{1}{2} u^{-1}N_\mu u$.
\end{remark}

\subsection{Covariant derivatives and normal coordinates}
\label{Covariant derivatives and normal coordinates}

In this subsection, some results on the iterated covariant derivatives of $u, \,p^\mu, \dots$ are shown and they will be used for the computation of $\calR_2$. The code generates their generalizations to higher orders in derivatives for the computation of $\calR_4$.

In the following, for any $\ell \in \bbN$ and indices $\nu_1, \dots, \nu_\ell$, we use the compact notation
\begin{equation*}
\nabla^\ell_{\nu_1 \dots \nu_\ell} \vc \nabla_{\!\nu_1} \cdots \nabla_{\!\nu_\ell} \text{ and the same for } \hnabla\text{ and }\partial.
\end{equation*}
Comparing the action of $\hnabla$ and $\nabla$ on $u$, we get
\begin{align}
\hnabla_{\nu_1} u
= 
\nabla_{\nu_1} u, 
\qquad
\hnabla^2_{\nu_1\nu_2} u
= 
\nabla^2_{\nu_1\nu_2} u - \Gamma^{\sigma_1}_{\nu_1\nu_2} (\hnabla_{\sigma_1} u) .
\label{hnabla2 u}
\end{align}
We deduce from \eqref{hnabla2 u} that if $u$ is parallel for $\nabla$ (\textsl{i.e.} $\nmu u = 0$), then $u$ is also parallel for $\hnabla$.  
Similarly,
\begin{align}
\hnabla_{\nu_1} p^{\mu_1}
& = 
\nabla_{\nu_1} p^{\mu_1} + \Gamma^{\mu_1}_{\nu_1 \sigma_1} \,p^{\sigma_1}. 
\end{align}
For $L^\mu =N^\mu- \Gamma^\mu u= p^\mu + g^{\mu\nu} (\nnu u) - \Gamma^\mu u$, 
\begin{align}
\nabla_{\nu_1} L^{\mu_1}
&= \nabla_{\nu_1} N^{\mu_1} 
	- \Gamma^{\mu_1} (\nabla_{\nu_1} u)
         - (\partial_{\nu_1} \Gamma^{\mu_1})\, u ,
\label{nabla L}
\\
\nabla_{\nu_1} N^{\mu_1}
&= \nabla_{\nu_1} p^{\mu_1} 
	+ g^{\mu_1\sigma_1} (\nabla^2_{\nu_1\sigma_1} u) 
	+ (\partial_{\nu_1} g^{\mu_1\sigma_1}) (\nabla_{\sigma_1} u).\nonumber
\end{align}
To compute the RHS of these expressions, we also need to know the derivatives of $\Gamma_{\alpha\beta}^\gamma$ and $\Gamma^\gamma$:
\begin{align*}
\Gamma_{\alpha\beta}^\gamma
&= 
\tfrac{1}{2} g^{\gamma\sigma_1} (
	\partial_\alpha g_{\sigma_1\beta} 
	+ \partial_\beta g_{\sigma_1\alpha} 
	- \partial_{\sigma_1} g_{\alpha\beta}
),
\\
\partial_{\nu_1} \Gamma_{\alpha\beta}^\gamma
&= 
\tfrac{1}{2} \big[ 
(\partial_{\nu_1} g^{\gamma\sigma_1}) (
	\partial_\alpha g_{\sigma_1\beta} 
	+\partial_\beta g_{\sigma_1\alpha} 
	-\partial_{\sigma_1} g_{\alpha\beta}
)
+ g^{\gamma\sigma_1} (
	\partial^2_{\nu_1\alpha} g_{\sigma_1\beta} 
	+ \partial^2_{\nu_1\beta} g_{\sigma_1\alpha} 
	- \partial^2_{\nu_1\sigma_1} g_{\alpha\beta}
)
\big].
\end{align*}
Thus, for $\Gamma^\gamma = g^{\alpha\beta} \Gamma^\gamma_{\alpha\beta}$, 
\begin{align*}
\partial_{\nu_1} \Gamma^\gamma
&= (\partial_{\nu_1} g^{\alpha\beta})\, \Gamma^\gamma_{\alpha\beta}
+ g^{\alpha\beta} (\partial_{\nu_1} \Gamma^\gamma_{\alpha\beta}).
\end{align*} 

The swap of $\nabla$ to $\hnabla$ in (\ref{hnabla2 u}--\ref{nabla L}) can be reduced to a peculiar coordinate system and we now present  some results concerning derivatives of quantities in normal coordinates, namely a geodesic coordinate system centered at a pinned point $x \in M$.  

Let us use the notation $\tonc$ to map a quantity to its value in normal coordinates at $x$. We warn the reader that, to alleviate the presentation, we omit in the sequel the explicit dependency to $x$. \\
For the following results on the metric and its inverse or on the Christoffel symbols, see for instance \cite[Sect.~1.11.3]{Gilk01a},  \cite[p.~5]{Gilk03a}, \cite{Sakai} or \cite{Brew09a}.
\begin{align}
\label{eq-partial0gq}
g_{\alpha\beta}
&\tonc 
	\delta_{\alpha\beta},
\qquad
\partial_{\nu_1} g_{\alpha\beta}
\tonc 
	0,
\qquad 
\partial^2_{\nu_1\nu_2} g_{\alpha\beta}
\tonc 
	\sumperm_{\nu_1, \nu_2} \tfrac{1}{3} R_{\nu_1\alpha\, \nu_2\beta}
	= \tfrac{1}{3} \big[ R_{\nu_1\alpha\, \nu_2\beta} + R_{\nu_2\alpha\, \nu_1\beta} \big],
\\
\partial^3_{\nu_1\nu_2\nu_3} g_{\alpha\beta}
&\tonc 
	\sumperm_{\nu_1, \nu_2, \nu_3} 
	\tfrac{1}{3!} (\hnabla_{\nu_1} R)_{\nu_2 \alpha\, \nu_3 \beta},
\qquad
\label{eq-partial4gq}
\partial^4_{\nu_1\nu_2\nu_3\nu_4} g_{\alpha\beta}
\tonc 
	\sumperm_{\nu_1, \nu_2, \nu_3, \nu_4} \big[ 
	\tfrac{1}{20}  (\hnabla^2_{\nu_1\nu_2} R)_{\nu_3 \alpha\, \nu_4\beta}
	+ \tfrac{2}{45} R_{\nu_1\alpha\, \nu_2\sigma_1} \tensor{R}{_{\nu_3 \beta\, \nu_4}^{\sigma_1}}
	\big]
\end{align}
where 
$\sumperm_{\nu_1, \dots, \nu_n} A_{\nu_1 \dots \nu_n}
\vc
\sum_{\,\sigma \in \bbS_n} A_{\nu_{\sigma(1)} \dots \nu_{\sigma(n)}}
$ and $\bbS_n$ is the permutation group of $n$ elements. \\
Similarly,
\begin{align}
g^{\alpha\beta}
&\tonc 
	\delta^{\alpha\beta},
\qquad \label{eq-partial1ginvq}
\partial_{\nu_1} g^{\alpha\beta}
\tonc 
	0,
\\
\Gamma_{\alpha\beta}^\gamma
&\tonc 
	0,
\qquad
\partial_{\nu_1} \Gamma_{\alpha\beta}^\gamma
\tonc 
	\tfrac{1}{3} \sumperm_{\alpha, \beta} \tensor{R}{_{\nu_1\alpha\, \beta}^{\gamma}}
	= 
	\tfrac{1}{3} \big[ 
		\tensor{R}{_{\nu_1\alpha\, \beta}^{\gamma}} 
		+ \tensor{R}{_{\nu_1\beta\, \alpha}^{\gamma}}  \nonumber
		\big].
\end{align}
Thus we obtain
\begin{align}
\Gamma^{\mu_1}
&\tonc 
	0, 
\qquad
\partial_{\nu_1} \Gamma^{\mu_1}
\tonc 
	\tfrac{2}{3} \tensor{\Ric}{_{\nu_1}^{\mu_1}},  \nonumber
\\
\label{eq-hnabla1nc u}
\nabla_{\nu_1} u
&\tonc 
	\hnabla_{\nu_1} u \quad \text{(equality in any coordinate system)}, 
\\
\label{eq-hnabla2nc u}
\nabla^2_{\nu_1\nu_2} u
&\tonc 
	\hnabla^2_{\nu_1\nu_2} u,
\\
\label{eq-hnabla1nc p}
\nabla_{\nu_1} p^{\mu_1}
&\tonc 
	\hnabla_{\nu_1} p^{\mu_1}.
\end{align}
For $L^\mu$, we also get from \eqref{nabla L}:
\begin{align*}
L^{\mu_1}
&\tonc 
	p^{\mu_1} 
	+ g^{\mu_1\sigma_1} (\nabla_{\sigma_1} u),
\qquad
\nabla_{\nu_1} L^{\mu_1}
\tonc 
	\nabla_{\nu_1} p^{\mu_1} 
	+ g^{\mu_1\sigma_1} (\nabla^2_{\nu_1\sigma_1} u) 
	- \tfrac{2}{3} \,\tensor{\Ric}{_{\nu_1}^{\mu_1}}u.
\end{align*}
The corresponding expressions given in terms of $N^\mu$ are
\begin{align*}
L^{\mu_1}
&\tonc 
	N^{\mu_1},
\qquad
\nabla_{\nu_1} L^{\mu_1}
\tonc 
	\nabla_{\nu_1} N^{\mu_1}- \tfrac{2}{3}\, \tensor{\Ric}{_{\nu_1}^{\mu_1}}u.
\end{align*}
Finally, for $H^{\mu_1\mu_2}=g^{\mu_1\mu_2} u$, we obtain directly from \eqref{eq-partial1ginvq}:
\begin{align*}
H^{\mu_1\mu_2}
&\tonc
	g^{\mu_1\mu_2}\, u,
\qquad
\nabla_{\nu_1} H^{\mu_1\mu_2}
\tonc 
	g^{\mu_1\mu_2} (\nabla_{\nu_1} u),
\qquad
\nabla^2_{\nu_1\nu_2} H^{\mu_1\mu_2}
\tonc 
	g^{\mu_1\mu_2} (\nabla^2_{\nu_1\nu_2} u)
	- \tfrac{1}{3} \sumperm_{\nu_1, \nu_2}
		\tensor{R}{_{\nu_1}^{\mu_1}_{\, \nu_2}^{\mu_2}}\, u .
\end{align*}

\section{The method and its simplifications}
\label{The method and its simplifications}

In this section, we start with $h$ equal to $H(x,\xi)$, so that the definition \eqref{eq-def Pi f} is specialized to the operator $f_k(\xi)=\pi_{\widetilde{I}_k}$ as seen in \eqref{eq-fk as a pi}. With $\partial = \nnu$ acting on any local section $s$ of $V$ by $\nnu \,s = \pnu \,s + A_\nu \,s$, so that the required Leibniz rule holds, and using \eqref{eq-Delta-fkxi}, Proposition~\ref{prop-Propagation} becomes

\begin{proposition}
\label{lem-fk-nabla-vector-propagation}
Given a local trivialization $s : U \to \bbC^N$ of a section in $\Gamma(V)$, and $B_\ell$, $1 \leq i\leq k$, which are $k$ matrix-valued differential operators in $\nmu$ depending on $x$ and (linearly in) $\xi$, the following holds true:
\begin{align}
f_k(\xi)[B_1 \otimes \cdots \otimes B_i \nabla_\nu \otimes \cdots \otimes B_k]\, s
= \begin{aligned}[t]
& \sum_{j=i+1}^k f_k(\xi)[B_1 \otimes \cdots \otimes(\nnu B_j)\otimes \cdots\otimes  B_k]\, s
\\
&- \sum_{j=i}^k f_{k+1}(\xi)[B_1 \otimes \cdots \otimes B_j\otimes (\nnu H)\otimes B_{j+1}\otimes \cdots\otimes  B_k]\, s
\\
&+  f_k(\xi)[B_1 \otimes \cdots \otimes B_i \otimes \cdots \otimes B_k] (\nnu \,s).
\end{aligned}
\label{eq-fk-nabla-vector-propagation}
\end{align}
\end{proposition}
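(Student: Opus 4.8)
The plan is to recognize this proposition as the special case of Proposition~\ref{prop-Propagation} in which the abstract algebra $\calA$ is taken to be $C^\infty(U\times\bbR^d, M_N(\bbC))$, the positive invertible element $h$ is $H(x,\xi) = g^{\mu\nu}(x)u(x)\xi_\mu\xi_\nu$, the derivation $\partial$ is the covariant derivative $\nnu$ acting on $\calA$ by $\nnu a = \pnu a + [A_\nu, a]$, and the representation space $\calH$ is the space of local sections $s : U \to \bbC^N$ on which $\nnu$ acts by $\nnu s = \pnu s + A_\nu s$. The first step is to verify the hypotheses of Proposition~\ref{prop-Propagation}: that $\nnu$ is indeed a derivation of $\calA$ (it is, being a sum of a parameter derivative $\pnu$ and the inner derivation $[A_\nu,\cdot]$), and that the Leibniz rule $\nnu(a s) = (\nnu a)s + a(\nnu s)$ holds for $a \in \calA$, $s \in \calH$; this is immediate from $\pnu(as) + A_\nu(as) = (\pnu a)s + a\pnu s + A_\nu a s = (\pnu a + [A_\nu,a])s + a(\pnu s + A_\nu s)$.

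Next I would invoke \eqref{eq-fk as a pi}, which identifies $f_k(\xi)$ restricted to arguments without derivatives with $\pi_{\widetilde I_k}$ for $h = H(x,\xi)$; since the arguments $B_\ell \in M_N[\xi,\nabla]$ here are matrix-valued differential operators in $\nmu$ (depending on $x$ and linearly on $\xi$), they are precisely the kind of $\calA$-valued differential operators in $\partial$ allowed by Proposition~\ref{prop-Propagation}. Applying \eqref{eq-Ypartial-vector-dvlpt} with $f = \widetilde I_k$ and $b_\ell = B_\ell$ then yields, for the single term $B_i \nnu$ sitting at position $i$,
\[
\pi_{\widetilde I_k}[B_1 \otimes \cdots \otimes B_i\nnu \otimes \cdots \otimes B_k]\,s
= \sum_{j=i+1}^k \pi_{\widetilde I_k}[B_1 \otimes \cdots \otimes (\nnu B_j) \otimes \cdots \otimes B_k]\,s
+ \sum_{j=i}^k (\Delta^{(j)}_\partial \pi_{\widetilde I_k})[B_1 \otimes \cdots \otimes B_k]\,s
+ \pi_{\widetilde I_k}[B_1 \otimes \cdots \otimes B_k]\,(\nnu s).
\]

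The remaining step is to rewrite the middle sum using \eqref{eq-Delta-fkxi}, namely $\Delta^{(j)}_\partial f_k(\xi) = -\, i_{\partial h}^{(j+1)} f_{k+1}(\xi)$ with $h = H(x,\xi)$ and $\partial h = \nnu H$; by the definition \eqref{def-il} of the insertion operator $i^{(j+1)}$, this means $(\Delta^{(j)}_\partial f_k(\xi))[B_1 \otimes \cdots \otimes B_k] = -\, f_{k+1}(\xi)[B_1 \otimes \cdots \otimes B_j \otimes (\nnu H) \otimes B_{j+1} \otimes \cdots \otimes B_k]$, which inserts $\nnu H$ at the $(j+1)$-th slot. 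Substituting this into the sum $\sum_{j=i}^k$ produces exactly the second line of \eqref{eq-fk-nabla-vector-propagation}, with the overall minus sign, and identifying $\pi_{\widetilde I_k}$ with $f_k(\xi)$ throughout gives the claimed formula. The only genuine subtlety — which is already handled inside the proof of Proposition~\ref{prop-Propagation} in the case $f = \widetilde I_k$ — is the interchange of the finite-difference operation $\Delta^{(j)}$ with the $s$-integration over the compact simplex $\Delta_k$ defining $\widetilde I_k$, including at coincidence points $r_j = r_{j+1}$ where $\Delta^{(j)}$ degenerates to a partial derivative; since that interchange has been justified there by smoothness of the integrand on the compact $\Delta_k$, nothing new is needed here and the proof reduces to a bookkeeping of indices.
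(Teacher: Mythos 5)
Your proposal is correct and follows exactly the route the paper takes: the paper introduces Proposition~\ref{lem-fk-nabla-vector-propagation} precisely as the specialization of Proposition~\ref{prop-Propagation} to $h=H(x,\xi)$ with $\partial=\nnu$ (checking the Leibniz rule on sections), combined with the identification $f_k(\xi)=\pi_{\widetilde I_k}$ from \eqref{eq-fk as a pi} and the rewriting of the finite-difference terms via \eqref{eq-Delta-fkxi}. Your version merely spells out the bookkeeping that the paper leaves implicit, and all the identifications (including the position of the inserted $\nnu H$ and the sign) are correct.
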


When $f_k(\xi)[B_1 \otimes \cdots \otimes B_k]$ contains more than one covariant derivative $\nabla_{\!\nu}$ to propagate, they can accumulate on $s$ as $\nabla^{\ell}_{\!\nu_1\dots \nu_\ell} s$, and this produces complicated expressions. 

We are interested in the computation of $f_k(\xi)[B_1 \otimes \cdots \otimes B_k]$ as a matrix-valued function, that is as a linear map: $v\in \bbC^N  \mapsto f_k(\xi)[B_1 \otimes \cdots \otimes B_k] \,v$. In that case, $\nnu v = A_\nu\, v$ and $\nabla^2_{\!\nu_1\nu_2} v = (\partial_{\nu_1} A_{\nu_2} + A_{\nu_1} A_{\nu_2})\, v$ because $v$ is constant.

In general, we can write the result after the propagation of some $\nabla_{\!\nu}$ as a sum of terms like $f_{k'}(\xi)[B_1 \otimes \cdots \otimes B_{k'}] \,Q[A] \,v$, where $Q[A]$ is a matrix-valued function written as a polynomial expression in the $A_\mu$ and their derivatives. \\
We begin the process with $f_k(\xi)[B_1 \otimes \cdots \otimes B_k] \,Q_0[A]\, v$ where  $Q_0[A] = \bbbone$ (the constant unital section in $\End(V)$) and after applying $\ell$ covariant derivatives, we get $f_{k'}(\xi)[B_1 \otimes \cdots \otimes B_{k'}]\, Q_\ell[A] \,v$ where
\begin{equation*}
Q_\ell[A]\, v = \nnu Q_{\ell-1}[A]\, v = \big(\pnu Q_{\ell-1}[A] + A_\nu Q_{\ell-1}[A]\big) \,v.
\end{equation*}
It is easy to establish that $Q_\ell[A]$ is an homogeneous polynomial of degree $\ell$ when counting a degree $1$ for each $\pnu$ and $A_\nu$. In the final expression, these factors $Q[A]$ generate “gauge invariant” contributions, in term of the curvature of $A_\mu$ and its derivatives.

Finally, notice that Proposition~\ref{lem-fk-nabla-vector-propagation} reduces to \cite[Lemma~2.1]{IochMass17a} for $A_\mu = 0$, \textsl{i.e.} for $\nnu = \pnu$.

\medskip
For $\mu_i\in \{1,\dots,d\}$, let $T_{\mu_1\dots \mu_{2p},k}$ be the operators from $\calA^{\otimes^k}$ to itself defined by
\begin{align*}
T_{\mu_1\dots \mu_{2p},k}
\vc g_d \, G_{\mu_1\dots \mu_{2p}} \, I_{d/2+p,k}(r_0, \dots, r_k)\, (E_0^R \otimes \cdots \otimes E_k^R),
\end{align*}
with the notations
\begin{align}
g_d (x)
&\vc \tfrac{1}{(2\pi)^{\,d}} \int_{\bbR^d}\! \dd\xi\, e^{-\abs{\xi}_{g(x)}^2}
= \tfrac{\abs{g(x)}^{1/2}}{2^{d}\,\pi^{d/2}}\,, 
\label{eq-gd}
\\ 
G_{\mu_1\dots \mu_{2p}} (x)
&\vc  \tfrac{1}{(2\pi)^{\,d}\,g_d(x)} \int_{\bbR^d} \! \dd\xi\, \xi_{\mu_1} \cdots \xi_{\mu_{2p}}\, e^{-g^{\alpha\beta}(x)\, \xi_\alpha\xi_\beta}
\nonumber
\\
&= \tfrac{1}{2^{2p}\,p!} \, \big( \sum_{\rho \in \bbS_{2p}} g_{\mu_{\rho(1)} \mu_{\rho(2)}}\cdots g_{\mu_{\rho(2p-1)} \mu_{\rho(2p)}} \big)(x)
= \tfrac{(2p)!}{2^{2p}\,p!} \,g_{(\mu_1\mu_2}\cdots g_{\mu_{2p-1}\mu_{2p})}(x),
 \label{eq-Gmu-permutations}
\end{align}
where $\bbS_{2p}$ is the symmetric group of permutations on $2p$ elements and the parenthesis in the index of $g$ is the complete symmetrization over all indices and with the convention that, when $p=0$, $G_{\mu_1\dots \mu_{2p}}$ is just 1. For instance, $G_{\mu_1\mu_2}=\tfrac{1}{2}\, g_{\mu_1\mu_2}$ and 
\begin{align}
\label{eq-G1234}
G_{\mu_1\mu_2\mu_3\mu_4} 
= \tfrac{1}{4} \, (
g_{\mu_1\mu_2}g_{\mu_3\mu_4}
+ g_{\mu_1\mu_3}g_{\mu_2\mu_4}
+ g_{\mu_1\mu_4}g_{\mu_2\mu_3}
).
\end{align}

Let us also introduce the operators
\begin{align}
\label{eq-def-Xmuk}
 X_{\alpha, k, \mu_1\dots \mu_{2p}} 
 \vc \, G_{\mu_1\dots \mu_{2p}} \, X_{\alpha ,k},
\end{align}
which are justified as
\begin{align}
\label{eq-fk-Xmuk}
 X_{d/2+p,k,\mu_1\dots \mu_{2p}} 
 = \tfrac{1}{g_d}\tfrac{1}{(2\pi)^{\,d}} \int_{\bbR^d} \!\dd\xi \, \xi_{\mu_1} \cdots \xi_{\mu_{2p}} \, f_k(\xi),
\end{align}
because, in \eqref{eq-integral of fk}, $c_{k,\mu_1,\dots,\mu_{2p}} = (2\pi)^{\,d} g_d \,G_{\mu_1,\dots,\mu_{2p}}$ 
and there is the following relations between the $f_k$ and the $X_{\alpha,k}$:
\begin{align*}
\tfrac{1}{(2\pi)^{\,d}} \int_{\bbR^d} \dd\xi \, \xi_{\mu_1} \cdots \xi_{\mu_{2p}} \, f_k(\xi)[ B_1 \otimes \cdots \otimes B_k]
&= \mul\circ T_{\mu_1\dots \mu_{2p},k}[B_1 \otimes \cdots \otimes B_k]
= g_d \, G_{\mu_1\dots \mu_{2p}} \, X_{d/2+p,k}[B_1 \otimes \cdots \otimes B_k]
\\
&
= g_d \,  X_{d/2+p,k,\mu_1\dots \mu_{2p}} [B_1 \otimes \cdots \otimes B_k].
\end{align*}
The appearance of $H(\xi)$ in \eqref{eq-fk-nabla-vector-propagation} forces to consider the $\xi$-dependence of the arguments $B_1 \otimes \cdots \otimes B_k$: each factor depends polynomially on $\xi=(\xi_1,\dots,\xi_d)$ because $B_i=\sum B^{\mu_1\dots\mu_{\ell_i}} \xi_{\mu_1}\dots \xi_{\mu_{\ell_i}}$ with $B^{\mu_1\dots\mu_{\ell_i}}(x) \in M_N(\bbC)$ independent of $\xi$. \\
Thus $B_1 \otimes \cdots \otimes B_k$ is a sum of terms like $\xi_{\mu_1}\dots \xi_{\mu_{\ell} }(B_1 \otimes \cdots \otimes B_k)^{\mu_1\dots \mu_{\ell}}$ and the symmetry of the $\xi$-integral implies that the ones which only survive are when $\ell=2p$ for some $p\in \bbN$. \\
As a consequence, each function $a_r(a, P)(x)$ is expressed formally as a sum
\begin{align}
\label{eq-a-T-rel}
a_r(a, P)(x)
&= \abs{g}^{-1/2}\sum \,\tr \,\big[ a(x) \,\mul\circ T_{\mu_1\dots\mu_{2p},k}(x)[(B_1(x) \otimes \cdots \otimes B_k(x))^{\mu_1\dots\mu_{2p}}] \big],
\end{align}
and the wanted factor $\calR_r$ is a sum
\begin{align}
\label{eq-R-Xdpk}
\calR_r 
&=\abs{g}^{-1/2}\sum \, \mul\circ T_{\mu_1\dots\mu_{2p},k}[ (B_1 \otimes \cdots \otimes B_k)^{\mu_1\dots\mu_{2p}}]
=\tfrac{1}{(4\pi)^{\,d/2}} \sum  \,X_{d/2+p,k,\mu_1\dots \mu_{2p}}[(B_1 \otimes \cdots \otimes B_k)^{\mu_1\dots \mu_{2p}}]
\end{align}
because $\abs{g}^{-1/2} g_d = \tfrac{1}{2^{d}\,\pi^{d/2}}$. \\
Before we give a precise way to compute \eqref{eq-R-Xdpk}, we now store two technical lemmas
\begin{lemma}
For any $p\in \bbN^*$, $\alpha\in \bbR$ and $k\in\bbN^*$, 
\begin{align}
\label{eq-contraction-g-Gmu}
g^{\mu_{2p-1} \mu_{2p}}\, G_{\mu_1\dots \mu_{2p}} 
&= (\tfrac{1}{2} d+ p - 1) \,G_{\mu_1\dots \mu_{2(p-1)}},
\\
\label{eq-contraction-g-Xmu}
g^{\mu_{2p-1} \mu_{2p}} \,X_{\alpha, k, \mu_1\dots \mu_{2p}} 
&= (\tfrac{1}{2}d + p - 1)\, X_{\alpha, k, \mu_1\dots \mu_{2(p-1)}}.
\end{align}
\end{lemma}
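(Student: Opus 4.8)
The plan is to establish \eqref{eq-contraction-g-Gmu} directly and then read off \eqref{eq-contraction-g-Xmu} as an immediate corollary. For \eqref{eq-contraction-g-Gmu} I would use the Gaussian representation of $G_{\mu_1\dots \mu_{2p}}$ recorded in \eqref{eq-Gmu-permutations}, namely $G_{\mu_1\dots \mu_{2p}} = \tfrac{1}{(2\pi)^{\,d} g_d}\int_{\bbR^d}\dd\xi\,\xi_{\mu_1}\cdots\xi_{\mu_{2p}}\,e^{-g^{\alpha\beta}\xi_\alpha\xi_\beta}$. Contracting the last two indices with $g^{\mu_{2p-1}\mu_{2p}}$ inserts the factor $g^{\mu_{2p-1}\mu_{2p}}\xi_{\mu_{2p-1}}\xi_{\mu_{2p}} = g^{\alpha\beta}\xi_\alpha\xi_\beta$ into the integrand, i.e. exactly the quadratic form sitting in the exponential. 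To generate that factor one uses the rescaling $\xi\mapsto\lambda^{-1/2}\xi$ in $\int_{\bbR^d}\dd\xi\,\xi_{\mu_1}\cdots\xi_{\mu_{2p-2}}\,e^{-\lambda g^{\alpha\beta}\xi_\alpha\xi_\beta}$, which shows this integral equals $\lambda^{-(d/2+p-1)}$ times its value at $\lambda=1$; differentiating in $\lambda$ and evaluating at $\lambda=1$ gives $\int_{\bbR^d}\dd\xi\,\xi_{\mu_1}\cdots\xi_{\mu_{2p-2}}\,(g^{\alpha\beta}\xi_\alpha\xi_\beta)\,e^{-g^{\alpha\beta}\xi_\alpha\xi_\beta} = (\tfrac12 d+p-1)\int_{\bbR^d}\dd\xi\,\xi_{\mu_1}\cdots\xi_{\mu_{2p-2}}\,e^{-g^{\alpha\beta}\xi_\alpha\xi_\beta}$. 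Dividing both sides by $(2\pi)^{\,d} g_d$ and using \eqref{eq-Gmu-permutations} once more on the right-hand side yields \eqref{eq-contraction-g-Gmu}.

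An alternative, purely algebraic route avoids the integral entirely: collecting the $2^p\, p!$ coincident terms in \eqref{eq-Gmu-permutations} rewrites $G_{\mu_1\dots\mu_{2p}} = 2^{-p}\sum_{\pi}\prod_{\{i,j\}\in\pi} g_{\mu_i\mu_j}$, where $\pi$ runs over the perfect matchings of $\{1,\dots,2p\}$. Splitting the sum according to whether the pair $\{2p-1,2p\}$ belongs to $\pi$, the matchings containing it contribute $g^{\mu_{2p-1}\mu_{2p}} g_{\mu_{2p-1}\mu_{2p}} = d$ times a matching of the remaining $2p-2$ indices, which yields $\tfrac12 d\, G_{\mu_1\dots\mu_{2(p-1)}}$; the matchings not containing it have $\{2p-1,2p\}$ attached to two distinct remaining indices, and the contraction fuses the two corresponding $g$-factors into a single one, so that every matching of $\{1,\dots,2p-2\}$ is reached exactly $2(p-1)$ times (once for each of its $p-1$ pairs, with a factor $2$ for the two ways of attaching that pair to $\{2p-1,2p\}$), giving $(p-1)\,G_{\mu_1\dots\mu_{2(p-1)}}$. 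Adding the two contributions reproduces \eqref{eq-contraction-g-Gmu}.

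Finally \eqref{eq-contraction-g-Xmu} follows at once: by the definition \eqref{eq-def-Xmuk} one has $X_{\alpha,k,\mu_1\dots\mu_{2p}} = G_{\mu_1\dots\mu_{2p}}\, X_{\alpha,k}$, and $X_{\alpha,k}$ carries no tangent indices (it only depends on $h=u(x)$), hence $g^{\mu_{2p-1}\mu_{2p}} X_{\alpha,k,\mu_1\dots\mu_{2p}} = \big(g^{\mu_{2p-1}\mu_{2p}} G_{\mu_1\dots\mu_{2p}}\big) X_{\alpha,k}$, and applying \eqref{eq-contraction-g-Gmu} finishes the proof. There is no genuine difficulty here; the only thing demanding a little care is the bookkeeping — either the $\lambda$-rescaling identity for the Gaussian moments, or the counting of perfect matchings in the algebraic variant.
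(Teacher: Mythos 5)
Your proof is correct, and both of your routes differ from the one in the paper. The paper establishes \eqref{eq-contraction-g-Gmu} from the same Gaussian representation, but by passing to spherical coordinates $\xi=s\sigma$ with $s=(g^{\mu\nu}\xi_\mu\xi_\nu)^{1/2}$: the contraction inserts $s^2$ into the radial integral, turning $\int_0^\infty \dd s\, s^{d-1+2(p-1)}e^{-s^2}$ into $\tfrac{1}{2}\Gamma(\tfrac{1}{2}d+p)$, and the factor $(\tfrac{1}{2}d+p-1)$ is then extracted via $\Gamma(z)=(z-1)\Gamma(z-1)$ before reassembling the angular and radial parts into $G_{\mu_1\dots\mu_{2(p-1)}}$. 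Your first variant replaces this by the homogeneity argument $\xi\mapsto\lambda^{-1/2}\xi$ followed by $-\partial_\lambda$ at $\lambda=1$, which produces the same factor with slightly less bookkeeping (no need to split off the sphere integral, only a routine differentiation under the integral sign). Your second variant is genuinely different in kind: it works entirely at the level of the explicit formula \eqref{eq-Gmu-permutations}, rewriting $G$ as a sum over perfect matchings and counting how the contraction $g^{\mu_{2p-1}\mu_{2p}}$ acts on matchings that do or do not contain the pair $\{2p-1,2p\}$; the count $\tfrac{1}{2}d+(p-1)$ comes out of $g^{\mu\nu}g_{\mu\nu}=d$ and the $2(p-1)$-to-$1$ fusion of the remaining matchings. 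This purely algebraic argument buys independence from the Gaussian representation (it would survive in any setting where $G$ is \emph{defined} by the symmetrized product of metrics), at the cost of a combinatorial count that the analytic proofs get for free. The deduction of \eqref{eq-contraction-g-Xmu} from \eqref{eq-def-Xmuk} is identical to the paper's.
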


\begin{proof}
To compute the $\xi$-integration defining $G_{\mu_1\dots \mu_{2p}}$, we use spherical coordinates $\xi=s \sigma$ with $s \vc (g^{\mu\nu}\xi_\mu\xi_\nu)^{1/2}$ and $\sigma = s^{-1} \xi \in S^{d-1}_g$, the unit sphere in ($\bbR^d,g$). Then
\begin{align*}
g^{\mu_{2p-1} \mu_{2p}} \,G_{\mu_1\dots \mu_{2p}} 
&=  \tfrac{1}{(2\pi)^{\,d}\,g_d} \int_{\bbR^d}\! \dd\xi\, \xi_{\mu_1} \cdots \xi_{\mu_{2(p-1)}} (g^{\mu_{2p-1} \mu_{2p}} \xi_{\mu_{2p-1}} \xi_{\mu_{2p}}) \, e^{-g^{\alpha\beta} \xi_\alpha\xi_\beta}
\\
&= \tfrac{1}{(2\pi)^{\,d}\,g_d} \big(\int_{S^{d-1}_g}\! \dd\Omega_g(\sigma)\,\sigma_{\mu_1}\cdots \sigma_{\mu_{2p-1}}\big) \,
\big( \int_0^\infty \!\dd s\, s^{d-1+2p} e^{-s^2} \big)
\\
&= \tfrac{1}{(2\pi)^{\,d}\,g_d} \big(\int_{S^{d-1}_g} \!\dd\Omega_g(\sigma)\,\sigma_{\mu_1}\cdots \sigma_{\mu_{2p-1}}\big) \,
\tfrac{1}{2} \Gamma( \tfrac{1}{2}d + p)
\\
&= \tfrac{1}{(2\pi)^{\,d}\,g_d} \big(\int_{S^{d-1}_g}\! \dd\Omega_g(\sigma)\,\sigma_{\mu_1}\cdots \sigma_{\mu_{2p-1}}\big) \, 
\tfrac{1}{2} ( \tfrac{1}{2}d + p - 1) \Gamma( \tfrac{1}{2}d + p - 1)
\\
&= ( \tfrac{1}{2}d + p - 1 ) \,\tfrac{1}{(2\pi)^{\,d}\,g_d} \,\big(\int_{S^{d-1}_g} \!\dd\Omega_g(\sigma)\,\sigma_{\mu_1}\cdots \sigma_{\mu_{2p-1}}\big) \,
\big( \int_0^\infty \!\dd s\, \,s^{d-1+2(p-1)} \,e^{-s^2} \big)
\\
&= (\tfrac{1}{2}d + p - 1)\, G_{\mu_1\dots \mu_{2(p-1)}}
\end{align*}
and the equality \eqref{eq-contraction-g-Xmu} follows from the definition \eqref{eq-def-Xmuk}.
\end{proof}

The full method to compute $\calR_r$ consists to apply \eqref{eq-fk-nabla-vector-propagation} starting from terms of the form
\begin{equation*}
\tfrac{1}{(2\pi)^{\,d}} \int \!\dd\xi \, \xi_{\mu_1} \cdots \xi_{\mu_{2p}} \, f_k(\xi)[(B_1 \otimes \cdots \otimes B_i \nnu \otimes \cdots \otimes B_k)^{\,\mu_1\dots \mu_{2p}}]
\end{equation*}
generated by the series in \eqref{Volt} (the convention of Remark~\ref{rmk fk convention} is used). Considering the last term in \eqref{eq-fk-nabla-vector-propagation}, the most general expression to start with is (see discussion after Proposition~\ref{lem-fk-nabla-vector-propagation}):
\begin{equation*}
\tfrac{1}{(2\pi)^{\,d}} \int \!\dd\xi \, \xi_{\mu_1} \cdots \xi_{\mu_{2p}} \, f_k(\xi)[(B_1 \otimes \cdots \otimes B_i \nnu \otimes \cdots \otimes B_k)^{\,\mu_1\dots \mu_{2p}}]\, Q[A].
\end{equation*}
Then the LHS of \eqref{eq-fk-nabla-vector-propagation} produces three kinds of terms. The first ones come from the propagation of $\nabla$ on the arguments:
\begin{equation*}
\sum_{j=i+1}^k \tfrac{1}{(2\pi)^{\,d}} \int \!\dd\xi \, \xi_{\mu_1} \cdots \xi_{\mu_{2p}} \, f_k(\xi)[(B_1 \otimes \cdots \otimes (\nnu B_j)\otimes \cdots \otimes  B_k)^{\,\mu_1\dots \mu_{2p}}]\, Q[A].
\end{equation*}
The second ones consist of adding $-\nnu \,H =- ( \nnu\, H^{\alpha\beta} )\,\xi_\alpha \xi_\beta$ as an argument after $B_i$, so we get:
\begin{equation*}
- \sum_{j=i}^k \tfrac{1}{(2\pi)^{\,d}} \int \!\dd\xi \, \xi_{\mu_1} \cdots \xi_{\mu_{2p}} \xi_\alpha \xi_\beta\, f_{k+1}(\xi)[(B_1 \otimes \cdots \otimes B_j \otimes  (\nnu\, H^{\alpha\beta}) \otimes B_{j+1} \otimes \cdots \otimes  B_k)^{\,\mu_1\dots \mu_{2p}}]\, Q[A].
\end{equation*}
The third ones modify the matrix-valued polynomials $Q[A]$ as:
\begin{equation*}
\tfrac{1}{(2\pi)^{\,d}} \int \!\dd\xi \, \xi_{\mu_1} \cdots \xi_{\mu_{2p}} \, f_k(\xi)[(B_1 \otimes \cdots \otimes B_i \nnu \otimes \cdots \otimes B_k)^{\,\mu_1\dots \mu_{2p}}] \,(\pnu + A_\nu) \,Q[A].
\end{equation*}
Replacing the $f_k$'s and the integrations along $\xi$ with the $X_{d/2+p, k, \mu_1\dots \mu_{2p}}$ as in \eqref{eq-fk-Xmuk}, we finally obtain
\begin{multline}
\label{eq-Xmuk-derivation}
X_{d/2+p, k, \mu_1\dots \mu_{2p}}[(B_1 \otimes \cdots \otimes B_i \nnu \otimes \cdots \otimes B_k)^{\,\mu_1\dots \mu_{2p}}] \,Q[A]
\\
= \begin{aligned}[t]
&+ \sum_{j=i+1}^k X_{d/2+p, k, \mu_1\dots \mu_{2p}}[(B_1 \otimes \cdots \otimes (\nnu\, B_j)\otimes \cdots \otimes  B_k)^{\,\mu_1\dots \mu_{2p}}] \,Q[A]
\\
& - \sum_{j=i}^k X_{d/2+p+1, k+1, \mu_1\dots \mu_{2(p+1)}}[(B_1 \otimes \cdots \otimes B_j \otimes (\nnu \,H^{\,\mu_{2p+1}\mu_{2p+2}}) \otimes \cdots \otimes  B_k)^{\,\mu_1\dots \mu_{2p}}]\, Q[A]
\\
& + X_{d/2+p, k, \mu_1\dots \mu_{2p}}[(B_1 \otimes \cdots \otimes B_k)^{\,\mu_1\dots \mu_{2p}}] (\pnu + A_\nu) \,Q[A].
\end{aligned}
\end{multline}
This relation is the “integrated” version of \eqref{eq-fk-nabla-vector-propagation}. One has to look inside as the tensor product over the field $\bbC$ and not over functions. In other words, \emph{it is necessary to keep the functions $g^{\alpha\beta}$ and their derivatives in front of their arguments in the tensor product until all the derivations in the arguments have propagated}. Working with the matrix-valued functions $\nnu \,H^{\mu_{2p+1}\mu_{2p+2}}$ prevents this temptation.

From this result, we then get the following simplification in the computation of $\calR_r$:
\begin{proposition}
\label{prop-simplified-computation}
All the terms of \eqref{eq-R-Xdpk} can be obtained starting from their analogs in \eqref{Volt} with the replacement of operators $f_k$ by $X_{d/2+p, k, \mu_1\dots \mu_{2p}}$ as in \eqref{eq-fk-Xmuk} and iteratively applying the rule \eqref{eq-Xmuk-derivation}. At the end of this iteration, one can use \eqref{eq-contraction-g-Xmu} to deal with any remaining function $g^{\alpha\beta}$ together with normal coordinates to deal with derivatives of the metric.
\end{proposition}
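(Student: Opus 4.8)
The plan is to assemble the tools already set up and to check that the resulting procedure terminates. First I would start from the Volterra expansion \eqref{Volt} and the explicit formulas for $a_r(a,P)(x)$ written just below it: each $a_r$ is a finite linear combination of terms $f_k(\xi)[B_1\otimes\cdots\otimes B_k]$ in which every $B_i$ equals $K$ or $P$ and is therefore, by \eqref{def-PHLq} and \eqref{eq-def-K-up}, a matrix-valued differential operator in $\nabla$ whose coefficients are polynomial in $\xi$, say $B_i=\sum B_i^{\mu_1\cdots\mu_{\ell_i}}\,\xi_{\mu_1}\cdots\xi_{\mu_{\ell_i}}$ with $\xi$-independent $B_i^{\cdots}$. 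Pulling the explicit $\xi$-monomials to the front and performing the Gaussian integration as in \eqref{eq-integral of fk}, only monomials of even total degree $\xi_{\mu_1}\cdots\xi_{\mu_{2p}}$ survive, and by \eqref{eq-fk-Xmuk} the $\xi$-integral of $\xi_{\mu_1}\cdots\xi_{\mu_{2p}}f_k(\xi)$ equals $g_d\,X_{d/2+p,k,\mu_1\cdots\mu_{2p}}$; together with $\abs{g}^{-1/2}g_d=(4\pi)^{-d/2}$ this is precisely \eqref{eq-R-Xdpk} for the initial, un-propagated terms.

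Next I would deal with the covariant derivatives still carried by the arguments of those $X$'s. A trailing $\nabla_\nu$ is moved to the right by Proposition~\ref{lem-fk-nabla-vector-propagation} at the level of $f_k(\xi)$; integrating that identity against $\xi_{\mu_1}\cdots\xi_{\mu_{2p}}$ --- an absolutely convergent Gaussian which commutes with the operators $\pi_f$ and their finite differences by \eqref{eq-pi integral}, exactly as in the passage from \eqref{eq-fk-nabla-vector-propagation} to \eqref{eq-Xmuk-derivation} --- gives the integrated propagation rule \eqref{eq-Xmuk-derivation}, the two extra factors $\xi_{\mu_{2p+1}}\xi_{\mu_{2p+2}}$ produced by $\nabla_\nu H=(\nabla_\nu H^{\alpha\beta})\,\xi_\alpha\xi_\beta$ being responsible for the shift $(p,k)\mapsto(p+1,k+1)$ and $d/2+p\mapsto d/2+p+1$. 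One then applies \eqref{eq-Xmuk-derivation} iteratively. To argue termination I would note that the covariant derivatives occurring in the arguments are finite in number --- and, for $a_r$, their total is bounded in terms of $r$ --- and that each of them is carried to the right in finitely many steps until it acts on the locally constant section $v$, where $\nabla_\nu v=A_\nu v$; the only branching is the commutation of two covariant derivatives, which by \eqref{eq-commutator of hnabla} produces the curvature $F_{\mu\nu}$ and strictly lowers the number of remaining derivatives, while no step introduces a new derivative to propagate (differentiating a coefficient or inserting $\nabla_\nu H^{\alpha\beta}$ yields only fully-evaluated matrix functions, which assemble, as explained after Proposition~\ref{lem-fk-nabla-vector-propagation}, into the gauge polynomials $Q[A]$). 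After finitely many steps one is left with a finite sum of terms $X_{d/2+p,k,\mu_1\cdots\mu_{2p}}[C_1\otimes\cdots\otimes C_k]\,Q[A]$ whose arguments $C_i$ are genuine matrix-valued functions built from $u,p^\mu,q$ and their covariant derivatives, the metric and the Christoffel symbols.

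Finally I would absorb the surviving metric factors: any index pair $g^{\mu_{2s-1}\mu_{2s}}$ contracted with the indices of an $X_{d/2+p,k,\mu_1\cdots\mu_{2p}}$ (coming from the $G$-prefactor, from $H^{\mu\nu}=g^{\mu\nu}u$, or from $\nabla_\nu H^{\alpha\beta}$, $L^\mu$, $\ldots$) is eliminated by \eqref{eq-contraction-g-Xmu}, lowering $p$ by one; and since $\calR_r(x)$ is a coordinate-independent section of $\End(V)$, I may evaluate at the pinned point $x$ in a normal coordinate system, where $g_{\alpha\beta}=\delta_{\alpha\beta}$, the Christoffel symbols vanish, and the remaining derivatives of $g$ and $\Gamma$ turn into curvature tensors through \eqref{eq-partial0gq}--\eqref{eq-partial1ginvq} and the formulas of Section~\ref{Covariant derivatives and normal coordinates}. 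This delivers the announced algorithm for $\calR_r$.

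The hard part will be the termination argument: one must exhibit a well-founded complexity measure that \eqref{eq-Xmuk-derivation} strictly decreases, even though the $\nabla_\nu B_j$ terms relocate derivatives rather than obviously destroy them. The key observations --- that no step creates a new derivative to propagate, that a curvature commutator strictly lowers the derivative count, and that propagation is monotone towards the right end where it stops on $v$ --- provide such a measure; the rest is bookkeeping already licensed by Propositions~\ref{prop-Propagation} and \ref{lem-fk-nabla-vector-propagation} and by the contraction lemma \eqref{eq-contraction-g-Xmu}.
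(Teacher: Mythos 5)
Your proposal is correct and follows essentially the same route as the paper, which states this proposition as a summary of the derivation of \eqref{eq-Xmuk-derivation} immediately preceding it; your added termination argument is sound, since each application of \eqref{eq-Xmuk-derivation} eliminates exactly one covariant derivative from the argument slots and creates none (the inserted $\nabla_\nu H^{\alpha\beta}$ and the differentiated coefficients are plain matrix-valued functions, and the derivatives landing on $v$ merely build up $Q[A]$). One small correction: no commutator of covariant derivatives is taken during the iteration itself --- the curvature $F_{\mu\nu}$ via \eqref{eq-commutator of hnabla} only enters afterwards, when the accumulated polynomials $Q[A]$ are reorganized into gauge-homogeneous expressions --- so that part of your well-foundedness discussion is superfluous rather than load-bearing.
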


In order to simplify the computation, we can omit the operators $X_{d/2+p, k, \mu_1\dots \mu_{2p}}$ in \eqref{eq-Xmuk-derivation} and work directly at the level of their arguments. We will also make use of two other useful symbolic notations. \\
Let us adopt the notation $\leadsto$ to express the development of arguments induced by \eqref{eq-Xmuk-derivation}. In order to take into account the presence of the matrix-valued polynomials $Q[A]$ (appearing in \eqref{eq-Xmuk-derivation}) multiplied on the right of the application of $X_{d/2+p, k, \mu_1\dots \mu_{2p}}$ on the arguments, we denote its presence with the separation symbol $\mid$, except if $Q[A] = \bbbone$. 

Then the computation consists to perform the propagation of all the derivatives by applying, as many times as necessary, the following symbolic rule: If $B_i$, $1\leq i\leq k$, are $k$ matrix-valued differential operators in $\nmu$ depending on $x$ and independent of $\xi$, then,
\begin{align}
\label{eq-Xmuk-arguments-derivation}
(B_1 \otimes \cdots \otimes B_i \nnu \otimes \cdots \otimes B_k)^{\,\mu_1\dots \mu_{2p}} \mid Q[A]
\leadsto
&+ \sum_{j=i+1}^k (B_1 \otimes \cdots \otimes (\nnu \,B_j)\otimes \cdots \otimes  B_k)^{\,\mu_1\dots \mu_{2p}} \mid Q[A]
\\
& - \sum_{j=i}^k (B_1 \otimes \cdots \otimes B_j \otimes (\nnu \,H^{\,\mu_{2p+1}\mu_{2p+2}}) \otimes B_{j+1} \otimes \cdots \otimes  B_k)^{\,\mu_1\dots \mu_{2p}} \mid Q[A] \nonumber
\\
& + (B_1 \otimes \cdots \otimes B_i \otimes \cdots \otimes B_k)^{\,\mu_1\dots \mu_{2p}} \mid \nnu \,Q[A]. \nonumber
\end{align}
Once this rule has been used, the operators $X_{d/2+p, k, \mu_1\dots \mu_{2p}}$ to apply on each argument in the obtained sum are uniquely determined by the number of free indices $\mu_i$ and the number of arguments in tensor products.

\begin{remark}
When $u=\bbbone$, the previous formula \eqref{eq-Xmuk-arguments-derivation} cannot be simplified because $H^{\mu\nu}=g^{\mu\nu}\,\bbbone$ commutes with matrix-valued functions but not with some differential operators $\nabla_\nu$ possibly contained in some $B_\ell$ for $1\leq \ell \leq i$. This implies in particular that we can not hope for simplifications at this computation stage even if $u=\bbbone$, and the number of terms produced by  successive applications of \eqref{eq-Xmuk-arguments-derivation} is independent of the exact form of $H^{\mu\nu}$. Only subsequent computations can use some hypothesis on $u$ for simplifications.
\end{remark}

\begin{remark}
It is tempting in the previous method to start with $P$ written in terms of $\hnabla$ as in \eqref{eq-P-upq-hatnabla} and to propagate $\hnabla_\mu$ instead of $\nabla_\mu$. But this requires to get an analogue of formula \eqref{eq-P and exp} which needs to make sense of $\hnabla_\mu \xi_\nu$: the variables $\xi_\nu$ are the Fourier dual of the variables $x^\nu$ and, even if they carry a space index, they are not the component of a tensor field on $M$. While here these variables $\xi_\nu$ are silent since only confined in the $G$-tensors (as a consequence of $\nabla_\mu \xi_\nu = 0$), they would have to remain both in the $B_j$ and $H^{\mu\nu}$  to be sensitive to the action of $\hnabla$ in \eqref{eq-Xmuk-arguments-derivation}, thus generating more terms. This would give directly the result in terms of $\hnabla$ while here, we are obliged to exchange with some efforts the $\nabla$ with $\hnabla$ (in normal coordinates) at the end.
\end{remark}

\section{\texorpdfstring{Results on $\calR_0$ and $\calR_2$}{Results on R0 and R2}}
\label{Results on R2}

After the direct result on $\calR_0$, this section is devoted to a complete computation of $\calR_2$ via the above method and is a more algebraic version than the similar result obtained in \cite[Thm~2.4]{IochMass17c}.

The computation of $\calR_0$ is straighforward thanks to \eqref{eq-Xk for u diagonal}:

\begin{lemma}
\begin{align*}
(4\pi)^{\,d/2}\calR_0 = X_{d/2, 0}[\bbbone] = u^{-d/2}.
\end{align*}
\end{lemma}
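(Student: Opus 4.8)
The plan is to unwind the definitions and use the fact that $\calR_0$ is the $k=0$ term of the Volterra expansion, where no derivatives are present and the universal operators act diagonally.

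First I would recall from the asymptotic expansion that $(4\pi)^{d/2}\calR_0$ is, up to the normalization $\abs{g}^{-1/2} g_d = (2^d \pi^{d/2})^{-1} = (4\pi)^{-d/2}$, the $r=0$ contribution coming solely from $f_0[1]$ in the series \eqref{Volt}. By \eqref{eq-R-Xdpk} with $p=0$, $k=0$ and argument $\bbbone$, this reads $(4\pi)^{d/2}\calR_0 = X_{d/2,0}[\bbbone]$, where here $h = u(x)$. There are no $\xi$-factors (so $p=0$ and $G = 1$) and no covariant derivatives to propagate, so the propagation rule \eqref{eq-Xmuk-arguments-derivation} is never invoked; the term is already in final form.

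Next I would evaluate $X_{d/2,0}[\bbbone]$ directly from the definition \eqref{eq-def-Xalphak}, namely $X_{\alpha,0} = \pi_{I_{\alpha,0}}$ together with \eqref{eq-def I k=0}, which gives $\pi_f[\lambda] = \lambda f(h)$ for $k=0$. With $f = I_{\alpha,0}$ and $I_{\alpha,0}(r_0) = r_0^{-\alpha}$ from \eqref{def I0}, functional calculus on $h = u$ yields $X_{\alpha,0}[\bbbone] = u^{-\alpha}$. Setting $\alpha = d/2$ gives $X_{d/2,0}[\bbbone] = u^{-d/2}$. Alternatively, one can quote \eqref{eq-Xk for u diagonal} or \eqref{eq-X on only h} in the degenerate case $k=0$, where the empty product of $b_\ell$'s is $\bbbone$ and the factor $1/k! = 1$, giving the same result $h^{-\alpha}$ with $h=u$, $\alpha = d/2$.

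There is essentially no obstacle here: the only point requiring a moment's care is bookkeeping the normalization constant, i.e. checking that the $(4\pi)^{d/2}$ on the left matches the $\abs{g}^{-1/2} g_d$ prefactor that was absorbed into $\calR_r$ in \eqref{eq-R-Xdpk}; this is exactly the identity $\abs{g}^{-1/2} g_d = (4\pi)^{-d/2}$ already recorded after \eqref{eq-R-Xdpk}. Once that is noted, the computation is immediate from \eqref{eq-Xk for u diagonal}.
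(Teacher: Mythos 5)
Your proposal is correct and follows essentially the same route as the paper, which simply invokes \eqref{eq-Xk for u diagonal} in the degenerate case $k=0$; you merely spell out the normalization $\abs{g}^{-1/2} g_d = (4\pi)^{-d/2}$ and the $k=0$ functional calculus $\pi_{I_{d/2,0}}[\bbbone] = u^{-d/2}$ that the paper leaves implicit. No gaps.
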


We plane to follow Proposition~\ref{prop-simplified-computation} to compute $\calR_2$ in $a_2(a,P)(x) = \tr\,[ a(x) \,\calR_2(x) ]$ and start with
\begin{align*}
(4\pi)^{\,d/2}\calR_2
&= 
X_{d/2+1, 2, \mu_1\mu_2}[K^{\mu_1} \otimes K^{\mu_2}] - X_{d/2, 1}[P]
.
\end{align*}
We perform the computation at the level of arguments using \eqref{eq-Xmuk-arguments-derivation}. Notice that all the used spectral operators $X_{\alpha, k}$ (and those appearing in the final result) are in the series $X_{d/2, 1}, X_{d/2+1, 2}, \dots, X_{d/2+k-1, k}$.

Consider first $-X_{d/2, 1}[P]$:
\begin{align*}
-P
& = 
H^{\nu_1\nu_2} \nabla^2_{\nu_1\nu_2} 
+ L^{\nu_1} \nabla_{\nu_1} 
+ q
\\
&\leadsto
\begin{aligned}[t]
&
- H^{\nu_1\nu_2} \nabla_{\!\nu_1} \otimes (\nabla_{\!\nu_2} H^{\mu_1\mu_2})
+ H^{\nu_1\nu_2} \nabla_{\!\nu_1} \mid A_{\nu_2}
- L^{\nu_1} \otimes (\nabla_{\nu_1} H^{\mu_1\mu_2})
+ L^{\nu_1} \mid A_{\nu_2}
+ q
\end{aligned}
\\
&\leadsto
\begin{aligned}[t]
&
- H^{\nu_1\nu_2} \otimes (\nabla^2_{\!\nu_1\nu_2} H^{\mu_1\mu_2})
+ H^{\nu_1\nu_2} \otimes (\nabla_{\!\nu_1} H^{\mu_3\mu_4}) \otimes (\nabla_{\!\nu_2} H^{\mu_1\mu_2})
+ H^{\nu_1\nu_2} \otimes (\nabla_{\!\nu_2} H^{\mu_1\mu_2}) \otimes (\nabla_{\!\nu_1} H^{\mu_3\mu_4})
\\
&
- H^{\nu_1\nu_2} \otimes (\nabla_{\!\nu_2} H^{\mu_1\mu_2}) \mid A_{\nu_1}
- H^{\nu_1\nu_2} \otimes (\nabla_{\!\nu_1} H^{\mu_1\mu_2}) \mid A_{\nu_2}
+ H^{\nu_1\nu_2} \mid (\partial_{\!\nu_1} A_{\nu_2} + A_{\nu_1} A_{\nu_2})
\\
&
- L^{\nu_1} \otimes (\nabla_{\!\nu_1} H^{\mu_1\mu_2})
+ L^{\nu_1} \mid A_{\nu_1}
+ q
\end{aligned}
\\
&\tonc
\begin{aligned}[t]
&
- g^{\nu_1\nu_2} g^{\mu_1\mu_2}\, u \otimes (\nabla^2_{\!\nu_1\nu_2} u)
+ \tfrac{1}{3} g^{\nu_1\nu_2} \big( \sumperm_{\nu_1, \nu_2} \tensor{R}{_{\nu_1}^{\mu_1}_{\, \nu_2}^{\mu_2}} \big)\, u \otimes u
+ 2 g^{\nu_1\nu_2} g^{\mu_1\mu_2} g^{\mu_3\mu_4}  \, u \otimes (\nabla_{\! \nu_1} u) \otimes (\nabla_{\! \nu_2} u)
\\
&
- 2 g^{\nu_1\nu_2} g^{\mu_1\mu_2} \, u \otimes (\nabla_{\!\nu_1} u) \mid A_{\nu_2}
+ g^{\nu_1\nu_2} \, u \mid (\partial_{\nu_1} A_{\nu_2} + A_{\nu_1} A_{\nu_2})
\\
&
- g^{\mu_1\mu_2} \, p^{\nu_1} \otimes (\nabla_{\!\nu_1} u)
- g^{\nu_1\sigma_1} g^{\mu_1\mu_2} \, (\nabla_{\!\sigma_1} u) \otimes (\nabla_{\!\nu_1} u)
+L^{\nu_1} \mid A_{\nu_1}
+ q.
\end{aligned}
\end{align*}

Similarly for $X_{d/2+1, 2, \mu_1\mu_2}[K^{\mu_1} \otimes K^{\mu_2}]$:
\begin{align*}
K^{\mu_1} \otimes K^{\mu_2}
& = 
- (L^{\mu_1} + 2 H^{\mu_1\nu_1} \nabla_{\!\nu_1}) \otimes (L^{\mu_2} + 2 H^{\mu_2\nu_2} \nabla_{\!\nu_2})
\\
& =
\begin{aligned}[t]
&
- L^{\mu_1} \otimes L^{\mu_2}
- 2 L^{\mu_1} \otimes H^{\mu_2\nu_2} \nabla_{\nu_2}
- 2 H^{\mu_1\nu_1} \nabla_{\!\nu_1} \otimes L^{\mu_2}
- 4 H^{\mu_1\nu_1} \nabla_{\!\nu_1} \otimes H^{\mu_2\nu_2} \nabla_{\!\nu_2}
\end{aligned}
\\
&\leadsto
\begin{aligned}[t]
&
- L^{\mu_1} \otimes L^{\mu_2}
+ 2 L^{\mu_1} \otimes H^{\mu_2\nu_2} \otimes (\nabla_{\!\nu_2} H^{\mu_3\mu_4})
- 2 L^{\mu_1} \otimes H^{\mu_2\nu_2} \mid A_{\nu_2}
\\
&- 2 H^{\mu_1\nu_1} \otimes (\nabla_{\!\nu_1} L^{\mu_2})
+ 2 H^{\mu_1\nu_1} \otimes (\nabla_{\!\nu_1} H^{\mu_3\mu_4}) \otimes L^{\mu_2}
\\
&+ 2 H^{\mu_1\nu_1} \otimes L^{\mu_2} \otimes (\nabla_{\!\nu_1} H^{\mu_3\mu_4})
- 2 H^{\mu_1\nu_1} \otimes L^{\mu_2} \mid A_{\nu_1}
\\
&
+ 4 H^{\mu_1\nu_1} \nabla_{\!\nu_1} \otimes H^{\mu_2\nu_2} \otimes (\nabla_{\! \nu_2} H^{\mu_3\mu_4})
- 4 H^{\mu_1\nu_1} \nabla_{\!\nu_1} \otimes H^{\mu_2\nu_2} \mid A_{\nu_2}
\end{aligned}
\\
&\leadsto
\begin{aligned}[t]
&
- L^{\mu_1} \otimes L^{\mu_2}
+ 2 L^{\mu_1} \otimes H^{\mu_2\nu_2} \otimes (\nabla_{\!\nu_2} H^{\mu_3\mu_4})
- 2 L^{\mu_1} \otimes H^{\mu_2\nu_2} \mid A_{\nu_2}
\\
&
- 2 H^{\mu_1\nu_1} \otimes (\nabla_{\!\nu_1} L^{\mu_2})
+ 2 H^{\mu_1\nu_1} \otimes (\nabla_{\!\nu_1} H^{\mu_3\mu_4}) \otimes L^{\mu_2}
+ 2 H^{\mu_1\nu_1} \otimes L^{\mu_2} \otimes (\nabla_{\!\nu_1} H^{\mu_3\mu_4})
- 2 H^{\mu_1\nu_1} \otimes L^{\mu_2} \mid A_{\nu_1}
\\
&
+ 4 H^{\mu_1\nu_1} \otimes (\nabla_{\!\nu_1} H^{\mu_2\nu_2}) \otimes (\nabla_{\!\nu_2} H^{\mu_3\mu_4}) %
+ 4 H^{\mu_1\nu_1} \otimes H^{\mu_2\nu_2} \otimes (\nabla^2_{\!\nu_1\nu_2} H^{\mu_3\mu_4}) %
\\
&
- 4 H^{\mu_1\nu_1} \otimes (\nabla_{\!\nu_1} H^{\mu_5\mu_6}) \otimes H^{\mu_2\nu_2} \otimes  (\nabla_{\!\nu_2} H^{\mu_3\mu_4}) 
- 4 H^{\mu_1\nu_1} \otimes H^{\mu_2\nu_2} \otimes (\nabla_{\!\nu_1} H^{\mu_5\mu_6}) \otimes  (\nabla_{\!\nu_2} H^{\mu_3\mu_4}) 
\\
&
- 4 H^{\mu_1\nu_1} \otimes H^{\mu_2\nu_2} \otimes  (\nabla_{\!\nu_2} H^{\mu_3\mu_4}) \otimes (\nabla_{\!\nu_1} H^{\mu_5\mu_6}) 
+ 4 H^{\mu_1\nu_1} \otimes H^{\mu_2\nu_2} \otimes  (\nabla_{\!\nu_2} H^{\mu_3\mu_4}) \mid A_{\nu_1}
\\
&
- 4 H^{\mu_1\nu_1} \otimes (\nabla_{\!\nu_1} H^{\mu_2\nu_2}) \mid A_{\nu_2}
+ 4 H^{\mu_1\nu_1} \otimes (\nabla_{\!\nu_1} H^{\mu_3\mu_4}) \otimes H^{\mu_2\nu_2} \mid A_{\nu_2}
\\
&
+ 4 H^{\mu_1\nu_1} \otimes H^{\mu_2\nu_2} \otimes (\nabla_{\!\nu_1} H^{\mu_3\mu_4}) \mid A_{\nu_2}
- 4 H^{\mu_1\nu_1} \otimes H^{\mu_2\nu_2} \mid (\partial_{\!\nu_1} A_{\nu_2} + A_{\nu_1} A_{\nu_2})
\end{aligned}
\\
&\tonc
\begin{aligned}[t]
& 
- p^{\mu_1} \otimes p^{\mu_2}
- g^{\mu_2\sigma_2} \, p^{\mu_1} \otimes (\nabla_{\!\sigma_2} u)
- g^{\mu_1\sigma_1} \, (\nabla_{\sigma_1} u) \otimes p^{\mu_2}
- g^{\mu_1\sigma_1} g^{\mu_2\sigma_2} \,(\nabla_{\!\sigma_1} u) \otimes (\nabla_{\!\sigma_2} u)
\\
&
+ 2 g^{\mu_2\nu_2} g^{\mu_3\mu_4}\, p^{\mu_1} \otimes u \otimes (\nabla_{\!\nu_2} u)
+ 2 g^{\mu_1\sigma_1} g^{\mu_2\nu_2} g^{\mu_3\mu_4}\, (\nabla_{\!\sigma_1} u) \otimes u \otimes (\nabla_{\!\nu_2} u)
- 2 g^{\mu_2\nu_2} \, L^{\mu_1} \otimes u \mid A_{\nu_2}
\\
& 
- 2 g^{\mu_1\nu_1} \, u \otimes (\nabla_{\!\nu_1} p^{\mu_2})
- 2 g^{\mu_1\nu_1} g^{\mu_2\sigma_2} \, u \otimes (\nabla^2_{\!\nu_1\sigma_2} u)
+ \tfrac{4}{3} g^{\mu_1\nu_1} \tensor{\Ric}{_{\nu_1}^{\mu_2}} \, u \otimes u
\\
&
+ 2 g^{\mu_1\nu_1} g^{\mu_3\mu_4} \, u \otimes (\nabla_{\!\nu_1} u) \otimes p^{\mu_2}
+ 2 g^{\mu_1\nu_1} g^{\mu_3\mu_4} g^{\mu_2\sigma_2} \, u \otimes (\nabla_{\!\nu_1} u) \otimes (\nabla_{\!\sigma_2} u)
\\
&
+ 2 g^{\mu_1\nu_1} g^{\mu_3\mu_4} \, u \otimes p^{\mu_2} \otimes (\nabla_{\!\nu_1} u)
+ 2 g^{\mu_1\nu_1} g^{\mu_2\sigma_2} g^{\mu_3\mu_4}\, u \otimes (\nabla_{\!\sigma_2} u) \otimes (\nabla_{\!\nu_1} u)
- 2 g^{\mu_1\nu_1} \, u \otimes L^{\mu_2} \mid A_{\nu_1}
\\
&
+ 4 g^{\mu_1\nu_1} g^{\mu_2\nu_2} g^{\mu_3\mu_4}\, u \otimes (\nabla_{\!\nu_1} u) \otimes (\!\nabla_{\nu_2} u)
+ 4 g^{\mu_1\nu_1} g^{\mu_2\nu_2} g^{\mu_3\mu_4} \, u \otimes u \otimes (\nabla^2_{\!\nu_1\nu_2} u)
\\
&
- \tfrac{4}{3} g^{\mu_1\nu_1} g^{\mu_2\nu_2} \big( \sumperm_{\nu_1, \nu_2} \tensor{R}{_{\nu_1}^{\mu_3}_{\, \nu_2}^{\mu_4}} \big) \, u \otimes u \otimes u
- 4 g^{\mu_1\nu_1} g^{\mu_5\mu_6} g^{\mu_2\nu_2} g^{\mu_3\mu_4} \, u \otimes (\nabla_{\!\nu_1} u) \otimes u \otimes  (\nabla_{\!\nu_2} u)
\\
&
- 4 g^{\mu_1\nu_1} g^{\mu_2\nu_2} g^{\mu_5\mu_6} g^{\mu_3\mu_4} \, u \otimes u \otimes (\nabla_{\!\nu_1} u) \otimes  (\nabla_{\!\nu_2} u)
- 4 g^{\mu_1\nu_1} g^{\mu_2\nu_2} g^{\mu_3\mu_4} g^{\mu_5\mu_6} \, u \otimes u \otimes  (\nabla_{\!\nu_2} u) \otimes (\nabla_{\!\nu_1} u)
\\
&
+ 4 g^{\mu_1\nu_1} g^{\mu_2\nu_2} g^{\mu_3\mu_4} \, u \otimes u \otimes  (\nabla_{\!\nu_2} u) \mid A_{\nu_1}
-4 g^{\mu_1\nu_1} g^{\mu_2\nu_2} \, u \otimes (\nabla_{\!\nu_1} u) \mid A_{\nu_2}
\\
&
+ 4 g^{\mu_1\nu_1} g^{\mu_3\mu_4} g^{\mu_2\nu_2} \, u \otimes (\nabla_{\!\nu_1} u) \otimes u \mid A_{\nu_2}
+ 4 g^{\mu_1\nu_1} g^{\mu_2\nu_2} g^{\mu_3\mu_4} \, u \otimes u \otimes  (\nabla_{\!\nu_1} u) \mid A_{\nu_2}
\\
&
- 4 g^{\mu_1\nu_1} g^{\mu_2\nu_2} \, u \otimes u \mid (\partial_{\nu_1} A_{\nu_2} + A_{\nu_1} A_{\nu_2}).
\end{aligned}
\end{align*}

We can now apply the operators $X_{d/2, 1}$, $X_{d/2+1, 2, \mu_1\mu_2}$, $X_{d/2+2, 3, \mu_1\mu_2\mu_3\mu_4}$ and $X_{d/2+3, 4, \mu_1\mu_2\mu_3\mu_4\mu_5\mu_6}$ according to the obtained arguments, use \eqref{eq-contraction-g-Xmu} when necessary, and finally collect all the terms.

Let us first collect the terms with $Q[A] \neq \bbbone$ and show that they all cancel thanks to \eqref{eq-Xu}. \\
The terms with $(\partial_{\nu_1} A_{\nu_2} + A_{\nu_1} A_{\nu_2})$ do not contribute:
\begin{align*}
g^{\nu_1\nu_2} X_{d/2, 1}[u] - 4 g^{\mu_1\nu_1} g^{\mu_2\nu_2} X_{d/2+1, 2, \mu_1\mu_2}[u \otimes u]
&=
g^{\nu_1\nu_2} \big( X_{d/2, 1}[u] - 2 X_{d/2+1, 2}[u \otimes u] \big)
= 0.
\end{align*}
The sum of terms containing $L^\mu$ and $A_\nu$ vanishes:
\begin{multline*}
\big( 
X_{d/2, 1}[L^{\nu_1}]  
- 2 g^{\mu_2\nu_1} X_{d/2+1, 2, \mu_1\mu_2}[L^{\mu_1} \otimes u] 
- 2 g^{\mu_1\nu_1} X_{d/2+1, 2, \mu_1\mu_2}[u \otimes L^{\mu_2}] 
\big) \,A_{\nu_1}
\\
\begin{aligned}[t]
& = \big( 
X_{d/2, 1}[L^{\nu_1}]
- X_{d/2+1, 2}[L^{\nu_1} \otimes u]
- X_{d/2+1, 2}[u \otimes L^{\nu_1}]
\big) \,A_{\nu_1}
= 0.
\end{aligned}
\end{multline*}
Finally the contribution of terms containing $\nnu \,u$ and $A_\nu$ also vanishes:
\begin{multline*}
\begin{multlined}[t]
\big(
 - 2 g^{\nu_1\nu_2} g^{\mu_1\mu_2} X_{d/2+1, 2, \mu_1\mu_2}[u \otimes (\nabla_{\!\nu_1} u)] 
- 4 g^{\mu_1\nu_1} g^{\mu_2\nu_2} X_{d/2+1, 2, \mu_1\mu_2}[u \otimes (\nabla_{\!\nu_1} u)] 
\\
 + 4 g^{\mu_1\nu_1} g^{\mu_2\nu_2} g^{\mu_3\mu_4} X_{d/2+2, 3, \mu_1\mu_2\mu_3\mu_4}[u \otimes (\nabla_{\!\nu_1} u) \otimes u]
 + 8 g^{\mu_1\nu_1} g^{\mu_2\nu_2} g^{\mu_3\mu_4} X_{d/2+2, 3, \mu_1\mu_2\mu_3\mu_4}[u \otimes u \otimes (\nabla_{\!\nu_1} u)]
\big)\, A_{\nu_2}\!
\end{multlined}
\\
\begin{aligned}[t]
= g^{\nu_1\nu_2} (d + 2) \big(
		\begin{aligned}[t]
			& -  X_{d/2+1, 2}[u \otimes (\nabla_{\!\nu_1} u)]
			 + X_{d/2+2, 3}[u \otimes (\nabla_{\nu_1} u )\otimes u]
				+ 2 X_{d/2+2, 3}[u \otimes u \otimes (\nabla_{\!\nu_1} u)]
			\big)\, A_{\nu_2}= 0.
		\end{aligned}	
\end{aligned}
\end{multline*}
There are 3 terms which contain tensor products of $u$ only:
\begin{align*}
&+\tfrac{1}{3} \,g^{\nu_1\nu_2} \big( \sumperm_{\nu_1, \nu_2} \tensor{R}{_{\nu_1}^{\mu_1}_{\, \nu_2}^{\mu_2}} \big) \,
X_{d/2+1, 2, \mu_1\mu_2}[u \otimes u]
= \tfrac{1}{6} \,g_{\mu_1\mu_2} g^{\nu_1\nu_2} \big( \sumperm_{\nu_1, \nu_2} \tensor{R}{_{\nu_1}^{\mu_1}_{\, \nu_2}^{\mu_2}} \big) \, X_{d/2+1, 2}[u \otimes u]
= -\tfrac{1}{3}\, \SC \, X_{d/2+1, 2}[u \otimes u],
\\[3mm]
&+\tfrac{4}{3} \,g^{\mu_1\nu_1} \tensor{\Ric}{_{\nu_1}^{\mu_2}} \, X_{d/2+1, 2, \mu_1\mu_2}[u \otimes u]
= \tfrac{2}{3}\, g_{\mu_1\mu_2}  g^{\mu_1\nu_1} \tensor{\Ric}{_{\nu_1}^{\mu_2}} \, X_{d/2+1, 2}[u \otimes u]
= \tfrac{2}{3}\, \SC \, X_{d/2+1, 2}[u \otimes u],
\\[3mm]
&- \tfrac{4}{3} \,g^{\mu_1\nu_1} g^{\mu_2\nu_2} \big( \sumperm_{\nu_1, \nu_2} \tensor{R}{_{\nu_1}^{\mu_3}_{\, \nu_2}^{\mu_4}} \big) \,X_{d/2+2, 3, \mu_1\mu_2\mu_3\mu_4}[u \otimes u \otimes u]
= - \tfrac{4}{3} \,G_{\mu_1\mu_2\mu_3\mu_4} \big( \sumperm_{\mu_1, \mu_2} \tensor{R}{^{\mu_1\mu_3\, \mu_2\mu_4}} \big) \, X_{d/2+2,3}[u \otimes u \otimes u]
= 0,
\end{align*}
by the complete symmetry of $G_{\mu_1\mu_2\mu_3\mu_4}$ and the skew symmetry of the first and second couples of indices in $\tensor{R}{^{\mu_1\mu_3\, \mu_2\mu_4}}$. \\
Since by \eqref{eq-X on only h}, $X_{d/2+1, 2}[u \otimes u] = \tfrac{1}{2}\, u^{-d/2+1}$, this amounts to $\tfrac{1}{6}\, \SC \, u^{-d/2+1} = \tfrac{1}{6}\, \SC \, X_{d/2,1}[u]$.
\\
The only term in $q$ is $X_{d/2, 1}[q]$. \\
The sum of 3 terms containing $\nabla^2_{\!\nu_1\nu_2} u$ is
\begin{align*}
&\begin{aligned}[t]
& - g^{\nu_1\nu_2} g^{\mu_1\mu_2} \, X_{d/2+1, 2, \mu_1\mu_2}[u \otimes (\nabla^2_{\!\nu_1\nu_2} u)]
- 2 g^{\mu_1\nu_1} g^{\mu_2\nu_2} \, X_{d/2+1, 2, \mu_1\mu_2}[u \otimes (\nabla^2_{\!\nu_1\nu_2} u)]
\\
&\quad\quad
+ 4 g^{\mu_1\nu_1} g^{\mu_2\nu_2} g^{\mu_3\mu_4} \, X_{d/2+2, 3, \mu_1\mu_2\mu_3\mu_4}[u \otimes u \otimes ( \nabla^2_{\!\nu_1\nu_2} u)]
\end{aligned}
\\[1mm]
&\quad\quad\quad\quad\quad\quad
=\begin{aligned}[t]
& 
- \tfrac{1}{2}d g^{\nu_1\nu_2} \, X_{d/2+1, 2}[u \otimes (\nabla^2_{\!\nu_1\nu_2} u)]
- g^{\nu_1\nu_2} \, X_{d/2+1, 2}[u \otimes (\nabla^2_{\!\nu_1\nu_2} u)]
\\
&\quad\quad
+ (d+2) g^{\nu_1\nu_2} \, X_{d/2+2, 3}[u \otimes u \otimes (\nabla^2_{\!\nu_1\nu_2} u)]
\end{aligned}
\\[1mm]
&\quad\quad\quad\quad\quad\quad
=\begin{aligned}[t]
&
- \tfrac{1}{2}(d+2) g^{\nu_1\nu_2} \, X_{d/2+1,2}[u \otimes (\nabla^2_{\!\nu_1\nu_2} u)]
+ (d+2) g^{\nu_1\nu_2} \, X_{d/2+2,3}[u \otimes u \otimes (\nabla^2_{\!\nu_1\nu_2} u)].
\end{aligned}
\end{align*}
The sum of 10 terms in $\nabla_{\!\nu_1} u$ and $\nabla_{\!\nu_2} u$ is:
\begin{align*}
&\begin{aligned}[t]
&
2 g^{\nu_1\nu_2} g^{\mu_1\mu_2} g^{\mu_3\mu_4}  \,X_{d/2+2, 3, \mu_1\mu_2\mu_3\mu_4}[u \otimes (\nabla_{\!\nu_1} u) \otimes (\nabla_{\nu_2} u)]
- g^{\nu_1\nu_2} g^{\mu_1\mu_2} \, X_{d/2+1, 2, \mu_1\mu_2}[(\nabla_{\!\nu_1} u) \otimes (\nabla_{\!\nu_2} u)]
\\
&\quad
- g^{\mu_1\nu_1} g^{\mu_2\nu_2} \, X_{d/2+1, 2, \mu_1\mu_2}[(\nabla_{\!\nu_1} u) \otimes (\nabla_{\nu_2} u)]
+ 2 g^{\mu_1\nu_1} g^{\mu_2\nu_2} g^{\mu_3\mu_4} \, X_{d/2+2, 3, \mu_1\mu_2\mu_3\mu_4}[(\nabla_{\!\nu_1} u) \otimes u \otimes (\nabla_{\!\nu_2} u)]
\\
&\quad
+ 2 g^{\mu_1\nu_1} g^{\mu_2\nu_2} g^{\mu_3\mu_4} \, X_{d/2+2, 3, \mu_1\mu_2\mu_3\mu_4}[u \otimes ( \nabla_{\!\nu_1} u )\otimes (\nabla_{\!\nu_2} u)]
+ 2 g^{\mu_1\nu_1} g^{\mu_2\nu_2} g^{\mu_3\mu_4} \, X_{d/2+2, 3, \mu_1\mu_2\mu_3\mu_4}[u \otimes (\nabla_{\!\nu_2} u) \otimes (\nabla_{\!\nu_1} u)]
\\
&\quad
+ 4 g^{\mu_1\nu_1} g^{\mu_2\nu_2} g^{\mu_3\mu_4} \, X_{d/2+2, 3, \mu_1\mu_2\mu_3\mu_4}[u \otimes (\nabla_{\!\nu_1} u) \otimes ( \nabla_{\!\nu_2} u)]
\\
&\quad
- 4 g^{\mu_1\nu_1} g^{\mu_2\nu_2} g^{\mu_3\mu_4} g^{\mu_5\mu_6} \, X_{d/2+3, 4, \mu_1\mu_2\mu_3\mu_4\mu_5\mu_6}[u \otimes (\nabla_{\!\nu_1} u) \otimes u \otimes  (\nabla_{\!\nu_2} u)]
\\
&\quad
- 4 g^{\mu_1\nu_1} g^{\mu_2\nu_2} g^{\mu_3\mu_4} g^{\mu_5\mu_6} \, X_{d/2+3, 4, \mu_1\mu_2\mu_3\mu_4\mu_5\mu_6}[u \otimes u \otimes (\nabla_{\!\nu_1} u) \otimes  (\nabla_{\!\nu_2} u)]
\\
&\quad
- 4 g^{\mu_1\nu_1} g^{\mu_2\nu_2} g^{\mu_3\mu_4} g^{\mu_5\mu_6} \, X_{d/2+3, 4, \mu_1\mu_2\mu_3\mu_4\mu_5\mu_6}[u \otimes u \otimes (\nabla_{\!\nu_2} u )\otimes (\nabla_{\!\nu_1} u)]
\end{aligned}
\\[1mm]
&\quad\quad\quad
= \begin{aligned}[t]
&\tfrac{1}{2}d(d+2)g^{\nu_1\nu_2} \, X_{d/2+2, 3}[u \otimes (\nabla_{\!\nu_1} u) \otimes (\nabla_{\!\nu_2} u)]
- \tfrac{1}{2}d g^{\nu_1\nu_2} \, X_{d/2+1, 2}[(\nabla_{\!\nu_1} u) \otimes (\nabla_{\!\nu_2} u)]
\\
&\quad
- \tfrac{1}{2} g^{\nu_1\nu_2} \, X_{d/2+1, 2}[(\nabla_{\nu_1} u) \otimes (\nabla_{\!\nu_2} u)]
+ \tfrac{1}{2} (d+2) g^{\nu_1\nu_2} \, X_{d/2+2, 3}[(\nabla_{\nu_1} u) \otimes u \otimes (\nabla_{\nu_2} u)]
\\
&\quad
+ 2(d+2) g^{\nu_1\nu_2} \, X_{d/2+2, 3}[u \otimes (\nabla_{\!\nu_1} u) \otimes ( \nabla_{\nu_2} u)]
- \tfrac{1}{2}(d+2)(d+4)  g^{\nu_1\nu_2} \, X_{d/2+3, 4}[u \otimes (\nabla_{\!\nu_1} u) \otimes u \otimes  (\nabla_{\!\nu_2} u)]
\\
&\quad
- (d+2)(d+4) g^{\nu_1\nu_2} \, X_{d/2+3, 4}[u \otimes u \otimes (\nabla_{\!\nu_1} u) \otimes  (\nabla_{\!\nu_2} u)]
\end{aligned}
\\[1mm]
&\quad\quad\quad
= \begin{aligned}[t]
&
- \tfrac{1}{2}(d+1) g^{\nu_1\nu_2} \, X_{d/2+1, 2}[(\nabla_{\!\nu_1} u) \otimes (\nabla_{\!\nu_2} u)]
+ \tfrac{1}{2} (d+2)(d+4)g^{\nu_1\nu_2} \, X_{d/2+2, 3}[u \otimes (\nabla_{\!\nu_1} u) \otimes (\nabla_{\!\nu_2} u)]
\\
&\quad
+ \tfrac{1}{2}(d+2)  g^{\nu_1\nu_2} \, X_{d/2+2, 3}[(\nabla_{\!\nu_1} u) \otimes u \otimes (\nabla_{\!\nu_2} u)]
- \tfrac{1}{2}(d+2)(d+4)  g^{\nu_1\nu_2} \, X_{d/2+3, 4}[u \otimes (\nabla_{\!\nu_1} u) \otimes u \otimes (\nabla_{\!\nu_2} u)]
\\
&\quad
- (d+2)(d+4) g^{\nu_1\nu_2} \, X_{d/2+3, 4}[u \otimes u \otimes (\nabla_{\!\nu_1} u) \otimes (\nabla_{\!\nu_2} u)].
\end{aligned}
\end{align*}
The only term in $\nnu\, p^{\mu}$ is
\begin{align*}
- 2 g^{\mu_1\nu_1} \, X_{d/2+1, 2, \mu_1\mu_2}[u \otimes (\nabla_{\!\nu_1} p^{\mu_2})]
&= - X_{d/2+1, 2}[u \otimes (\nmu p^{\mu})].
\end{align*}
The 4 terms in $p^\mu$ and $\nnu \,u$ are
\begin{align*}
& - g^{\mu_1\mu_2} \, X_{d/2+1, 2, \mu_1\mu_2}[p^{\nu_1} \otimes (\nabla_{\!\nu_1} u)]
- g^{\mu_2\nu_1} \, X_{d/2+1, 2, \mu_1\mu_2}[p^{\mu_1} \otimes (\nabla_{\!\nu_1} u)]
\\
&\quad
+ 2 g^{\mu_2\nu_2} g^{\mu_3\mu_4} \, X_{d/2+2, 3, \mu_1\mu_2\mu_3\mu_4}[p^{\mu_1} \otimes u \otimes (\nabla_{\!\nu_2} u)]
+ 2 g^{\mu_1\nu_1} g^{\mu_3\mu_4} \, X_{d/2+2, 3, \mu_1\mu_2\mu_3\mu_4}[u \otimes p^{\mu_2} \otimes (\nabla_{\!\nu_1} u)]
\\[1mm]
&\quad\quad
= 
\begin{aligned}[t]
& 
- \tfrac{1}{2}d \, X_{d/2+1, 2}[p^{\nu_1} \otimes (\nabla_{\!\nu_1} u)]
- \tfrac{1}{2} \, X_{d/2+1, 2}[p^{\nu_1} \otimes (\nabla_{\!\nu_1} u)]
\\
&\quad\quad
+ \tfrac{1}{2}(d+2) \, X_{d/2+2, 3}[p^{\nu_1} \otimes u \otimes (\nabla_{\!\nu_1} u)]
+ \tfrac{1}{2}(d+2) \, X_{d/2+2, 3}[u \otimes p^{\nu_1} \otimes (\nabla_{\!\nu_1} u)]
\end{aligned}
\\[1mm]
&\quad\quad
= 
- \tfrac{1}{2}(d+1) \, X_{d/2+1, 2}[p^{\nu_1} \otimes (\nabla_{\!\nu_1} u)]
+ \tfrac{1}{2}(d+2) \, X_{d/2+2, 3}[p^{\nu_1} \otimes u \otimes (\nabla_{\!\nu_1} u)]
+ \tfrac{1}{2}(d+2) \, X_{d/2+2, 3}[u \otimes p^{\nu_1} \otimes (\nabla_{\!\nu_1} u)].
\end{align*}
The 2 terms in $\nnu u$ and $p^\mu$ are
\begin{align*}
&\begin{aligned}[t]
& - g^{\mu_1\nu_1} \, X_{d/2+1, 2, \mu_1\mu_2}[(\nabla_{\!\nu_1} u) \otimes p^{\mu_2}]
+ 2 g^{\mu_1\nu_1} g^{\mu_3\mu_4} \, X_{d/2+2, 3, \mu_1\mu_2\mu_3\mu_4}[u \otimes (\nabla_{\!\nu_1} u) \otimes p^{\mu_2}]
\end{aligned}
\\[1mm]
&\quad\quad\quad\quad\quad\quad
= \begin{aligned}[t]
&
- \tfrac{1}{2} \, X_{d/2+1, 2}[(\nabla_{\!\nu_1} u) \otimes p^{\nu_1}]
+ \tfrac{1}{2}(d+2) \, X_{d/2+2, 3}[u \otimes (\nabla_{\!\nu_1} u) \otimes p^{\nu_1}].
\end{aligned}
\end{align*}
Finally, the term in $p^{\mu_1}$ and $p^{\mu_2}$ is
\begin{align*}
- X_{d/2+1, 2, \mu_1\mu_2}[p^{\mu_1} \otimes p^{\mu_2}]
&= - \tfrac{1}{2} g_{\mu_1\mu_2} \, X_{d/2+1, 2}[p^{\mu_1} \otimes p^{\mu_2}].
\end{align*}

\smallskip
Since the computation has been performed in normal coordinates, using \eqref{eq-hnabla1nc u}, \eqref{eq-hnabla2nc u} and \eqref{eq-hnabla1nc p}, one can replace the gauge covariant derivative $\nmu$ by the total derivative $\hnmu$ to get a fully covariant expression:
\begin{align}
\label{eq-R-upq-operators}
(4\pi)^{\,d/2}&\calR_2
= 
 +\tfrac{1}{6} \SC \, X_{d/2, 1}[u]
+ X_{d/2, 1}[q]
 - \tfrac{1}{2} (d+2)\, g^{\mu\nu} \, X_{d/2+1, 2}[u \otimes (\hnabla^2_{\mu\nu} u)]
\\
&+ (d+2) \, g^{\mu\nu} \, X_{d/2+2, 3}[u \otimes u \otimes (\hnabla^2_{\mu\nu} u)]
 - X_{d/2+1, 2}[u \otimes (\hnmu p^\mu)]
- \tfrac{1}{2}(d+1) \, g^{\mu\nu} \, X_{d/2+1, 2}[(\hnmu u )\otimes (\hnnu u)]  \nonumber
\\
&
+ \tfrac{1}{2}(d+2)(d+4)\, g^{\mu\nu} \, X_{d/2+2, 3}[u \otimes (\hnmu u) \otimes (\hnnu u)]
 + \tfrac{1}{2}(d+2) \, g^{\mu\nu} \, X_{d/2+2, 3}[(\hnmu u) \otimes u \otimes (\hnnu u)]  \nonumber
\\
& - (d+2)(d+4)\, g^{\mu\nu} \, X_{d/2+3, 4}[u \otimes u \otimes (\hnmu u) \otimes (\hnnu u)]
 - \tfrac{1}{2}(d+2)(d+4) \, g^{\mu\nu} \, X_{d/2+3, 4}[u \otimes (\hnmu u \otimes u) \otimes (\hnnu u)]  \nonumber
\\
& - \tfrac{1}{2}(d+1) \, X_{d/2+1, 2}[p^\mu \otimes (\hnmu u)]
+ \tfrac{1}{2}(d+2) \, X_{d/2+2, 3}[u \otimes p^\mu \otimes (\hnmu u)]
 + \tfrac{1}{2}(d+2) \, X_{d/2+2, 3}[p^\mu \otimes u \otimes (\hnmu u)]  \nonumber
 \\
& - \tfrac{1}{2} \, X_{d/2+1, 2}[(\hnmu u) \otimes p^\mu ]
+ \tfrac{1}{2}(d+2) \, X_{d/2+2, 3}[u \otimes (\hnmu u) \otimes p^\mu]
- \tfrac{1}{2} \, g_{\mu\nu} \, X_{d/2+1, 2}[p^\mu \otimes p^\nu ] .  \nonumber
\end{align}
In this expression the contribution of a few terms can be simplified using Lemma~\ref{lem-Xu} like:
\begin{align*}
&
- \tfrac{1}{2} (d+2)\, g^{\mu\nu} \, X_{d/2+1, 2}[u \otimes (\hnabla^2_{\mu\nu} u)]
+ (d+2) \, g^{\mu\nu} \, X_{d/2+2, 3}[u \otimes u \otimes (\hnabla^2_{\mu\nu} u)] \\
&\quad\quad\quad\quad\quad\quad
=  - \tfrac{1}{2}(d+2) \, g^{\mu\nu} \, X_{d/2+2, 3}[ u \otimes (\hnabla^2_{\mu\nu} u) \otimes u] ,
\\[3mm]
&
+\tfrac{1}{2}(d+2)(d+4) \, g^{\mu\nu} \, X_{d/2+2, 3}[u \otimes (\hnmu u) \otimes (\hnnu u)] 
- (d+2)(d+4) \, g^{\mu\nu} \, X_{d/2+3, 4}[u \otimes u \otimes (\hnmu u) \otimes (\hnnu u)] 
\\
&\quad\quad(
- \tfrac{1}{2} (d+2)(d+4)\, g^{\mu\nu} \, X_{d/2+3, 4}[u \otimes (\hnmu u )\otimes u \otimes (\hnnu u)] 
\\
&\quad\quad\quad\quad\quad\quad
= \tfrac{1}{2}(d+2)(d+4) \, g^{\mu\nu} \, X_{d/2+3, 4}[u \otimes (\hnmu u)\otimes (\hnnu u) \otimes u],
\\[3mm]
& 
- \tfrac{1}{2}(d+1) \, X_{d/2+1, 2}[p^\mu \otimes (\hnmu u)]  
+ \tfrac{1}{2} (d+2)\, X_{d/2+2, 3}[u \otimes p^\mu \otimes (\hnmu u)] 
+ \tfrac{1}{2}(d+2) \, X_{d/2+2, 3}[p^\mu \otimes u \otimes (\hnmu u)] 
\\
 &\quad\quad\quad\quad\quad\quad 
 = \tfrac{1}{2} \, X_{d/2+1, 2}[p^\mu \otimes (\hnmu u)] 
 - \tfrac{1}{2}(d+2) \, X_{d/2+2, 3}[ p^\mu \otimes (\hnmu u) \otimes u] .
\end{align*}
Finally, we get:

\begin{theorem}
The section $\calR_2$ of $\End(V)$ is
\begin{align}
\label{eq-R-upq-operators-simplified}
(4\pi)^{\,d/2} \calR_2= 
& +\tfrac{1}{6} \SC \, X_{d/2, 1}[u]
+ X_{d/2, 1}[q] 
 - \tfrac{1}{2}(d+2) \, g^{\mu\nu} \, X_{d/2+2, 3}[ u \otimes (\hnabla^2_{\mu\nu} u) \otimes u ]
\\
& 
- \tfrac{1}{2}(d+1) \, g^{\mu\nu} \, X_{d/2+1, 2}[(\hnmu u) \otimes (\hnnu u)]
+ \tfrac{1}{2}(d+2) \, g^{\mu\nu} \, X_{d/2+2, 3}[(\hnmu u) \otimes u \otimes (\hnnu u)]  \nonumber
\\
&+\tfrac{1}{2}(d+2)(d+4) \, g^{\mu\nu} \, X_{d/2+3, 4}[u \otimes (\hnmu u) \otimes (\hnnu u) \otimes u]
+\tfrac{1}{2} \,X_{d/2+1, 2}[p^\mu \otimes (\hnmu u)]  
- X_{d/2+1, 2}[u \otimes (\hnmu p^\mu)]  \nonumber
\\
& 
-\tfrac{1}{2}(d+2)\, X_{d/2+2, 3}[ p^\mu \otimes (\hnmu u) \otimes u]
- \tfrac{1}{2} \, X_{d/2+1, 2}([\hnmu u) \otimes p^\mu ]
+ \tfrac{1}{2}(d+2) \, X_{d/2+2, 3}[u \otimes (\hnmu u) \otimes p^\mu]   \nonumber
\\
& 
- \tfrac{1}{2} \, g_{\mu\nu} \, X_{d/2+1, 2}[p^\mu \otimes p^\nu ]
.    \nonumber
\end{align}
\end{theorem}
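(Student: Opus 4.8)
The plan is to obtain $\calR_2$ directly from the expansion of the heat trace recorded in Section~\ref{The method and its simplifications}. Concretely I would start from $(4\pi)^{\,d/2}\calR_2 = X_{d/2+1,2,\mu_1\mu_2}[K^{\mu_1}\otimes K^{\mu_2}] - X_{d/2,1}[P]$, which follows from the $t^{1}$-term of the Volterra series \eqref{Volt} after replacing each $f_k(\xi)$ and its $\xi$-integral by the operator $X_{d/2+p,k,\mu_1\dots\mu_{2p}}$ via \eqref{eq-fk-Xmuk} and Proposition~\ref{prop-simplified-computation}. The first step is to write $-P = H^{\nu_1\nu_2}\nabla^2_{\nu_1\nu_2} + L^{\nu_1}\nabla_{\nu_1} + q$ and $K^{\mu_1}\otimes K^{\mu_2} = -(L^{\mu_1}+2H^{\mu_1\nu_1}\nabla_{\nu_1})\otimes(L^{\mu_2}+2H^{\mu_2\nu_2}\nabla_{\nu_2})$, and then to propagate all the $\nabla_\nu$'s to the right by iterating the symbolic rule \eqref{eq-Xmuk-arguments-derivation}, accumulating the gauge curvature of $A_\mu$ in a trailing polynomial $Q[A]$. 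This turns every argument into a purely matrix-valued expression built from $u$, $p^\mu$, $q$, $H^{\mu\nu}$ and their $\nabla$-derivatives, at the price of raising the indices $(\alpha,k)$ through the insertions $\nabla_\nu H^{\mu\nu}$.

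Next I would pass to a geodesic (normal) coordinate system centered at the pinned point $x$, using the formulae of Section~\ref{Covariant derivatives and normal coordinates}: this kills $\partial_\nu g$, expresses $\partial^2 g$ in terms of the Riemann tensor, turns $\partial_\nu\Gamma^\mu$ into $\tfrac23\,\Ric_\nu{}^\mu$, and — crucially — allows replacing $\nabla$ by the total covariant derivative $\hnabla$ via \eqref{eq-hnabla1nc u}--\eqref{eq-hnabla1nc p}. Then one applies the operators $X_{d/2,1}$, $X_{d/2+1,2,\mu_1\mu_2}$, $X_{d/2+2,3,\mu_1\dots\mu_4}$ and $X_{d/2+3,4,\mu_1\dots\mu_6}$ to their respective arguments and contracts the leftover metric factors $g^{\mu\nu}$ against the symmetric $G$-tensors using \eqref{eq-contraction-g-Xmu}, which produces the dimensional coefficients $\tfrac12 d + p - 1$ responsible for the polynomials in $d$ appearing in the statement.

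Finally I would collect the terms by type. The terms carrying a non-trivial $Q[A]$ — those in $\partial_\nu A_\mu + A_\nu A_\mu$, in $L^\mu A_\nu$, and in $(\hnabla_\nu u)\,A_\nu$ — must cancel among themselves; I would check this directly from the reduction identity \eqref{eq-Xu} of Lemma~\ref{lem-Xu}, which is the algebraic incarnation of the gauge covariance of $\calR_2$. The surviving terms assemble into the intermediate expression \eqref{eq-R-upq-operators}; three of its groups — the one with $\hnabla^2 u$, the one with two factors $\hnabla u$, and the one with $p^\mu$ and $\hnmu u$ — are then condensed again via \eqref{eq-Xu}, which yields the stated formula.

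The main obstacle is not conceptual but the combinatorial weight of the computation: the rule \eqref{eq-Xmuk-arguments-derivation} generates a large number of terms, each carrying several $\xi$-indices to be symmetrized and several factors whose order inside the tensor product cannot be permuted, so the bookkeeping of index contractions together with the pairwise cancellation of all $Q[A]$-dependent terms has to be done with care; there is essentially no structural shortcut at this stage, which is still manageable by hand for $\calR_2$ but explains why the analogous computation for $\calR_4$ requires a dedicated code.
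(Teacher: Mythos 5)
Your proposal reproduces the paper's own route essentially step for step: the same starting expression $X_{d/2+1,2,\mu_1\mu_2}[K^{\mu_1}\otimes K^{\mu_2}]-X_{d/2,1}[P]$, propagation of the covariant derivatives by the symbolic rule \eqref{eq-Xmuk-arguments-derivation}, passage to normal coordinates to trade $\nabla$ for $\hnabla$, contraction of the leftover metric factors via \eqref{eq-contraction-g-Xmu}, cancellation of all $Q[A]$-dependent terms through Lemma~\ref{lem-Xu}, and a final condensation of the three groups of terms in \eqref{eq-R-upq-operators} by the same reduction identity. This is exactly the computation carried out in Section~\ref{Results on R2}, so the proposal is correct and takes the same approach.
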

A lengthy computation shows that this is compatible with \cite[Thm~2.4]{IochMass17c}.

According to Lemma~\ref{lem-Xu}, the writing of \eqref{eq-R-upq-operators-simplified} is not unique. 
For instance, using
\begin{align*}
&
- \tfrac{1}{2}(d+1) \, g^{\mu\nu} \, X_{d/2+1, 2}[(\hnmu u) \otimes (\hnnu u)]
+ \tfrac{1}{2}(d+2) \, g^{\mu\nu} \, X_{d/2+2, 3}[(\hnmu u) \otimes u \otimes (\hnnu u)]
\\
&\qquad
\begin{aligned}[t]
&
\quad = \tfrac{1}{2} \, g^{\mu\nu} \, X_{d/2+1, 2}[(\hnmu u) \otimes (\hnnu u)]
- \tfrac{1}{2} (d+2)\, g^{\mu\nu} \, X_{d/2+2, 3}[u \otimes (\hnmu u) \otimes (\hnnu u)]
\\
&
\quad\quad
- \tfrac{1}{2}(d+2) \, g^{\mu\nu} \, X_{d/2+2, 3}[(\hnmu u) \otimes (\hnnu u) \otimes u],
\end{aligned}
\end{align*}
this expression can be factorized as: 
\begin{corollary}
\label{cor-resultR2-factorized}
\begin{align*}
(4\pi)^{\,d/2} \calR_2
&=
\begin{aligned}[t]
& +\tfrac{1}{6} \SC \, X_{d/2, 1}[u]
+ X_{d/2, 1}[q]
- X_{d/2+1, 2}[ u \otimes (\hnmu p^\mu) ]
+ \tfrac{1}{2} \, g^{\mu\nu} \, X_{d/2+1, 2}[ (\hnmu u + p_\mu) \otimes (\hnnu \,u - p_\nu) ]
\\
& - \tfrac{1}{2}(d+2) \, g^{\mu\nu} \, X_{d/2+2, 3}[ u \otimes (\hnabla^2_{\mu\nu} u) \otimes u ]
- \tfrac{1}{2}(d+2) g^{\mu\nu} \, X_{d/2+2, 3}[ u \otimes (\hnmu u) \otimes (\hnnu u - p_\nu) ]
\\
& - \tfrac{1}{2}(d+2) g^{\mu\nu} \, X_{d/2+2, 3}[ (\hnmu u + p_\mu) \otimes (\hnnu\, u)  \otimes u ]
\\
& 
+ \tfrac{1}{2}(d+2)(d+4) \, g^{\mu\nu} \, X_{d/2+3, 4}[ u \otimes (\hnmu u) \otimes (\hnnu u) \otimes u ]
.
\end{aligned}
\end{align*}
\end{corollary}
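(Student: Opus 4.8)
The plan is to obtain the factorized expression from the already-established formula \eqref{eq-R-upq-operators-simplified} by a purely algebraic regrouping, using only the multilinearity of each $X_{\alpha,k}[\,\cdot\,]$ in its tensor slots, the freedom to raise and lower with $g$ the indices carried by $p^\mu$, $\hnmu u$ and $\hnabla^2_{\mu\nu}u$ (relabelling dummy indices accordingly), and a single application of Lemma~\ref{lem-Xu}.

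First I would record, as a consequence of Lemma~\ref{lem-Xu} (equivalently of the reduction \eqref{eq-reduction}) applied with $h=u$, the identity
\begin{align*}
g^{\mu\nu}\, X_{d/2+1,2}[(\hnmu u)\otimes(\hnnu u)]
&= g^{\mu\nu}\, X_{d/2+2,3}[u\otimes(\hnmu u)\otimes(\hnnu u)]
+ g^{\mu\nu}\, X_{d/2+2,3}[(\hnmu u)\otimes u\otimes(\hnnu u)]
\\
&\quad + g^{\mu\nu}\, X_{d/2+2,3}[(\hnmu u)\otimes(\hnnu u)\otimes u],
\end{align*}
which comes from \eqref{eq-Xu}. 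Eliminating the middle term on the right in favour of the left-hand side turns the combination $-\tfrac12(d+1)\, g^{\mu\nu} X_{d/2+1,2}[(\hnmu u)\otimes(\hnnu u)] + \tfrac12(d+2)\, g^{\mu\nu} X_{d/2+2,3}[(\hnmu u)\otimes u\otimes(\hnnu u)]$ occurring in \eqref{eq-R-upq-operators-simplified} into $\tfrac12\, g^{\mu\nu} X_{d/2+1,2}[(\hnmu u)\otimes(\hnnu u)] - \tfrac12(d+2)\, g^{\mu\nu} X_{d/2+2,3}[u\otimes(\hnmu u)\otimes(\hnnu u)] - \tfrac12(d+2)\, g^{\mu\nu} X_{d/2+2,3}[(\hnmu u)\otimes(\hnnu u)\otimes u]$, i.e. the partial identity displayed just before the statement. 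Substituting it into \eqref{eq-R-upq-operators-simplified} leaves every other term untouched.

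Next I would sort the resulting terms into three packets and recognize each packet as the multilinear expansion of one of the factored arguments in the statement. The $X_{d/2+1,2}$-packet gathers $\tfrac12\, g^{\mu\nu} X_{d/2+1,2}[(\hnmu u)\otimes(\hnnu u)]$ with $\tfrac12\, X_{d/2+1,2}[p^\mu\otimes(\hnmu u)]$, $-\tfrac12\, X_{d/2+1,2}[(\hnmu u)\otimes p^\mu]$ and $-\tfrac12\, g_{\mu\nu} X_{d/2+1,2}[p^\mu\otimes p^\nu]$; writing $p^\mu = g^{\mu\nu}p_\nu$ and contracting a free $g^{\mu\nu}$ onto $\hnmu u$, these four terms assemble into $\tfrac12\, g^{\mu\nu} X_{d/2+1,2}[(\hnmu u + p_\mu)\otimes(\hnnu u - p_\nu)]$. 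The first $X_{d/2+2,3}$-packet gathers $-\tfrac12(d+2)\, g^{\mu\nu} X_{d/2+2,3}[u\otimes(\hnmu u)\otimes(\hnnu u)]$ with $\tfrac12(d+2)\, X_{d/2+2,3}[u\otimes(\hnmu u)\otimes p^\mu]$, giving $-\tfrac12(d+2)\, g^{\mu\nu} X_{d/2+2,3}[u\otimes(\hnmu u)\otimes(\hnnu u - p_\nu)]$; the second $X_{d/2+2,3}$-packet gathers $-\tfrac12(d+2)\, g^{\mu\nu} X_{d/2+2,3}[(\hnmu u)\otimes(\hnnu u)\otimes u]$ with $-\tfrac12(d+2)\, X_{d/2+2,3}[p^\mu\otimes(\hnmu u)\otimes u]$, giving $-\tfrac12(d+2)\, g^{\mu\nu} X_{d/2+2,3}[(\hnmu u + p_\mu)\otimes(\hnnu u)\otimes u]$. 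The remaining terms $\tfrac16\SC\, X_{d/2,1}[u]$, $X_{d/2,1}[q]$, $-X_{d/2+1,2}[u\otimes(\hnmu p^\mu)]$, $-\tfrac12(d+2)\, g^{\mu\nu} X_{d/2+2,3}[u\otimes(\hnabla^2_{\mu\nu}u)\otimes u]$ and $\tfrac12(d+2)(d+4)\, g^{\mu\nu} X_{d/2+3,4}[u\otimes(\hnmu u)\otimes(\hnnu u)\otimes u]$ are carried over verbatim, and collecting everything produces the claimed formula.

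There is no genuine obstacle here: the assertion is an identity of formal expressions in the universal operators $X_{\alpha,k}$, and the only thing demanding care is the index bookkeeping — making sure that the metric contractions of the $p^\mu$ and $\hnmu u$ slots are symmetrized consistently, so that e.g. $g_{\mu\nu} X_{d/2+1,2}[p^\mu\otimes p^\nu]$ and $g^{\mu\nu} X_{d/2+1,2}[p_\mu\otimes p_\nu]$ are identified, and that the single invocation of Lemma~\ref{lem-Xu} is applied to the correct argument and slot. Everything else is elementary once \eqref{eq-R-upq-operators-simplified} is in hand, which is why the result is presented as a corollary.
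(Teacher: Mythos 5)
Your proposal is correct and takes essentially the same route as the paper: the paper's own derivation consists precisely of the single application of Lemma~\ref{lem-Xu} that converts $-\tfrac{1}{2}(d+1)\,g^{\mu\nu}X_{d/2+1,2}[(\hnmu u)\otimes(\hnnu u)]+\tfrac{1}{2}(d+2)\,g^{\mu\nu}X_{d/2+2,3}[(\hnmu u)\otimes u\otimes(\hnnu u)]$ into the three-term combination you display, followed by the (left implicit in the paper) multilinear regrouping with the $p$-terms that you spell out. Your index bookkeeping for the packets checks out, so nothing further is needed.
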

We do not know if this proposed factorization has some structural origin. \\
As explained just after Lemma~\ref{lem-N=0}, it can be useful to rewrite this result in terms of $N^\nu$:

\begin{corollary}
\label{cor-resultR2-code}
\begin{align*}
(4\pi)^{\,d/2} \calR_2 = {} 
 &+\tfrac{1}{6} \, \SC \, X_{d/2, 1}[ u ] +X_{d/2, 1}[ q ] +g^{\mu\nu} \,X_{d/2+1, 2}[ u \otimes (\hnabla^2_{\!\mu\nu} u) ]-\tfrac{1}{2}(d+2) \, g^{\mu\nu} \, X_{d/2+2, 3}[ u \otimes (\hnabla^2_{\!\mu\nu} u) \otimes u ] 
  \\
 & -(d +2) \, g^{\mu\nu} \, X_{d/2+2, 3}[ u \otimes (\hnabla_{\!\mu} u) \otimes (\hnabla_{\!\nu} u) ] + \tfrac{1}{2}(d+2)(d+4)\, g^{\mu\nu} \, X_{d/2+3, 4}[ u \otimes (\hnabla_{\!\mu} u) \otimes (\hnabla_{\nu} u) \otimes u ]   \\
 & +X_{d/2+1, 2}[ N^{\mu} \otimes (\hnabla_{\mu} u) ] 
  -\tfrac{1}{2}(d+2) \, X_{d/2+2, 3}[ N^{\mu} \otimes (\hnabla_{\!\mu}u) \otimes u ] 
  +\tfrac{1}{2}(d +2) \,X_{d/2+2, 3}[ u \otimes (\hnabla_{\!\mu} u) \otimes N^{\mu} ] \\
 & -\tfrac{1}{2} \, g_{\mu\nu} \, X_{d/2+1, 2}[ N^{\mu} \otimes N^{\nu} ] 
  -X_{d/2+1, 2}[ u \otimes (\hnabla_{\mu} N^{\mu}) ]  
  .
\end{align*}
\end{corollary}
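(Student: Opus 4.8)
The plan is to start from the factorized expression of Corollary~\ref{cor-resultR2-factorized} (equivalently from \eqref{eq-R-upq-operators-simplified}) and simply trade $p_\mu$ for $N_\mu$. Recall from the definition \eqref{eq-def-Nmu} of $N^\mu$ and from the equality $\nabla_\nu u = \hnabla_\nu u$ (see \eqref{eq-hnabla1nc u}) that, after lowering indices with $g$,
\begin{align*}
p_\mu = N_\mu - \hnabla_\mu u,\qquad \hnabla_\mu u + p_\mu = N_\mu,\qquad \hnabla_\nu u - p_\nu = 2\,\hnabla_\nu u - N_\nu ,
\end{align*}
and, since $\hnabla$ annihilates the metric,
\begin{align*}
\hnabla_\mu p^\mu = \hnabla_\mu N^\mu - \hnabla_\mu\big(g^{\mu\nu}\,\hnabla_\nu u\big) = \hnabla_\mu N^\mu - g^{\mu\nu}\,\hnabla^2_{\mu\nu} u .
\end{align*}

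First I would substitute these relations into the seven terms of Corollary~\ref{cor-resultR2-factorized} and expand, using only that each $X_{\alpha,k}$ is, pointwise on $\calU$, multilinear over $\bbC$ in its tensor slots, so that the scalar factors $g^{\alpha\beta}$ may be pushed through the arguments, i.e.\ indices raised and lowered inside the brackets. Concretely: the $\tfrac16\SC\,X_{d/2,1}[u]$ and $X_{d/2,1}[q]$ terms are untouched; $-X_{d/2+1,2}[u\otimes(\hnmu p^\mu)]$ becomes $-X_{d/2+1,2}[u\otimes(\hnmu N^\mu)]+g^{\mu\nu}X_{d/2+1,2}[u\otimes(\hnabla^2_{\mu\nu}u)]$; the quadratic term $\tfrac12 g^{\mu\nu}X_{d/2+1,2}[(\hnmu u+p_\mu)\otimes(\hnnu u-p_\nu)]=\tfrac12 g^{\mu\nu}X_{d/2+1,2}[N_\mu\otimes(2\hnnu u-N_\nu)]$ yields $X_{d/2+1,2}[N^\mu\otimes(\hnmu u)]-\tfrac12 g_{\mu\nu}X_{d/2+1,2}[N^\mu\otimes N^\nu]$; the cubic term $-\tfrac12(d+2)g^{\mu\nu}X_{d/2+2,3}[u\otimes(\hnmu u)\otimes(\hnnu u-p_\nu)]$ splits into $-(d+2)g^{\mu\nu}X_{d/2+2,3}[u\otimes(\hnmu u)\otimes(\hnnu u)]$ and $+\tfrac12(d+2)X_{d/2+2,3}[u\otimes(\hnmu u)\otimes N^\mu]$; the cubic term $-\tfrac12(d+2)g^{\mu\nu}X_{d/2+2,3}[(\hnmu u+p_\mu)\otimes(\hnnu u)\otimes u]$ becomes $-\tfrac12(d+2)X_{d/2+2,3}[N^\mu\otimes(\hnmu u)\otimes u]$; and the remaining $-\tfrac12(d+2)g^{\mu\nu}X_{d/2+2,3}[u\otimes(\hnabla^2_{\mu\nu}u)\otimes u]$ together with $+\tfrac12(d+2)(d+4)g^{\mu\nu}X_{d/2+3,4}[u\otimes(\hnmu u)\otimes(\hnnu u)\otimes u]$ carry over verbatim. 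Collecting the pieces produces exactly the asserted eleven-term formula.

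The point is that no new structural ingredient is needed here: the reduction Lemma~\ref{lem-Xu} and the metric contractions \eqref{eq-contraction-g-Xmu} were already consumed in passing from \eqref{eq-R-upq-operators-simplified} to Corollary~\ref{cor-resultR2-factorized}, so one now uses only multilinearity of the $X_{\alpha,k}$ and $\hnabla g = 0$. The sole real obstacle will be bookkeeping: tracking into which tensor slot each of the two summands of $\hnabla_\nu u - p_\nu = 2\hnabla_\nu u - N_\nu$ (and likewise of $\hnmu u + p_\mu = N_\mu$) lands inside the two cubic $X_{d/2+2,3}$ terms, and checking that the coefficients $\tfrac12(d+2)$, $(d+2)$ and $\tfrac12(d+2)(d+4)$ recombine as claimed. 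As a cross-check one can redo the same substitution starting instead from the unfactorized \eqref{eq-R-upq-operators-simplified}, which generates a few more intermediate terms but must collapse to the same answer.
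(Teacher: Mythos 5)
Your proposal is correct and follows essentially the same route as the paper: Corollary~\ref{cor-resultR2-code} is obtained there simply by rewriting the $\calR_2$ result in terms of $N^\mu$, i.e.\ by substituting $p_\mu = N_\mu - \hnabla_\mu u$ (so $\hnmu p^\mu = \hnmu N^\mu - g^{\mu\nu}\hnabla^2_{\mu\nu}u$) into \eqref{eq-R-upq-operators-simplified}/Corollary~\ref{cor-resultR2-factorized} and expanding by multilinearity, exactly as you do. Your term-by-term bookkeeping reproduces all eleven terms with the stated coefficients, so no gap remains.
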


\section{The code}
\label{The code}

The computation of $\calR_2$ exposed in Section~\ref{Results on R2} shows that the simplified method summarized in  Proposition~\ref{prop-simplified-computation} consists to apply a set of (mainly algebraic) rules at the level of the arguments of operators. This is to be contrasted with other methods based on more analytical properties of the heat coefficients, where for instance all possible expressions (based on the theory of invariants) are given by hand and their respective weights are computed (see \cite{Gilk95a} for instance). While these latter methods cannot be easily managed with a computer, the present method, being algebraic, can be translated into an algorithm.

The first step in the computation of $\calR_2$ makes appear a collection of fewer than 30 terms: they can be managed by hand. However, the same part of the computation for $\calR_4$ produces thousands of terms. This is why a computer is needed to perform this computation. 

Let us describe in this non-technical section the main characteristics of the computer code elaborated to make possible this computation. 

\smallskip
Some computer algebraic systems (CAS) have been evaluated as a possible basis for this code. But, to our best knowledge, none of them was able to manage, in a easy way and without adding external modules, all the complexity of this computation. Indeed, formal manipulations have to be performed, to list a few, on commutative and noncommutative objects, on derivations ($\nabla_\mu$, $\hnabla_\mu$, $\pmu$), on Riemannian structures (metric, Christoffel symbols, Ricci and Riemann tensors…), on contraction of tensors, on gauge structures ($A_\mu$ and its curvature $F_{\mu\nu}$), on tensor products, on polynomials (in the dimension parameter $d$)… Starting a formal computational code from the very beginning, as we did, has the following two main advantages: its purpose is to manipulate the necessary structures encountered in the computation, and only these structures; its internal model is based on the “mathematical model” that the method reveals.

This last point is a strong motivation to use an \emph{object oriented language} in order to internally reproduce and manipulate, in a “natural” way, all the mathematical structures describing the key ingredient in which the method (and so the code) focuses: the “arguments” of the operators $X_{\alpha,k}$, as explained in Proposition~\ref{prop-simplified-computation}. So, the code is built from the beginning on objects such as polynomials, commutative and noncommutative “elements” (for instance Riemannian tensors or matrix-valued functions), derivations (which can be applied, in a repetitive way, on the previously mentioned elements), products of elements (respecting commutative and noncommutative rules), tensor products, and finally the “arguments” of the $X_{\alpha,k}$ operators with collected commutative elements in front and the presence of the $Q[A]$ matrix-valued polynomials “on the right”.

The object oriented language selected is \texttt{JavaScript}, the powerful language used in web browsers. This choice relies on various motivations. One of us was familiar with this language, and this helped to produce an efficient code quickly. The \texttt{Node} runtime\footnote{\url{https://nodejs.org}} permits to execute \texttt{JavaScript} as a scripting language in a terminal and it makes possible to read and write files.\footnote{Since it uses an extension of the “strict” \texttt{JavaScript} language used in web browsers.} Moreover, the execution relies on the open source version of the very optimized \texttt{JavaScript} engine \texttt{V8}:\footnote{On which a lot of software engineers are working in a big private company…} benchmarks are very favorably compared to \texttt{Python} for instance (a language that would have been a good choice also). All the results are saved in files using the (open and native) format \texttt{JSON}\footnote{“JavaScript Object Notation”, a very convenient human readable structure format for data, that any modern language can read and write.} and these results can be read as inputs for further computations. The translation of the code could be done into any other modern object oriented language.

On top of the main objects that the code can manipulate (with “natural methods” from a mathematical point of view), specific (higher level) functions have been coded to reproduce mathematical rules, like for instance contractions of Riemannian tensors, raising of indices, some simplifications… Substitutions of “elements” by more complicated structures are also made possible: these permits to reproduce the steps described in Section~\ref{Results on R2}, where the computation is first done on the mathematical objects $H^{\mu\nu}$, $L^\mu$, and $q$ and then, in a second step, these objects (and their derivatives) are substituted using rules given in Section~\ref{Covariant derivatives and normal coordinates} (into normal coordinates). Substitutions rules can be hard coded or computed.

One of the main challenges when constructing such a code is to be able to simplify expressions to collect similar terms. This has required quite a lot of work to construct a “normalized” internal representation of terms (taking into account ambiguities on commutativity of elements, ambiguities on labeling indices in tensor contractions…). In that respect, the code may not compare to more mature CAS. A way to bypass this weakness was to make the exportation possible to inject expressions into another CAS: Our choice was to use \texttt{Mathematica} to perform formal computations (mainly to simplify the results at the very end of the computation) and to inject back the obtained results in the code. 

The code can also export the generated expressions in \LaTeX, and all the results presented in this paper are those directly obtained in this way. Only the final layout has been adapted to reduce space.

\smallskip
Let us now explain the main steps of the computation of $\calR_4$. One of the main ingredients in the computation is the substitution rule described in Section~\ref{Covariant derivatives and normal coordinates}. In order to avoid as much as possible any transcription errors, the choice was made to only hard code the substitutions \eqref{eq-partial0gq} and \eqref{eq-partial4gq} for the metric and its derivatives (up to four) to normal coordinates. So, a preliminary step consists in computing (and save for later use) all the necessary substitutions of covariant derivatives of $H^{\mu\nu}$, $L^\mu$, and $q$ (up to the necessary number of derivations for the computation of $\calR_4$) in terms of Riemannian tensors and derivations of $u$, $N^\mu$ and $q$. The subsequent preliminary step is to compute the replacement of the covariant derivative $\nabla_\mu$ by the total covariant derivative $\hnabla_\mu$ (up to the necessary number of derivations) on these latter elements.

Then, after these preliminary results are stored, the main computation starts with the propagation of covariant derivations as given by the rule \eqref{eq-Xmuk-arguments-derivation}. Terms are next collected according to $Q[A]$ in order to apply, sequentially to these reduced numbers of terms, the following steps: 
\begin{itemize}[itemsep=2pt, parsep=0pt, topsep=2pt]
\item Substitutions are performed to generate expressions in normal coordinates.
\item The necessary contractions with the tensors $G_{\mu_1\dots \mu_{2p}}$ (computed on the fly) are performed.
\item A series of rules (contractions of tensors, raising of indices…) is applied to all the terms as long as these rules can be applied.
\item The “expansion” rules of Lemma~\ref{lem-Xu} are applied to leverage terms with same patterns\footnote{A “pattern” of an argument is the reduced ordered list of elements appearing in this argument (forgetting the polynomials in dimension $d$ in front of it) where the $u$ elements (without applied derivation) are omitted.} to a common number $k$ of arguments, so that they can be compared. 
\item A full simplification of the obtained sum is performed by adding similar terms.
\item The results are then saved in \texttt{JSON} file format for later use and in \LaTeX\ for human reading.
\end{itemize}

Terms with same patterns are then collected in partial sums since they can produce simpler expressions, thanks to the use of Lemma~\ref{lem-Xu}, this time to reduce, as much as possible, the number of arguments $k$ and the number of terms. There is no uniqueness in this simplification procedure and a balance has to be found to produce these simplified expressions. This process has been mainly done in \texttt{Mathematica} after exportation of these partial sums. Once simplified expressions are obtained, they are written (by hand) in files that the code uses as inputs to compare them to their original (non simplified) versions. This series of checks, the last step of the computation, also exports the simplified expressions in \LaTeX: they are the expressions presented in this paper.

The code produces new results for $\calR_4$ which, to our best knowledge, never appeared before. So, some comparisons against known results have to be performed to confirm the validity of the code (at least partially). Three tests have been successful:
\begin{enumerate}[itemsep=2pt, parsep=0pt, topsep=2pt]
\item The computation of $\calR_2$ reproduces the result obtained by hand (this is the result given in Corollary~\ref{cor-resultR2-code}).

\item The computation of $\calR_2$ for the $2$-dimensional noncommutative torus produces results in agreement with  those in \cite{IochMass17c} (once translated back into spectral functions): the interested reader will find this result in \LaTeX\ files accompanying the open source code  \cite{IochMass19a}.

\item The case $u$ parallel (Section~\ref{u parallel}) is a special case for $\calR_4$ in which all the derivations of $u$ are put to zero. With the further specifications $u=\bbbone$ and $N^\mu = 0$, the result agrees with \cite[Theorem 3.3.1]{Gilk03a}.
\end{enumerate}
Notice that the consistency of gauge invariant expressions in the $Q[A]$-part of the results (see Section~\ref{sec R4k k non zero}) is also a strong requirement for the global validity of the code.

\smallskip
The code has been written with the method in mind, not for the computation of $\calR_4$ in particular. This means that it can be used to compute $\calR_r$ for $r\geq 6$ (and then the number of generated terms will be huge!) and it can also be appropriate in situations where some elements take specific values (for instance, in the $2$-dimensional noncommutative torus case, all the elements are written in terms of a single positive element in the noncommutative algebra). This makes the code flexible enough for further computations of heat coefficients for non minimal Laplace type operators.

Anyone can contribute to the open-source code (GNU GPL v3 License) we produced \cite{IochMass19a} and can use it as a starting point for his/her projects as far as the required computations are accessible by the method exposed in this paper.

\section{\texorpdfstring{Results on $\calR_4$ produced by computer}{Results on R4 produced by computer}}
\label{Results on R4}

To compute $\calR_4$ in $a_4(a,P)(x) = \tr[ a(x) \calR_4(x) ]$, we start with
\begin{align*}
(4\pi)^{\,d/2} \calR_4
&= 
\begin{aligned}[t]
& X_{d/2, 2}[P \otimes P]
- X_{d/2+1, 3, \mu_1\mu_2}[K^{\mu_1} \otimes K^{\mu_2} \otimes P]
 - X_{d/2+1, 3, \mu_1\mu_2}[K^{\mu_1} \otimes P \otimes K^{\mu_2}]
 \\
 &
- X_{d/2+1, 3, \mu_1\mu_2}[P \otimes K^{\mu_1} \otimes K^{\mu_2}]
+ X_{d/2+2, 4, \mu_1\mu_2\mu_3\mu_4}[K^{\mu_1} \otimes K^{\mu_2} \otimes K^{\mu_3} \otimes K^{\mu_4}]
.
\end{aligned}
\end{align*}
Here, the series of spectral operators $X_{\alpha, k}$ appearing in the computation are $X_{d/2-1, 1}, X_{d/2, 2}, X_{d/2+1, 3}, \dots, X_{d/2+k-2, k}$ for $k =1,\dots, 6$. Thus, in this section we adopt the shorthands $X_k$ for $k =1,\dots, 6$.

Application of \eqref{eq-Xmuk-arguments-derivation} is done using a computer because it gives too many terms. Actually, after simplification, it produces thousands of terms, that can be sorted according to only 5 values of $Q[A]\, v = \nabla^k v$ produced for $v \in \bbC^N$: $v$, $\nabla_{\!\nu_1} v$, $\nabla^2_{\nu_1\nu_2} v$, $\nabla^3_{\nu_1\nu_2\nu_3} v$, and $\nabla^4_{\nu_1\nu_2\nu_3\nu_4} v$. These five sums are denoted $\calR_{4,k}$ for $k\in\{0,1,2,3,4\}$, thus
\begin{align*}
\calR_4(x)=\sum_{k=0}^4\calR_{4,k}(x).
\end{align*}

Since the factors in front of $Q[A]$ have homogeneous gauge transformations, the result should be written as (explicitly) gauge homogeneous expressions to ensure that $\calR_4(x)$ transforms homogeneously. 

To simplify, the results are presented with $N^\mu = 0$, but the interested reader will find the general case in \LaTeX\ files accompanying the open source code  \cite{IochMass19a}.

\newlength{\interblock}
\setlength{\interblock}{2mm}

\setcounter{termscounter}{0}

\subsection{\texorpdfstring{Computation of $\calR_{4,k}$ for $k \neq 0$}{Computation of R4k}}
\label{sec R4k k non zero}

When $Q[A] \,v= \nabla^4_{\nu_1\nu_2\nu_3\nu_4} v$, the factor can be computed by hand from the very beginning because only few terms contribute. The only possible gauge homogeneous expression is $F^{\nu_1\nu_2} F_{\nu_1\nu_2}$ and indeed the computer returns directly only one term:
\begin{align*}
(4\pi)^{\,d/2}\calR_{4,4} ={}
& +\tfrac{1}{12} \, \SO_{1}[ u ]  \, F^{\nu_{1}\nu_{2}} F_{\nu_{1}\nu_{2}} \EqLine{1} 
\end{align*}

For $Q[A]\,v = \nabla^3_{\nu_1\nu_2\nu_3} v$, the computer produces terms that can be sorted following a repeating pattern:
 \begin{align*}
 (g^{\nu_1\nu_2}b^{\nu_3}-2g^{\nu_1\nu_3}b^{\nu_2}+g^{\nu_2\nu_3}b^{\nu_1})  \, (\nabla^3_{\nu_{1}\nu_{2}\nu_{3}} v).
 \end{align*}
In this expression, changing the summation variables and using the symmetry of metric indices, the gauge homogeneous expression $\nabla_{\!\nu_1} F_{\nu_2\nu_3}$ appears automatically:
\begin{align*}
g^{\nu_{1}\nu_{2}} \, b^{\nu_{3}} \big(
\nabla^3_{\nu_{1}\nu_{2}\nu_{3}} v 
- \nabla^3_{\nu_{1}\nu_{3}\nu_{2}} v 
+ \nabla^3_{\nu_{3}\nu_{2}\nu_{1}} v 
- \nabla^3_{\nu_{2}\nu_{3}\nu_{1}} v
\big)
&=
 g^{\nu_{1}\nu_{2}} \, b^{\nu_{3}} \, (\nabla_{\!\nu_1} F_{\nu_2\nu_3}) v,
\end{align*}
because using \eqref{eq-commutator of hnabla},
\begin{align*}
(\nabla_{\!\nu_1} F_{\nu_2\nu_3})\, v & 
= \nabla_{\!\nu_1} (F_{\nu_2\nu_3} v) - F_{\nu_2\nu_3} \nabla_{\!\nu_1} v 
= \nabla^3_{\nu_{1}\nu_{2}\nu_{3}} v
- \nabla^3_{\nu_{1}\nu_{3}\nu_{2}} v
+ \nabla^3_{\nu_{3}\nu_{2}\nu_{1}} v 
- \nabla^3_{\nu_{2}\nu_{3}\nu_{1}} v.
\end{align*}
More precisely, this gives, since $\nabla_{\nu_1} F_{\nu_2\nu_3} \tonc \hnabla_{\nu_1} F_{\nu_2\nu_3}$,
\begin{align*}
(4\pi)^{\,d/2}\calR_{4,3} ={}
& +\tfrac{1}{6}(d - 2) \, g^{\nu_{1}\nu_{2}} g^{\nu_{3}\nu_{4}} \, \SO_{2}[ u \otimes (\hnabla_{\nu_{4}} u) ]  \, (\hnabla_{\nu_{1}} F_{\nu_{2}\nu_{3}}) \EqLine{1} 
-d \, g^{\nu_{1}\nu_{2}} g^{\nu_{3}\nu_{4}} \, \SO_{3}[ u \otimes u \otimes (\hnabla_{\nu_{4}} u) ]  \, (\hnabla_{\nu_{1}} F_{\nu_{2}\nu_{3}}) \EqLine{2} 
\\ & 
+2 (d + 2) \, g^{\nu_{1}\nu_{2}} g^{\nu_{3}\nu_{4}} \, \SO_{4}[ u \otimes u \otimes u \otimes (\hnabla_{\nu_{4}} u) ]  \, (\hnabla_{\nu_{1}} F_{\nu_{2}\nu_{3}}) \EqLine{3} 
\end{align*}

When $Q[A]\,v=\nabla^2_{\nu_1\nu_2} v$, the symmetric part with respect to $\nu_1,\nu_2$ does not produce a gauge homogeneous term, thus must be zero. This is checked by the computer which only returns the skew symmetric part (the $F_{\nu_1\nu_2}$ tensor):
\begin{align*}
(4\pi)^{\,d/2}&\calR_{4,2} ={}\\
& -d \, \SO_{3}[ u \otimes (\hnabla_{\nu_{1}} u) \otimes (\hnabla_{\nu_{2}} u) ]  \, F^{\nu_{1}\nu_{2}} \EqLine{1} 
 -4 \, \SO_{4}[ u \otimes u \otimes (\hnabla_{\nu_{1}} u) \otimes (\hnabla_{\nu_{2}} u) ]  \, F^{\nu_{1}\nu_{2}} \EqLine{2} 
 \\ & 
 +\tfrac{1}{2}(d + 2) (d + 4) \, \SO_{5}[ u \otimes (\hnabla_{\nu_{1}} u) \otimes u \otimes (\hnabla_{\nu_{2}} u) \otimes u ]  \, F^{\nu_{1}\nu_{2}} \EqLine{3} 
 +2 (d + 4) \, \SO_{5}[ u \otimes u \otimes (\hnabla_{\nu_{1}} u) \otimes (\hnabla_{\nu_{2}} u) \otimes u ]  \, F^{\nu_{1}\nu_{2}} \EqLine{4} 
 \\ & 
 +4 (d + 4) \, \SO_{5}[ u \otimes u \otimes u \otimes (\hnabla_{\nu_{1}} u) \otimes (\hnabla_{\nu_{2}} u) ]  \, F^{\nu_{1}\nu_{2}} \EqLine{5} 
 +(d + 4) (d + 6) \, \SO_{6}[ u \otimes u \otimes (\hnabla_{\nu_{1}} u) \otimes u \otimes u \otimes (\hnabla_{\nu_{2}} u) ]  \, F^{\nu_{1}\nu_{2}} \EqLine{6} 
 \\ & 
 +(d + 4) (d + 6) \, \SO_{6}[ u \otimes u \otimes u \otimes (\hnabla_{\nu_{1}} u) \otimes u \otimes (\hnabla_{\nu_{2}} u) ]  \, F^{\nu_{1}\nu_{2}} \EqLine{7} 
\\[\interblock]
& -\tfrac{1}{2}d \, \SO_{3}[ u \otimes u \otimes \comFu_{\nu_{1}\nu_{2}} ]  \, F^{\nu_{1}\nu_{2}} \EqLine{1} 
 +(d + 2) \, \SO_{4}[ u \otimes u \otimes u \otimes \comFu_{\nu_{1}\nu_{2}} ]  \, F^{\nu_{1}\nu_{2}} \EqLine{2} 
\\ & 
 -(d + 4) \, \SO_{5}[ u \otimes u \otimes u \otimes u \otimes \comFu_{\nu_{1}\nu_{2}} ]  \, F^{\nu_{1}\nu_{2}} \EqLine{3} 
\end{align*}

The contribution $\calR_{4,1}$ with $Q[A] \, v= \nabla_{\!\nu_1} \!v$ does not transform homogeneously because $\nabla_{\!\nu_1} \!v = A_{\nu_1} \!v$, and one cannot produce a gauge homogeneous expression as a polynomial of $A_\nu$ of degree 1 with no derivations. As a consequence, $\calR_{4,1}$ should vanish and this is what the computer returns:
\begin{align*}
\calR_{4,1} = 0.
\end{align*}

\subsection{\texorpdfstring{Computation of $\calR_{4,0}$}{Computation of R40}}

Here we use the following notations: $\Sumgigi^{\nu_1\nu_2\nu_3\nu_4}
\vc \tfrac{1}{4} \left( g^{\nu_1\nu_2}g^{\nu_3\nu_4} + g^{\nu_1\nu_3}g^{\nu_2\nu_4} + g^{\nu_1\nu_4}g^{\nu_2\nu_3} \right)$ and
\begin{align*}
\widehat{\Delta}_{\mu\nu} \vc  \tfrac{1}{2}(\hnabla_\mu \hnabla_\nu + \hnabla_\nu \hnabla_\mu), \qquad  \anticomHLHN_{\nu} \vc \hDelta\, \hnabla_{\nu} + \hnabla_{\nu} \,\hDelta,
\end{align*}
so that $\hnabla_{\mu\nu}^2 =\widehat{\Delta}_{\mu\nu} + \tfrac{1}{2} F_{\mu\nu}$ and $
\widehat{\Delta}=g^{\mu\nu}\,\hnabla_{\mu\nu}^2 =g^{\mu\nu}\, \hDelta_{\mu\nu}$.

The computer produces around 400 terms for $\calR_{4,0}$, but after simplifications based on \eqref{eq-reduction}, this reduces to the following 180 terms collected according to their pattern:
\begin{align*}
(&4\pi)^{\,d/2}\calR_{4,0} ={}\\
& +\tfrac{1}{180} \, \abs{R}^2 \, \SO_{1}[ u ]  \EqLine{1}
-\tfrac{1}{180} \, \abs{\Ric}^2 \, \SO_{1}[ u ]  \EqLine{1} 
+\tfrac{1}{30} \, (\hDelta \SC) \, \SO_{1}[ u ]  \EqLine{1} 
+\tfrac{1}{72} \, \SC^2 \, \SO_{1}[ u ]  \EqLine{1} 
\\[\interblock]
& -\tfrac{1}{12}(d - 2) \, g^{\nu_{1}\nu_{2}} (\hnabla_{\nu_{2}} \SC) \, \SO_{2}[ u \otimes (\hnabla_{\nu_{1}} u) ]  \EqLine{1} 
 +\tfrac{1}{2}(d + 2) \, g^{\nu_{1}\nu_{2}} (\hnabla_{\nu_{2}} \SC) \, \SO_{4}[ u \otimes u \otimes u \otimes (\hnabla_{\nu_{1}} u) ]  \EqLine{2} 
\\[\interblock]
& -\tfrac{1}{3}d \, \Ric^{\nu_{1}\nu_{2}} \, \SO_{3}[ u \otimes (\hnabla_{\nu_{1}} u) \otimes (\hnabla_{\nu_{2}} u) ]  \EqLine{1} 
 +\tfrac{1}{3}(d + 2) (d + 3) \, \Ric^{\nu_{1}\nu_{2}} \, \SO_{4}[ u \otimes u \otimes (\hnabla_{\nu_{1}} u) \otimes (\hnabla_{\nu_{2}} u) ]  \EqLine{2} 
\\ & 
 -\tfrac{1}{12}d (d + 2) \, \Ric^{\nu_{1}\nu_{2}} \, \SO_{4}[ u \otimes (\hnabla_{\nu_{1}} u) \otimes u \otimes (\hnabla_{\nu_{2}} u) ]  \EqLine{3} 
 -(d + 2) (d + 4) \, \Ric^{\nu_{1}\nu_{2}} \, \SO_{5}[ u \otimes u \otimes u \otimes (\hnabla_{\nu_{1}} u) \otimes (\hnabla_{\nu_{2}} u) ]  \EqLine{4} 
\\ & 
 +\tfrac{1}{4}(d + 2) (d + 4) \, \Ric^{\nu_{1}\nu_{2}} \, \SO_{5}[ u \otimes (\hnabla_{\nu_{1}} u) \otimes u \otimes (\hnabla_{\nu_{2}} u) \otimes u ]  \EqLine{5} 
\\
& -\tfrac{1}{6}d \, g^{\nu_{1}\nu_{2}} \SC \, \SO_{3}[ u \otimes (\hnabla_{\nu_{1}} u) \otimes (\hnabla_{\nu_{2}} u) ]  \EqLine{1} 
 +\tfrac{1}{12}d (d + 2) \, g^{\nu_{1}\nu_{2}} \SC \, \SO_{4}[ u \otimes (\hnabla_{\nu_{1}} u) \otimes (\hnabla_{\nu_{2}} u) \otimes u ]  \EqLine{2} 
\\[\interblock]
& 
-2 (d + 4) (d^{2} + 10d + 28) \, \Sumgigi^{\nu_{1}\nu_{2}\nu_{3}\nu_{4}} \, \SO_{5}[ u \otimes (\hnabla_{\nu_{1}} u) \otimes (\hnabla_{\nu_{2}} u) \otimes (\hnabla_{\nu_{3}} u) \otimes (\hnabla_{\nu_{4}} u) ]  \EqLine{1} 
\\ & 
 +4 (d + 4) (d + 6) \, \Sumgigi^{\nu_{1}\nu_{2}\nu_{3}\nu_{4}} \, \SO_{6}[ u \otimes (\hnabla_{\nu_{1}} u) \otimes (\hnabla_{\nu_{2}} u) \otimes (\hnabla_{\nu_{3}} u) \otimes (\hnabla_{\nu_{4}} u) \otimes u ]  \EqLine{2} 
\\ & 
 +2 (d + 4) (d + 6)^{2} \, \Sumgigi^{\nu_{1}\nu_{2}\nu_{3}\nu_{4}} \, \SO_{6}[ u \otimes (\hnabla_{\nu_{1}} u) \otimes (\hnabla_{\nu_{2}} u) \otimes (\hnabla_{\nu_{3}} u) \otimes u \otimes (\hnabla_{\nu_{4}} u) ]  \EqLine{3} 
\\ & 
 +2 (d + 4)^{2} (d + 6) \, \Sumgigi^{\nu_{1}\nu_{2}\nu_{3}\nu_{4}} \, \SO_{6}[ u \otimes (\hnabla_{\nu_{1}} u) \otimes (\hnabla_{\nu_{2}} u) \otimes u \otimes (\hnabla_{\nu_{3}} u) \otimes (\hnabla_{\nu_{4}} u) ]  \EqLine{4} 
\\ & 
 -4 (d + 4) (d^{2} + 8d + 28) \, \Sumgigi^{\nu_{1}\nu_{2}\nu_{3}\nu_{4}} \, \SO_{6}[ u \otimes u \otimes (\hnabla_{\nu_{1}} u) \otimes (\hnabla_{\nu_{2}} u) \otimes (\hnabla_{\nu_{3}} u) \otimes (\hnabla_{\nu_{4}} u) ]  \EqLine{5} 
\\ & 
 -2 (d + 2) (d + 4) (d + 6) \, \Sumgigi^{\nu_{1}\nu_{2}\nu_{3}\nu_{4}} \, \SO_{6}[ u \otimes (\hnabla_{\nu_{1}} u) \otimes u \otimes (\hnabla_{\nu_{2}} u) \otimes (\hnabla_{\nu_{3}} u) \otimes (\hnabla_{\nu_{4}} u) ]  \EqLine{6} 
\\ & 
 +8 (d + 4) (d^{2} + 10d + 32) \, \Sumgigi^{\nu_{1}\nu_{2}\nu_{3}\nu_{4}} \, \SO_{7}[ u \otimes u \otimes (\hnabla_{\nu_{1}} u) \otimes (\hnabla_{\nu_{2}} u) \otimes u \otimes (\hnabla_{\nu_{3}} u) \otimes (\hnabla_{\nu_{4}} u) ]  \EqLine{7} 
\\ & 
 +4 (d + 4) (d^{2} + 6d + 16) \, \Sumgigi^{\nu_{1}\nu_{2}\nu_{3}\nu_{4}} \, \SO_{7}[ u \otimes u \otimes (\hnabla_{\nu_{1}} u) \otimes (\hnabla_{\nu_{2}} u) \otimes (\hnabla_{\nu_{3}} u) \otimes u \otimes (\hnabla_{\nu_{4}} u) ]  \EqLine{8} 
\\ & 
-32 (d + 4)^{2} \, \Sumgigi^{\nu_{1}\nu_{2}\nu_{3}\nu_{4}} \, \SO_{7}[ u \otimes u \otimes (\hnabla_{\nu_{1}} u) \otimes (\hnabla_{\nu_{2}} u) \otimes (\hnabla_{\nu_{3}} u) \otimes (\hnabla_{\nu_{4}} u) \otimes u ]  \EqLine{9} 
\\ & 
 +8 (d + 4)^{2} (d + 6) \, \Sumgigi^{\nu_{1}\nu_{2}\nu_{3}\nu_{4}} \, \SO_{7}[ u \otimes (\hnabla_{\nu_{1}} u) \otimes u \otimes u \otimes (\hnabla_{\nu_{2}} u) \otimes (\hnabla_{\nu_{3}} u) \otimes (\hnabla_{\nu_{4}} u) ]  \EqLine{10} 
\\ & 
 +4 (d + 4)^{2} (d + 6) \, \Sumgigi^{\nu_{1}\nu_{2}\nu_{3}\nu_{4}} \, \SO_{7}[ u \otimes (\hnabla_{\nu_{1}} u) \otimes u \otimes (\hnabla_{\nu_{2}} u) \otimes u \otimes (\hnabla_{\nu_{3}} u) \otimes (\hnabla_{\nu_{4}} u) ]  \EqLine{11} 
\\ & 
 +2 d (d + 4) (d + 6) \, \Sumgigi^{\nu_{1}\nu_{2}\nu_{3}\nu_{4}} \, \SO_{7}[ u \otimes (\hnabla_{\nu_{1}} u) \otimes u \otimes (\hnabla_{\nu_{2}} u) \otimes (\hnabla_{\nu_{3}} u) \otimes u \otimes (\hnabla_{\nu_{4}} u) ]  \EqLine{12} 
\\ & 
 -16 (d + 4) (d + 6) \, \Sumgigi^{\nu_{1}\nu_{2}\nu_{3}\nu_{4}} \, \SO_{7}[ u \otimes (\hnabla_{\nu_{1}} u) \otimes u \otimes (\hnabla_{\nu_{2}} u) \otimes (\hnabla_{\nu_{3}} u) \otimes (\hnabla_{\nu_{4}} u) \otimes u ]  \EqLine{13} 
\\ & 
 +2 (d + 4) (d + 6) (d + 8) \, \Sumgigi^{\nu_{1}\nu_{2}\nu_{3}\nu_{4}} \, \SO_{7}[ u \otimes (\hnabla_{\nu_{1}} u) \otimes (\hnabla_{\nu_{2}} u) \otimes u \otimes (\hnabla_{\nu_{3}} u) \otimes (\hnabla_{\nu_{4}} u) \otimes u ]  \EqLine{14} 
\\ & 
 +24 (d + 4) (d^{2} + 10d + 32) \, \Sumgigi^{\nu_{1}\nu_{2}\nu_{3}\nu_{4}} \, \SO_{7}[ u \otimes u \otimes u \otimes (\hnabla_{\nu_{1}} u) \otimes (\hnabla_{\nu_{2}} u) \otimes (\hnabla_{\nu_{3}} u) \otimes (\hnabla_{\nu_{4}} u) ]  \EqLine{15} 
\\ & 
 +16 (d + 4) (d^{2} + 10d + 28) \, \Sumgigi^{\nu_{1}\nu_{2}\nu_{3}\nu_{4}} \, \SO_{7}[ u \otimes u \otimes (\hnabla_{\nu_{1}} u) \otimes u \otimes (\hnabla_{\nu_{2}} u) \otimes (\hnabla_{\nu_{3}} u) \otimes (\hnabla_{\nu_{4}} u) ]  \EqLine{16} 
\\ & 
 +4 (d + 4) (d + 6) (d + 8) (d + 10) \, \Sumgigi^{\nu_{1}\nu_{2}\nu_{3}\nu_{4}} \, \SO_{8}[ u \otimes u \otimes (\hnabla_{\nu_{1}} u) \otimes (\hnabla_{\nu_{2}} u) \otimes (\hnabla_{\nu_{3}} u) \otimes (\hnabla_{\nu_{4}} u) \otimes u \otimes u ]  \EqLine{17} 
\\ & 
 +2 (d + 4) (d + 6) (d + 8) (d + 10) \, \Sumgigi^{\nu_{1}\nu_{2}\nu_{3}\nu_{4}} \, \SO_{8}[ u \otimes (\hnabla_{\nu_{1}} u) \otimes u \otimes (\hnabla_{\nu_{2}} u) \otimes (\hnabla_{\nu_{3}} u) \otimes (\hnabla_{\nu_{4}} u) \otimes u \otimes u ]  \EqLine{18} 
\\ & 
 +(d + 4) (d + 6) (d + 8) (d + 10) \, \Sumgigi^{\nu_{1}\nu_{2}\nu_{3}\nu_{4}} \, \SO_{8}[ u \otimes (\hnabla_{\nu_{1}} u) \otimes u \otimes (\hnabla_{\nu_{2}} u) \otimes (\hnabla_{\nu_{3}} u) \otimes u \otimes (\hnabla_{\nu_{4}} u) \otimes u ]  \EqLine{19} 
\\ & 
 +2 (d + 4) (d + 6) (d + 8) (d + 10) \, \Sumgigi^{\nu_{1}\nu_{2}\nu_{3}\nu_{4}} \, \SO_{8}[ u \otimes u \otimes (\hnabla_{\nu_{1}} u) \otimes (\hnabla_{\nu_{2}} u) \otimes (\hnabla_{\nu_{3}} u) \otimes u \otimes (\hnabla_{\nu_{4}} u) \otimes u ]  \EqLine{20} 
\\
& -\tfrac{1}{2}(d + 2) (d + 4)^{2} \, g^{\nu_{1}\nu_{2}} g^{\nu_{3}\nu_{4}} \, \SO_{5}[ u \otimes (\hnabla_{\nu_{1}} u) \otimes (\hnabla_{\nu_{2}} u) \otimes (\hnabla_{\nu_{3}} u) \otimes (\hnabla_{\nu_{4}} u) ]  \EqLine{1} 
\\ & 
 +\tfrac{1}{2}(d + 2) (d + 4) (d + 6) \, g^{\nu_{1}\nu_{2}} g^{\nu_{3}\nu_{4}} \, \SO_{6}[ u \otimes (\hnabla_{\nu_{1}} u) \otimes (\hnabla_{\nu_{2}} u) \otimes (\hnabla_{\nu_{3}} u) \otimes u \otimes (\hnabla_{\nu_{4}} u) ]  \EqLine{2} 
\\ & 
 +\tfrac{1}{2}(d + 2) (d + 4) (d + 6) \, g^{\nu_{1}\nu_{2}} g^{\nu_{3}\nu_{4}} \, \SO_{6}[ u \otimes (\hnabla_{\nu_{1}} u) \otimes u \otimes (\hnabla_{\nu_{2}} u) \otimes (\hnabla_{\nu_{3}} u) \otimes (\hnabla_{\nu_{4}} u) ]  \EqLine{3} 
\\ & 
 +(d + 2) (d + 4) (d + 6) \, g^{\nu_{1}\nu_{2}} g^{\nu_{3}\nu_{4}} \, \SO_{6}[ u \otimes u \otimes (\hnabla_{\nu_{1}} u) \otimes (\hnabla_{\nu_{2}} u) \otimes (\hnabla_{\nu_{3}} u) \otimes (\hnabla_{\nu_{4}} u) ]  \EqLine{4} 
\\ & 
 +\tfrac{1}{4}(d + 2) (d + 4) (d + 6) (d + 8) \, g^{\nu_{1}\nu_{2}} g^{\nu_{3}\nu_{4}} \, \SO_{7}[ u \otimes (\hnabla_{\nu_{1}} u) \otimes (\hnabla_{\nu_{2}} u) \otimes u \otimes (\hnabla_{\nu_{3}} u) \otimes (\hnabla_{\nu_{4}} u) \otimes u ]  \EqLine{5} 
\\[\interblock]
& -\tfrac{1}{12}(d - 2) \, \SC \, \SO_{2}[ u \otimes (\hDelta u) ]  \EqLine{1} 
 +\tfrac{1}{6}d \, \SC \, \SO_{3}[ u \otimes u \otimes (\hDelta u) ]  \EqLine{2} 
\\
& -\tfrac{1}{3}d \, \Ric^{\nu_{1}\nu_{2}} \, \SO_{3}[ u \otimes u \otimes (\hDelta_{\nu_{1}\nu_{2}} u) ]  \EqLine{1} 
 +(d + 2) \, \Ric^{\nu_{1}\nu_{2}} \, \SO_{4}[ u \otimes u \otimes u \otimes (\hDelta_{\nu_{1}\nu_{2}} u) ]  \EqLine{2} 
\\[\interblock]
& +\tfrac{1}{2}(d + 2)^{2} \, g^{\nu_{1}\nu_{2}} \, \SO_{4}[ u \otimes (\hDelta u) \otimes (\hnabla_{\nu_{1}} u) \otimes (\hnabla_{\nu_{2}} u) ]  \EqLine{1} 
 -\tfrac{1}{2}(d + 2) (d + 4) \, g^{\nu_{1}\nu_{2}} \, \SO_{5}[ u \otimes (\hDelta u) \otimes (\hnabla_{\nu_{1}} u) \otimes u \otimes (\hnabla_{\nu_{2}} u) ]  \EqLine{2} 
 \\ & 
 -(d + 4)^{2} \, g^{\nu_{1}\nu_{2}} \, \SO_{5}[ u \otimes u \otimes (\hDelta u) \otimes (\hnabla_{\nu_{1}} u) \otimes (\hnabla_{\nu_{2}} u) ]  \EqLine{3} 
\\ & 
 -\tfrac{1}{4}(d + 2) (d + 4) (d + 6) \, g^{\nu_{1}\nu_{2}} \, \SO_{6}[ u \otimes (\hDelta u) \otimes u \otimes (\hnabla_{\nu_{1}} u) \otimes (\hnabla_{\nu_{2}} u) \otimes u ]  \EqLine{4} 
\\ & 
+(d + 4) (d + 6) \, g^{\nu_{1}\nu_{2}} \, \SO_{6}[ u \otimes u \otimes (\hDelta u) \otimes (\hnabla_{\nu_{1}} u) \otimes u \otimes (\hnabla_{\nu_{2}} u) ]  \EqLine{5} 
\\ & 
-(d + 2) (d + 4) (d + 6) \, g^{\nu_{1}\nu_{2}} \, \SO_{7}[ u \otimes u \otimes (\hDelta u) \otimes (\hnabla_{\nu_{1}} u) \otimes (\hnabla_{\nu_{2}} u) \otimes u \otimes u ]  \EqLine{6} 
\\ & 
-\tfrac{1}{2}(d + 2) (d + 4) (d + 6) \, g^{\nu_{1}\nu_{2}} \, \SO_{7}[ u \otimes u \otimes (\hDelta u) \otimes (\hnabla_{\nu_{1}} u) \otimes u \otimes (\hnabla_{\nu_{2}} u) \otimes u ]  \EqLine{7} 
\\ & 
+3 (d + 4) (d + 6) \, g^{\nu_{1}\nu_{2}} \, \SO_{7}[ u \otimes u \otimes (\hDelta u) \otimes u \otimes (\hnabla_{\nu_{1}} u) \otimes (\hnabla_{\nu_{2}} u) \otimes u ]  \EqLine{8} 
\\ & 
+9 (d + 4) (d + 6) \, g^{\nu_{1}\nu_{2}} \, \SO_{7}[ u \otimes u \otimes u \otimes (\hDelta u) \otimes (\hnabla_{\nu_{1}} u) \otimes (\hnabla_{\nu_{2}} u) \otimes u ]  \EqLine{9} 
\\
& -(d + 2) \, g^{\nu_{1}\nu_{2}} \, \SO_{4}[ u \otimes (\hnabla_{\nu_{1}} u) \otimes (\hDelta u) \otimes (\hnabla_{\nu_{2}} u) ]  \EqLine{1} 
 +\tfrac{1}{2}(d + 4) (3d + 14) \, g^{\nu_{1}\nu_{2}} \, \SO_{5}[ u \otimes (\hnabla_{\nu_{1}} u) \otimes (\hDelta u) \otimes (\hnabla_{\nu_{2}} u) \otimes u ]  \EqLine{2} 
\\ & 
 -2 (d + 4) (d + 6) \, g^{\nu_{1}\nu_{2}} \, \SO_{6}[ u \otimes (\hnabla_{\nu_{1}} u) \otimes (\hDelta u) \otimes (\hnabla_{\nu_{2}} u) \otimes u \otimes u ]  \EqLine{3} 
\\ & 
 -(d + 4) (d + 6) \, g^{\nu_{1}\nu_{2}} \, \SO_{6}[ u \otimes (\hnabla_{\nu_{1}} u) \otimes (\hDelta u) \otimes u \otimes (\hnabla_{\nu_{2}} u) \otimes u ]  \EqLine{4} 
\\ & 
 +\tfrac{1}{2}(d + 4) (d + 6) \, g^{\nu_{1}\nu_{2}} \, \SO_{6}[ u \otimes (\hnabla_{\nu_{1}} u) \otimes u \otimes (\hDelta u) \otimes u \otimes (\hnabla_{\nu_{2}} u) ]  \EqLine{5} 
\\ & 
 +(d + 4) (d + 6) \, g^{\nu_{1}\nu_{2}} \, \SO_{6}[ u \otimes u \otimes (\hnabla_{\nu_{1}} u) \otimes (\hDelta u) \otimes u \otimes (\hnabla_{\nu_{2}} u) ]  \EqLine{6}
\\ & 
 -\tfrac{1}{2}(d + 4) (d + 6) (d + 8) \, g^{\nu_{1}\nu_{2}} \, \SO_{7}[ u \otimes (\hnabla_{\nu_{1}} u) \otimes u \otimes (\hDelta u) \otimes (\hnabla_{\nu_{2}} u) \otimes u \otimes u ]  \EqLine{7} 
\\ & 
 -\tfrac{1}{4}(d + 4) (d + 6) (d + 8) \, g^{\nu_{1}\nu_{2}} \, \SO_{7}[ u \otimes (\hnabla_{\nu_{1}} u) \otimes u \otimes (\hDelta u) \otimes u \otimes (\hnabla_{\nu_{2}} u) \otimes u ]  \EqLine{8} 
\\ & 
 -(d + 4) (d + 6) (d + 8) \, g^{\nu_{1}\nu_{2}} \, \SO_{7}[ u \otimes u \otimes (\hnabla_{\nu_{1}} u) \otimes (\hDelta u) \otimes (\hnabla_{\nu_{2}} u) \otimes u \otimes u ]  \EqLine{9} 
\\ & 
 -\tfrac{1}{2}(d + 4) (d + 6) (d + 8) \, g^{\nu_{1}\nu_{2}} \, \SO_{7}[ u \otimes u \otimes (\hnabla_{\nu_{1}} u) \otimes (\hDelta u) \otimes u \otimes (\hnabla_{\nu_{2}} u) \otimes u ]  \EqLine{10} 
\\
& -(d + 2) \, g^{\nu_{1}\nu_{2}} \, \SO_{4}[ u \otimes (\hnabla_{\nu_{1}} u) \otimes (\hnabla_{\nu_{2}} u) \otimes (\hDelta u) ]  \EqLine{1} 
 +\tfrac{1}{2}(d + 4) (3d + 10) \, g^{\nu_{1}\nu_{2}} \, \SO_{5}[ u \otimes (\hnabla_{\nu_{1}} u) \otimes (\hnabla_{\nu_{2}} u) \otimes (\hDelta u) \otimes u ]  \EqLine{2} 
\\ & 
 +\tfrac{1}{2}(d + 2) (d + 4) \, g^{\nu_{1}\nu_{2}} \, \SO_{5}[ u \otimes (\hnabla_{\nu_{1}} u) \otimes (\hnabla_{\nu_{2}} u) \otimes u \otimes (\hDelta u) ]  \EqLine{3} 
\\ & 
 -(d + 4) (d + 6) \, g^{\nu_{1}\nu_{2}} \, \SO_{6}[ u \otimes (\hnabla_{\nu_{1}} u) \otimes (\hnabla_{\nu_{2}} u) \otimes (\hDelta u) \otimes u \otimes u ]  \EqLine{4} 
\\ & 
 +\tfrac{1}{2}(d + 4) (d + 6) \, g^{\nu_{1}\nu_{2}} \, \SO_{6}[ u \otimes (\hnabla_{\nu_{1}} u) \otimes (\hnabla_{\nu_{2}} u) \otimes u \otimes (\hDelta u) \otimes u ]  \EqLine{5} 
\\ & 
 -\tfrac{1}{2}(d + 4) (d + 6) (d + 8) \, g^{\nu_{1}\nu_{2}} \, \SO_{7}[ u \otimes (\hnabla_{\nu_{1}} u) \otimes (\hnabla_{\nu_{2}} u) \otimes u \otimes u \otimes (\hDelta u) \otimes u ]  \EqLine{6} 
\\ & 
 -\tfrac{1}{2}(d + 4) (d + 6) (d + 8) \, g^{\nu_{1}\nu_{2}} \, \SO_{7}[ u \otimes (\hnabla_{\nu_{1}} u) \otimes (\hnabla_{\nu_{2}} u) \otimes u \otimes (\hDelta u) \otimes u \otimes u ]  \EqLine{7} 
\\ & 
 -\tfrac{1}{2}(d + 4) (d + 6) (d + 8) \, g^{\nu_{1}\nu_{2}} \, \SO_{7}[ u \otimes (\hnabla_{\nu_{1}} u) \otimes u \otimes (\hnabla_{\nu_{2}} u) \otimes (\hDelta u) \otimes u \otimes u ]  \EqLine{8} 
\\ & 
 -\tfrac{1}{4}(d + 4) (d + 6) (d + 8) \, g^{\nu_{1}\nu_{2}} \, \SO_{7}[ u \otimes (\hnabla_{\nu_{1}} u) \otimes u \otimes (\hnabla_{\nu_{2}} u) \otimes u \otimes (\hDelta u) \otimes u ]  \EqLine{9} 
\\ & 
 -(d + 4) (d + 6) (d + 8) \, g^{\nu_{1}\nu_{2}} \, \SO_{7}[ u \otimes u \otimes (\hnabla_{\nu_{1}} u) \otimes (\hnabla_{\nu_{2}} u) \otimes (\hDelta u) \otimes u \otimes u ]  \EqLine{10} 
\\ & 
 -\tfrac{1}{2}(d + 4) (d + 6) (d + 8) \, g^{\nu_{1}\nu_{2}} \, \SO_{7}[ u \otimes u \otimes (\hnabla_{\nu_{1}} u) \otimes (\hnabla_{\nu_{2}} u) \otimes u \otimes (\hDelta u) \otimes u ]  \EqLine{11} 
\\
& +2 (d + 4) (3d + 16) \, \SO_{5}[ u \otimes u \otimes (\hDelta^{\nu_{1}\nu_{2}} u) \otimes (\hnabla_{\nu_{1}} u) \otimes (\hnabla_{\nu_{2}} u) ]  \EqLine{1} 
\\ & 
 -4 (d + 4) (d + 6) \, \SO_{6}[ u \otimes u \otimes (\hDelta^{\nu_{1}\nu_{2}} u) \otimes (\hnabla_{\nu_{1}} u) \otimes u \otimes (\hnabla_{\nu_{2}} u) ]  \EqLine{2} 
\\ & 
 -6 (d + 4) (d + 6) \, \SO_{6}[ u \otimes u \otimes (\hDelta^{\nu_{1}\nu_{2}} u) \otimes u \otimes (\hnabla_{\nu_{1}} u) \otimes (\hnabla_{\nu_{2}} u) ]  \EqLine{3} 
\\ & 
 -18 (d + 4) (d + 6) \, \SO_{6}[ u \otimes u \otimes u \otimes (\hDelta^{\nu_{1}\nu_{2}} u) \otimes (\hnabla_{\nu_{1}} u) \otimes (\hnabla_{\nu_{2}} u) ]  \EqLine{4} 
\\ & 
 -2 (d + 4) (d + 6) (d + 8) \, \SO_{7}[ u \otimes u \otimes (\hDelta^{\nu_{1}\nu_{2}} u) \otimes (\hnabla_{\nu_{1}} u) \otimes (\hnabla_{\nu_{2}} u) \otimes u \otimes u ]  \EqLine{5} 
\\ & 
 -(d + 4) (d + 6) (d + 8) \, \SO_{7}[ u \otimes u \otimes (\hDelta^{\nu_{1}\nu_{2}} u) \otimes (\hnabla_{\nu_{1}} u) \otimes u \otimes (\hnabla_{\nu_{2}} u) \otimes u ]  \EqLine{6} 
\\
& -2 (d + 2) \, \SO_{4}[ u \otimes (\hnabla_{\nu_{1}} u) \otimes (\hDelta^{\nu_{1}\nu_{2}} u) \otimes (\hnabla_{\nu_{2}} u) ]  \EqLine{1} 
 +(d + 4) (3d + 14) \, \SO_{5}[ u \otimes (\hnabla_{\nu_{1}} u) \otimes (\hDelta^{\nu_{1}\nu_{2}} u) \otimes (\hnabla_{\nu_{2}} u) \otimes u ]  \EqLine{2} 
\\ & 
 -4 (d + 4) (d + 6) \, \SO_{6}[ u \otimes (\hnabla_{\nu_{1}} u) \otimes (\hDelta^{\nu_{1}\nu_{2}} u) \otimes (\hnabla_{\nu_{2}} u) \otimes u \otimes u ]  \EqLine{3} 
\\ & 
 -2 (d + 4) (d + 6) \, \SO_{6}[ u \otimes (\hnabla_{\nu_{1}} u) \otimes (\hDelta^{\nu_{1}\nu_{2}} u) \otimes u \otimes (\hnabla_{\nu_{2}} u) \otimes u ]  \EqLine{4} 
\\ & 
 +(d + 4) (d + 6) \, \SO_{6}[ u \otimes (\hnabla_{\nu_{1}} u) \otimes u \otimes (\hDelta^{\nu_{1}\nu_{2}} u) \otimes u \otimes (\hnabla_{\nu_{2}} u) ]  \EqLine{5} 
\\ & 
 +2 (d + 4) (d + 6) \, \SO_{6}[ u \otimes u \otimes (\hnabla_{\nu_{1}} u) \otimes (\hDelta^{\nu_{1}\nu_{2}} u) \otimes u \otimes (\hnabla_{\nu_{2}} u) ]  \EqLine{6} 
\\ & 
 -(d + 4) (d + 6) (d + 8) \, \SO_{7}[ u \otimes (\hnabla_{\nu_{1}} u) \otimes u \otimes (\hDelta^{\nu_{1}\nu_{2}} u) \otimes (\hnabla_{\nu_{2}} u) \otimes u \otimes u ]  \EqLine{7} 
\\ & 
 -\tfrac{1}{2}(d + 4) (d + 6) (d + 8) \, \SO_{7}[ u \otimes (\hnabla_{\nu_{1}} u) \otimes u \otimes (\hDelta^{\nu_{1}\nu_{2}} u) \otimes u \otimes (\hnabla_{\nu_{2}} u) \otimes u ]  \EqLine{8} 
\\ & 
 -2 (d + 4) (d + 6) (d + 8) \, \SO_{7}[ u \otimes u \otimes (\hnabla_{\nu_{1}} u) \otimes (\hDelta^{\nu_{1}\nu_{2}} u) \otimes (\hnabla_{\nu_{2}} u) \otimes u \otimes u ]  \EqLine{9} 
\\ & 
 -(d + 4) (d + 6) (d + 8) \, \SO_{7}[ u \otimes u \otimes (\hnabla_{\nu_{1}} u) \otimes (\hDelta^{\nu_{1}\nu_{2}} u) \otimes u \otimes (\hnabla_{\nu_{2}} u) \otimes u ]  \EqLine{10} 
\\
& +\tfrac{1}{2}(d + 2)^{2} \, \SO_{4}[ u \otimes (\hnabla_{\nu_{1}} u) \otimes \comFUu^{\nu_{1}\nu_{2}} \otimes (\hnabla_{\nu_{2}} u) ]  \EqLine{1} 
 -\tfrac{1}{2}(d + 2) (d + 4) \, \SO_{5}[ u \otimes (\hnabla_{\nu_{1}} u) \otimes \comFUu^{\nu_{1}\nu_{2}} \otimes u \otimes (\hnabla_{\nu_{2}} u) ]  \EqLine{2} 
\\ & 
 -(d + 2) (d + 4) \, \SO_{5}[ u \otimes u \otimes (\hnabla_{\nu_{1}} u) \otimes \comFUu^{\nu_{1}\nu_{2}} \otimes (\hnabla_{\nu_{2}} u) ]  \EqLine{3} 
\\ & 
 -\tfrac{1}{4}(d + 2) (d + 4) (d + 6) \, \SO_{6}[ u \otimes (\hnabla_{\nu_{1}} u) \otimes u \otimes \comFUu^{\nu_{1}\nu_{2}} \otimes (\hnabla_{\nu_{2}} u) \otimes u ]  \EqLine{4} 
\\
& +(d + 4) (d + 6) \, \SO_{4}[ u \otimes (\hnabla_{\nu_{1}} u) \otimes (\hnabla_{\nu_{2}} u) \otimes (\hDelta^{\nu_{1}\nu_{2}} u) ]  \EqLine{1} 
 -4 (d + 4) \, \SO_{5}[ u \otimes (\hnabla_{\nu_{1}} u) \otimes (\hnabla_{\nu_{2}} u) \otimes (\hDelta^{\nu_{1}\nu_{2}} u) \otimes u ]  \EqLine{2} 
\\ & 
 -2 (d + 4) (d + 6) \, \SO_{5}[ u \otimes (\hnabla_{\nu_{1}} u) \otimes (\hnabla_{\nu_{2}} u) \otimes u \otimes (\hDelta^{\nu_{1}\nu_{2}} u) ]  \EqLine{3} 
\\ & 
 +2 (d + 4) (d + 6) \, \SO_{6}[ u \otimes (\hnabla_{\nu_{1}} u) \otimes (\hnabla_{\nu_{2}} u) \otimes u \otimes u \otimes (\hDelta^{\nu_{1}\nu_{2}} u) ]  \EqLine{4} 
\\ & 
 -2 (d + 4) (d + 6) \, \SO_{6}[ u \otimes (\hnabla_{\nu_{1}} u) \otimes u \otimes u \otimes (\hnabla_{\nu_{2}} u) \otimes (\hDelta^{\nu_{1}\nu_{2}} u) ]  \EqLine{5} 
\\ & 
 -4 (d + 4) (d + 6) \, \SO_{6}[ u \otimes u \otimes (\hnabla_{\nu_{1}} u) \otimes u \otimes (\hnabla_{\nu_{2}} u) \otimes (\hDelta^{\nu_{1}\nu_{2}} u) ]  \EqLine{6} 
\\ & 
 -6 (d + 4) (d + 6) \, \SO_{6}[ u \otimes u \otimes u \otimes (\hnabla_{\nu_{1}} u) \otimes (\hnabla_{\nu_{2}} u) \otimes (\hDelta^{\nu_{1}\nu_{2}} u) ]  \EqLine{7} 
\\ & 
 -(d + 4) (d + 6) (d + 8) \, \SO_{7}[ u \otimes (\hnabla_{\nu_{1}} u) \otimes u \otimes (\hnabla_{\nu_{2}} u) \otimes (\hDelta^{\nu_{1}\nu_{2}} u) \otimes u \otimes u ]  \EqLine{8} 
\\ & 
 -2 (d + 4) (d + 6) (d + 8) \, \SO_{7}[ u \otimes u \otimes (\hnabla_{\nu_{1}} u) \otimes (\hnabla_{\nu_{2}} u) \otimes (\hDelta^{\nu_{1}\nu_{2}} u) \otimes u \otimes u ]  \EqLine{9} 
\\
& +\tfrac{1}{2}d (d + 2) \, \SO_{4}[ u \otimes (\hnabla_{\nu_{1}} u) \otimes (\hnabla_{\nu_{2}} u) \otimes \comFUu^{\nu_{1}\nu_{2}} ]  \EqLine{1} 
 -\tfrac{1}{4}(d + 2) (d + 4)^{2} \, \SO_{5}[ u \otimes (\hnabla_{\nu_{1}} u) \otimes u \otimes (\hnabla_{\nu_{2}} u) \otimes \comFUu^{\nu_{1}\nu_{2}} ]  \EqLine{2} 
\\ & 
 -(d + 2) (d + 4) \, \SO_{5}[ u \otimes u \otimes (\hnabla_{\nu_{1}} u) \otimes (\hnabla_{\nu_{2}} u) \otimes \comFUu^{\nu_{1}\nu_{2}} ]  \EqLine{3} 
\\ & 
 +\tfrac{1}{2}(d + 2) (d + 4) (d + 6) \, \SO_{6}[ u \otimes (\hnabla_{\nu_{1}} u) \otimes u \otimes u \otimes (\hnabla_{\nu_{2}} u) \otimes \comFUu^{\nu_{1}\nu_{2}} ]  \EqLine{4} 
\\ & 
 +\tfrac{1}{2}(d + 2) (d + 4) (d + 6) \, \SO_{6}[ u \otimes u \otimes (\hnabla_{\nu_{1}} u) \otimes u \otimes (\hnabla_{\nu_{2}} u) \otimes \comFUu^{\nu_{1}\nu_{2}} ]  \EqLine{5} 
\\[\interblock]
& -\tfrac{1}{2}(3d + 8) \, \SO_{3}[ u \otimes (\hDelta u) \otimes (\hDelta u) ]  \EqLine{1} 
 +(d + 4) \, \SO_{4}[ u \otimes (\hDelta u) \otimes u \otimes (\hDelta u) ]  \EqLine{2} 
+(3d + 10) \, \SO_{4}[ u \otimes u \otimes (\hDelta u) \otimes (\hDelta u) ]  \EqLine{3} 
\\ & 
+\tfrac{1}{4}(d + 4) (d + 6) \, \SO_{5}[ u \otimes (\hDelta u) \otimes u \otimes (\hDelta u) \otimes u ]  \EqLine{4} 
 +2 (d + 4) \, \SO_{5}[ u \otimes (\hDelta u) \otimes (\hDelta u) \otimes u \otimes u ]  \EqLine{5} 
\\ & 
+(d + 4) (d + 6) \, \SO_{6}[ u \otimes u \otimes (\hDelta u) \otimes (\hDelta u) \otimes u \otimes u ]  \EqLine{6} 
\\
& -4 (d + 4) \, \SO_{5}[ u \otimes u \otimes (\hnabla^2_{\nu_{1}\nu_{2}} u) \otimes (\hDelta^{\nu_{1}\nu_{2}} u) \otimes u ]  \EqLine{1} 
+2 (d + 4) (d + 6) \, \SO_{6}[ u \otimes u \otimes (\hnabla^2_{\nu_{1}\nu_{2}} u) \otimes (\hDelta^{\nu_{1}\nu_{2}} u) \otimes u \otimes u ]  \EqLine{2} 
\\
& +\tfrac{1}{2}d (d + 2) \, \SO_{4}[ u \otimes u \otimes (\hnabla^2_{\nu_{1}\nu_{2}} u) \otimes \comFUu^{\nu_{1}\nu_{2}} ]  \EqLine{1} 
-(d + 2) (d + 4) \, \SO_{5}[ u \otimes u \otimes u \otimes (\hnabla^2_{\nu_{1}\nu_{2}} u) \otimes \comFUu^{\nu_{1}\nu_{2}} ]  \EqLine{2} 
\\[\interblock]
& +2 (d + 2) \, g^{\nu_{1}\nu_{3}} g^{\nu_{2}\nu_{4}} \, \SO_{4}[ u \otimes u \otimes (\hnabla_{\nu_{1}} u) \otimes (\hnabla^3_{\nu_{2}\nu_{3}\nu_{4}} u) ]  \EqLine{1} 
-2 (d + 4) \, g^{\nu_{1}\nu_{3}} g^{\nu_{2}\nu_{4}} \, \SO_{5}[ u \otimes (\hnabla_{\nu_{1}} u) \otimes u \otimes u \otimes (\hnabla^3_{\nu_{2}\nu_{3}\nu_{4}} u) ]  \EqLine{2} 
\\ & 
-2 (d + 4) \, g^{\nu_{1}\nu_{3}} g^{\nu_{2}\nu_{4}} \, \SO_{5}[ u \otimes u \otimes u \otimes (\hnabla_{\nu_{1}} u) \otimes (\hnabla^3_{\nu_{2}\nu_{3}\nu_{4}} u) ]  \EqLine{3} 
\\ & 
+(d + 4) (d + 6) \, g^{\nu_{1}\nu_{3}} g^{\nu_{2}\nu_{4}} \, \SO_{6}[ u \otimes (\hnabla_{\nu_{1}} u) \otimes u \otimes u \otimes u \otimes (\hnabla^3_{\nu_{2}\nu_{3}\nu_{4}} u) ]  \EqLine{4} 
\\ & 
-(d + 4) (d + 6) \, g^{\nu_{1}\nu_{3}} g^{\nu_{2}\nu_{4}} \, \SO_{6}[ u \otimes u \otimes (\hnabla_{\nu_{1}} u) \otimes u \otimes (\hnabla^3_{\nu_{2}\nu_{3}\nu_{4}} u) \otimes u ]  \EqLine{5} 
\\ & 
-(d + 4) (d + 6) \, g^{\nu_{1}\nu_{3}} g^{\nu_{2}\nu_{4}} \, \SO_{6}[ u \otimes u \otimes u \otimes (\hnabla_{\nu_{1}} u) \otimes (\hnabla^3_{\nu_{2}\nu_{3}\nu_{4}} u) \otimes u ]  \EqLine{6} 
\\ & 
-(d + 4) (d + 6) \, g^{\nu_{1}\nu_{3}} g^{\nu_{2}\nu_{4}} \, \SO_{6}[ u \otimes u \otimes u \otimes (\hnabla_{\nu_{1}} u) \otimes u \otimes (\hnabla^3_{\nu_{2}\nu_{3}\nu_{4}} u) ]  \EqLine{7} 
\\
& +2 (d + 4) \, g^{\nu_{1}\nu_{3}} g^{\nu_{2}\nu_{4}} \, \SO_{5}[ u \otimes u \otimes u \otimes (\hnabla^3_{\nu_{1}\nu_{2}\nu_{3}} u) \otimes (\hnabla_{\nu_{4}} u) ]  \EqLine{1} 
\\ & 
-(d + 4) (d + 6) \, g^{\nu_{1}\nu_{3}} g^{\nu_{2}\nu_{4}} \, \SO_{6}[ u \otimes u \otimes u \otimes (\hnabla^3_{\nu_{1}\nu_{2}\nu_{3}} u) \otimes (\hnabla_{\nu_{4}} u) \otimes u ]  \EqLine{2} 
\\
& +\tfrac{1}{6}(d^{2} + 4d + 24) \, g^{\nu_{1}\nu_{2}} \, \SO_{3}[ u \otimes (\hnabla_{\nu_{1}} u) \otimes (\hnabla_{\nu_{2}} (\hDelta u)) ]  \EqLine{1} 
 -\tfrac{1}{2}(d^{2} + 10d + 32) \, g^{\nu_{1}\nu_{2}} \, \SO_{4}[ u \otimes u \otimes (\hnabla_{\nu_{1}} u) \otimes (\hnabla_{\nu_{2}} (\hDelta u)) ]  \EqLine{2} 
\\ & 
 +(d + 2) \, g^{\nu_{1}\nu_{2}} \, \SO_{4}[ u \otimes (\hnabla_{\nu_{1}} u) \otimes u \otimes (\hnabla_{\nu_{2}} (\hDelta u)) ]  \EqLine{3} 
-\tfrac{1}{2}(d + 4) (d + 6) \, g^{\nu_{1}\nu_{2}} \, \SO_{5}[ u \otimes (\hnabla_{\nu_{1}} u) \otimes u \otimes u \otimes (\hnabla_{\nu_{2}} (\hDelta u)) ]  \EqLine{4} 
\\ & 
+\tfrac{1}{2}(d + 4) (d + 14) \, g^{\nu_{1}\nu_{2}} \, \SO_{5}[ u \otimes u \otimes u \otimes (\hnabla_{\nu_{1}} u) \otimes (\hnabla_{\nu_{2}} (\hDelta u)) ]  \EqLine{5} 
-2 (d + 4) \, g^{\nu_{1}\nu_{2}} \, \SO_{5}[ u \otimes (\hnabla_{\nu_{1}} u) \otimes u \otimes (\hnabla_{\nu_{2}} (\hDelta u)) \otimes u ]  \EqLine{6} 
\\ & 
-(d + 4) (d + 6) \, g^{\nu_{1}\nu_{2}} \, \SO_{6}[ u \otimes (\hnabla_{\nu_{1}} u) \otimes (\hnabla_{\nu_{2}} (\hDelta u)) \otimes u \otimes u \otimes u ]  \EqLine{7} 
\\
& -\tfrac{1}{12}(d + 4) (d + 6) \, g^{\nu_{1}\nu_{2}} \, \SO_{3}[ u \otimes (\hnabla_{\nu_{1}} u) \otimes (\hDelta (\hnabla_{\nu_{2}} u)) ]  \EqLine{1} 
 +\tfrac{1}{2}(d + 2) (d + 4) \, g^{\nu_{1}\nu_{2}} \, \SO_{4}[ u \otimes u \otimes (\hnabla_{\nu_{1}} u) \otimes (\hDelta (\hnabla_{\nu_{2}} u)) ]  \EqLine{2} 
\\ & 
+(d + 2) \, g^{\nu_{1}\nu_{2}} \, \SO_{4}[ u \otimes (\hnabla_{\nu_{1}} u) \otimes u \otimes (\hDelta (\hnabla_{\nu_{2}} u)) ]  \EqLine{3} 
+\tfrac{1}{2}(d + 4) (d + 6) \, g^{\nu_{1}\nu_{2}} \, \SO_{5}[ u \otimes (\hnabla_{\nu_{1}} u) \otimes (\hDelta (\hnabla_{\nu_{2}} u)) \otimes u \otimes u ]  \EqLine{4} 
\\ & 
-(d + 2) (d + 4) \, g^{\nu_{1}\nu_{2}} \, \SO_{5}[ u \otimes u \otimes u \otimes (\hnabla_{\nu_{1}} u) \otimes (\hDelta (\hnabla_{\nu_{2}} u)) ]  \EqLine{5} 
\\ & 
-(d + 4) (d + 6) \, g^{\nu_{1}\nu_{2}} \, \SO_{6}[ u \otimes (\hnabla_{\nu_{1}} u) \otimes (\hDelta (\hnabla_{\nu_{2}} u)) \otimes u \otimes u \otimes u ]  \EqLine{6} 
\\
& -(3d + 10) \, g^{\nu_{1}\nu_{2}} \, \SO_{4}[ u \otimes u \otimes (\anticomHLHN_{\nu_{1}} u) \otimes (\hnabla_{\nu_{2}} u) ]  \EqLine{1} 
+\tfrac{1}{2}(d + 4) (d + 6) \, g^{\nu_{1}\nu_{2}} \, \SO_{5}[ u \otimes u \otimes (\anticomHLHN_{\nu_{1}} u) \otimes (\hnabla_{\nu_{2}} u) \otimes u ]  \EqLine{2} 
\\ & 
+2 (d + 4) \, g^{\nu_{1}\nu_{2}} \, \SO_{5}[ u \otimes u \otimes (\anticomHLHN_{\nu_{1}} u) \otimes u \otimes (\hnabla_{\nu_{2}} u) ]  \EqLine{3} 
+8 (d + 4) \, g^{\nu_{1}\nu_{2}} \, \SO_{5}[ u \otimes u \otimes u \otimes (\anticomHLHN_{\nu_{1}} u) \otimes (\hnabla_{\nu_{2}} u) ]  \EqLine{4} 
\\ & 
 -(d + 4) (d + 6) \, g^{\nu_{1}\nu_{2}} \, \SO_{6}[ u \otimes u \otimes u \otimes (\anticomHLHN_{\nu_{1}} u) \otimes (\hnabla_{\nu_{2}} u) \otimes u ]  \EqLine{5} 
\\[\interblock]
& -2 (d + 4) \, g^{\nu_{1}\nu_{3}} g^{\nu_{2}\nu_{4}} \, \SO_{5}[ u \otimes u \otimes u \otimes u \otimes (\hnabla^4_{\nu_{1}\nu_{2}\nu_{3}\nu_{4}} u) ]  \EqLine{1} 
\\
& +(d + 2) \, g^{\nu_{1}\nu_{2}} \, \SO_{4}[ u \otimes u \otimes u \otimes (\hnabla_{\nu_{1}} (\hDelta (\hnabla_{\nu_{2}} u))) ]  \EqLine{1} 
 -2 (d + 4) \, g^{\nu_{1}\nu_{2}} \, \SO_{5}[ u \otimes u \otimes u \otimes u \otimes (\hnabla_{\nu_{1}} (\hDelta (\hnabla_{\nu_{2}} u))) ]  \EqLine{2} 
\\
& -\tfrac{1}{2}d \, \SO_{3}[ u \otimes u \otimes (\hDelta (\hDelta u)) ]  \EqLine{1} 
 +2 (d + 2) \, \SO_{4}[ u \otimes u \otimes u \otimes (\hDelta (\hDelta u)) ]  \EqLine{2} 
 -2 (d + 4) \, \SO_{5}[ u \otimes u \otimes u \otimes u \otimes (\hDelta (\hDelta u)) ]  \EqLine{3} 
\\[\interblock]
& +\tfrac{1}{6} \, \SC \, \SO_{1}[ q ]  \EqLine{1} 
+\SO_{2}[ q \otimes q ]  \EqLine{1} 
+\SO_{3}[ u \otimes (\hDelta q) \otimes u ]  \EqLine{1} 
\\[\interblock]
& +\tfrac{1}{2}(d + 2)^{2} \, g^{\nu_{1}\nu_{2}} \, \SO_{4}[ u \otimes (\hnabla_{\nu_{1}} u) \otimes (\hnabla_{\nu_{2}} u) \otimes q ]  \EqLine{1} 
-\tfrac{1}{2}(d + 2) (d + 4) \, g^{\nu_{1}\nu_{2}} \, \SO_{5}[ u \otimes (\hnabla_{\nu_{1}} u) \otimes u \otimes (\hnabla_{\nu_{2}} u) \otimes q ]  \EqLine{2} 
\\ & 
 -(d + 2) (d + 4) \, g^{\nu_{1}\nu_{2}} \, \SO_{5}[ u \otimes u \otimes (\hnabla_{\nu_{1}} u) \otimes (\hnabla_{\nu_{2}} u) \otimes q ]  \EqLine{3} 
\\
& -(d + 2) \, g^{\nu_{1}\nu_{2}} \, \SO_{4}[ u \otimes (\hnabla_{\nu_{1}} u) \otimes q \otimes (\hnabla_{\nu_{2}} u) ]  \EqLine{1} 
+\tfrac{1}{2}(d + 2) (d + 4) \, g^{\nu_{1}\nu_{2}} \, \SO_{5}[ u \otimes (\hnabla_{\nu_{1}} u) \otimes q \otimes (\hnabla_{\nu_{2}} u) \otimes u ]  \EqLine{2} 
\\
& -(d + 2) \, g^{\nu_{1}\nu_{2}} \, \SO_{4}[ u \otimes q \otimes (\hnabla_{\nu_{1}} u) \otimes (\hnabla_{\nu_{2}} u) ]  \EqLine{1} 
-(d + 2) \, g^{\nu_{1}\nu_{2}} \, \SO_{4}[ q \otimes u \otimes (\hnabla_{\nu_{1}} u) \otimes (\hnabla_{\nu_{2}} u) ]  \EqLine{2} 
\\ & 
 +\tfrac{1}{2}(d + 2) (d + 4) \, g^{\nu_{1}\nu_{2}} \, \SO_{5}[ u \otimes q \otimes (\hnabla_{\nu_{1}} u) \otimes (\hnabla_{\nu_{2}} u) \otimes u ]  \EqLine{3} 
 +\tfrac{1}{2}(d + 2) (d + 4) \, g^{\nu_{1}\nu_{2}} \, \SO_{5}[ q \otimes u \otimes (\hnabla_{\nu_{1}} u) \otimes (\hnabla_{\nu_{2}} u) \otimes u ]  \EqLine{4} 
\\
& +\sum_{\ell=1}^{3}(i^{(\ell)}_{q} \SO_{3})[ u \otimes (\hDelta u) ]  \EqLine{1} 
 -\tfrac{1}{2}(d + 2) \, \sum_{\ell=1}^{4}(i^{(\ell)}_{q} \SO_{4})[ u \otimes (\hDelta u) \otimes u ]  \EqLine{2} 
\\[\interblock]
& +2 \, g^{\nu_{1}\nu_{2}} \, \SO_{3}[ u \otimes (\hnabla_{\nu_{1}} q) \otimes (\hnabla_{\nu_{2}} u) ]  \EqLine{1} 
 -(d + 2) \, g^{\nu_{1}\nu_{2}} \, \SO_{4}[ u \otimes (\hnabla_{\nu_{1}} q) \otimes (\hnabla_{\nu_{2}} u) \otimes u ]  \EqLine{2} 
\\
& -(d + 2) \, g^{\nu_{1}\nu_{2}} \, \SO_{4}[ u \otimes (\hnabla_{\nu_{1}} u) \otimes (\hnabla_{\nu_{2}} q) \otimes u ]  \EqLine{1} 
\end{align*}
As explained for $\calR_{2}$, this presentation is not unique.

\begin{remark}
This series of terms may quite seem rather useless \textit{per se}. But the attentive reader will see that these simplified expressions show some repeated structures, for instance the use of the inclusion operator $i^{(\ell)}_{q}$ in the form $\sum_{\ell=1}^k i^{(\ell)}_{q} \SO_k$ and the possible factorization by common polynomials in the dimension parameter $d$ for a large number of terms sharing the same pattern. As can be seen in Corollary~\ref{cor-resultR2-factorized}, the results can also be presented in a more factorized way (something we have not really tried to get for $\calR_4$). Splitting the results into $u$-universal operators $\SO_{\alpha,k}$ and $P$-dependent arguments on which they are applied was a strong motivation to get a better structural perception of heat coefficients. Hitherto, we are not in position to offer more perspectives in that regard but further investigations may reveal structures hidden so far. The computation of  higher-order heat coefficients via the present method could also offer the detection of  possible (hidden) structures. We encourage insightful readers to take an interest in this problem.
\end{remark}

\subsection{\texorpdfstring{If $u$ is parallel for $\nabla$}{If u is parallel for nabla}}
\label{u parallel}

We first remark that when $u = \bbbone$, the contribution of $\calR_{4,4}$ reduces to $(4\pi)^{\,d/2}\calR_{4,4} =  \tfrac{1}{12} \, F^{\nu_{1}\nu_{2}} F_{\nu_{1}\nu_{2}}$  (even if $N^\nu$ is non zero) while $\calR_{4,3} $ vanishes (even if $N^\nu$ is non zero) and the contribution of $\calR_{4,2}$ is 
\begin{equation*}
(4\pi)^{\,d/2} \calR_{4,2} = \big(\tfrac{1}{12}N^{\nu_1}N^{\nu_2}-\tfrac{1}{6} g^{\nu_2\nu_3} (\nabla_{\!\nu_3} N^{\nu_1})\big)\, F_{\nu_1\nu_2}.
\end{equation*}
More generally, when $u$ is parallel for the connection $\nabla$, we get the following 55 terms for $\calR_{4} $; recall from \eqref{eq-G1234} that $\Sumgg_{\nu_1\nu_2\nu_3\nu_4} \vc \tfrac{1}{4} \big( g_{\nu_1\nu_2}g_{\nu_3\nu_4} + g_{\nu_1\nu_3}g_{\nu_2\nu_4} + g_{\nu_1\nu_4}g_{\nu_2\nu_3})$:
\setcounter{termscounter}{0}
\begin{align*}
(4\pi&)^{\,d/2}\calR_{4} ={}\\
&
+\tfrac{1}{180} \, \abs{R}^2 \, \SO_{1}[ u ]  \EqLine{1} 
-\tfrac{1}{180} \, \abs{\Ric}^2 \, \SO_{1}[ u ]  \EqLine{1} 
+\tfrac{1}{30} \, (\hDelta \SC) \, \SO_{1}[ u ]  \EqLine{1} 
+\tfrac{1}{72} \, \SC^2 \, \SO_{1}[ u ]  \EqLine{1} 
+\tfrac{1}{6} \, \SC \, \SO_{1}[ q ]  \EqLine{1} 
+\SO_{2}[ q \otimes q ]  \EqLine{1} 
+\SO_{3}[ u \otimes (\hDelta q) \otimes u ]  \EqLine{1} 
\\
&
+\tfrac{1}{12} \, (\hnabla_{\nu_{1}} \SC) \, \SO_{1}[ N^{\nu_{1}} ]  \EqLine{1} 
 -\tfrac{1}{2} \, (\hnabla_{\nu_{1}} \SC) \, \SO_{3}[ u \otimes u \otimes N^{\nu_{1}} ]  \EqLine{2} 
+\tfrac{1}{6} \, \Ric_{\nu_{1}\nu_{2}} \, \SO_{2}[ N^{\nu_{1}} \otimes N^{\nu_{2}} ]  \EqLine{1} 
 -\tfrac{1}{2} \, \Ric_{\nu_{1}\nu_{2}} \, \SO_{3}[ N^{\nu_{1}} \otimes u \otimes N^{\nu_{2}} ]  \EqLine{2} 
\\
&
-\tfrac{1}{12} \, g_{\nu_{1}\nu_{2}} \SC \, \SO_{2}[ N^{\nu_{1}} \otimes N^{\nu_{2}} ]  \EqLine{1} 
-\tfrac{1}{2} \, g_{\nu_{1}\nu_{2}} \, \sum_{\ell=1}^{3}(i^{(\ell)}_{q} \SO_{3})[ N^{\nu_{1}} \otimes N^{\nu_{2}} ]  \EqLine{1} 
+\Sumgg_{\nu_{1}\nu_{2}\nu_{3}\nu_{4}} \, \SO_{4}[ N^{\nu_{1}} \otimes N^{\nu_{2}} \otimes N^{\nu_{3}} \otimes N^{\nu_{4}} ]  \EqLine{1} 
\\
&
-\SO_{3}[ u \otimes (\hnabla_{\nu_{1}} q) \otimes N^{\nu_{1}} ]  \EqLine{1} 
+\SO_{3}[ N^{\nu_{1}} \otimes (\hnabla_{\nu_{1}} q) \otimes u ]  \EqLine{1} 
\\
&
+\tfrac{1}{3} \, g^{\nu_{1}\nu_{3}} \Ric_{\nu_{2}\nu_{3}} \, \SO_{2}[ u \otimes (\hnabla_{\nu_{1}} N^{\nu_{2}}) ]  \EqLine{1} 
 -g^{\nu_{1}\nu_{3}} \Ric_{\nu_{2}\nu_{3}} \, \SO_{3}[ u \otimes u \otimes (\hnabla_{\nu_{1}} N^{\nu_{2}}) ]  \EqLine{2} 
-\tfrac{1}{6} \, \SC \, \SO_{2}[ u \otimes (\hnabla_{\nu_{1}} N^{\nu_{1}}) ]  \EqLine{1} 
\\
&
-\sum_{\ell=1}^{3}(i^{(\ell)}_{q} \SO_{3})[ u \otimes (\hnabla_{\nu_{1}} N^{\nu_{1}}) ]  \EqLine{1} 
\\
&
+\tfrac{1}{2} \, g_{\nu_{1}\nu_{2}} \, \SO_{3}[ N^{\nu_{1}} \otimes N^{\nu_{2}} \otimes (\hnabla_{\nu_{3}} N^{\nu_{3}}) ]  \EqLine{1} 
 -\tfrac{1}{2} \, g_{\nu_{1}\nu_{2}} \, \SO_{4}[ N^{\nu_{1}} \otimes N^{\nu_{2}} \otimes (\hnabla_{\nu_{3}} N^{\nu_{3}}) \otimes u ]  \EqLine{2} 
\\
&
-\tfrac{1}{2} \, g_{\nu_{1}\nu_{3}} \, \SO_{4}[ N^{\nu_{1}} \otimes N^{\nu_{2}} \otimes (\hnabla_{\nu_{2}} N^{\nu_{3}}) \otimes u ]  \EqLine{1} 
-\tfrac{1}{2} \, g_{\nu_{2}\nu_{3}} \, \SO_{4}[ N^{\nu_{1}} \otimes N^{\nu_{2}} \otimes (\hnabla_{\nu_{1}} N^{\nu_{3}}) \otimes u ]  \EqLine{1} 
\\
&
+\tfrac{1}{2} \, g_{\nu_{1}\nu_{3}} \, \SO_{4}[ N^{\nu_{1}} \otimes u \otimes (\hnabla_{\nu_{2}} N^{\nu_{2}}) \otimes N^{\nu_{3}} ]  \EqLine{1} 
 +\tfrac{1}{2} \, g_{\nu_{1}\nu_{3}} \, \SO_{4}[ u \otimes N^{\nu_{1}} \otimes (\hnabla_{\nu_{2}} N^{\nu_{2}}) \otimes N^{\nu_{3}} ]  \EqLine{2} 
\\
&
+\tfrac{1}{2} \, g_{\nu_{1}\nu_{3}} \, \SO_{4}[ N^{\nu_{1}} \otimes u \otimes (\hnabla_{\nu_{2}} N^{\nu_{3}}) \otimes N^{\nu_{2}} ]  \EqLine{1} 
 +\tfrac{1}{2} \, g_{\nu_{1}\nu_{3}} \, \SO_{4}[ u \otimes N^{\nu_{1}} \otimes (\hnabla_{\nu_{2}} N^{\nu_{3}}) \otimes N^{\nu_{2}} ]  \EqLine{2} 
\\
&
-\tfrac{1}{2} \, g_{\nu_{2}\nu_{3}} \, \SO_{4}[ N^{\nu_{1}} \otimes (\hnabla_{\nu_{1}} N^{\nu_{2}}) \otimes N^{\nu_{3}} \otimes u ]  \EqLine{1} 
 -\tfrac{1}{2} \, g_{\nu_{2}\nu_{3}} \, \SO_{4}[ N^{\nu_{1}} \otimes (\hnabla_{\nu_{1}} N^{\nu_{2}}) \otimes u \otimes N^{\nu_{3}} ]  \EqLine{2} 
\\
&
+\tfrac{1}{2} \, g_{\nu_{2}\nu_{3}} \, \SO_{4}[ u \otimes (\hnabla_{\nu_{1}} N^{\nu_{1}}) \otimes N^{\nu_{2}} \otimes N^{\nu_{3}} ]  \EqLine{1} 
+\tfrac{1}{2} \, g_{\nu_{2}\nu_{3}} \, \SO_{4}[ u \otimes (\hnabla_{\nu_{1}} N^{\nu_{2}}) \otimes N^{\nu_{1}} \otimes N^{\nu_{3}} ]  \EqLine{1} 
\\
&
+\tfrac{1}{2} \, g_{\nu_{2}\nu_{3}} \, \SO_{4}[ u \otimes (\hnabla_{\nu_{1}} N^{\nu_{2}}) \otimes N^{\nu_{3}} \otimes N^{\nu_{1}} ]  \EqLine{1} 
\\
&
-\SO_{4}[ u \otimes (\hnabla_{\nu_{1}} N^{\nu_{2}}) \otimes (\hnabla_{\nu_{2}} N^{\nu_{1}}) \otimes u ]  \EqLine{1} 
-g_{\nu_{2}\nu_{4}} g^{\nu_{1}\nu_{3}} \, \SO_{4}[ u \otimes (\hnabla_{\nu_{1}} N^{\nu_{2}}) \otimes (\hnabla_{\nu_{3}} N^{\nu_{4}}) \otimes u ]  \EqLine{1} 
\\
&
+\SO_{4}[ u \otimes (\hnabla_{\nu_{1}} N^{\nu_{1}}) \otimes u \otimes (\hnabla_{\nu_{2}} N^{\nu_{2}}) ]  \EqLine{1} 
 +2 \,  \, \SO_{4}[ u \otimes u \otimes (\hnabla_{\nu_{1}} N^{\nu_{1}}) \otimes (\hnabla_{\nu_{2}} N^{\nu_{2}}) ]  \EqLine{2} 
\\
&
+\tfrac{1}{2} \,  \, \SO_{3}[ u \otimes N^{\nu_{1}} \otimes \comFN^{\nu_{2}}_{\nu_{1}\nu_{2}} ]  \EqLine{1} 
 -\tfrac{1}{2} \,  \, \SO_{3}[ N^{\nu_{1}} \otimes u \otimes \comFN^{\nu_{2}}_{\nu_{1}\nu_{2}} ]  \EqLine{2} 
\\
&
-\SO_{3}[ N^{\nu_{1}} \otimes (\hDelta_{\nu_{1}\nu_{2}} N^{\nu_{2}}) \otimes u ]  \EqLine{1} 
 +2 \,  \, \SO_{4}[ N^{\nu_{1}} \otimes (\hDelta_{\nu_{1}\nu_{2}} N^{\nu_{2}}) \otimes u \otimes u ]  \EqLine{2} 
+2 \,  \, \SO_{4}[ u \otimes u \otimes (\hDelta_{\nu_{1}\nu_{2}} N^{\nu_{1}}) \otimes N^{\nu_{2}} ]  \EqLine{1} 
\\
&
-\tfrac{1}{2} \, g_{\nu_{1}\nu_{2}} \, \SO_{3}[ u \otimes (\hDelta N^{\nu_{1}}) \otimes N^{\nu_{2}} ]  \EqLine{1} 
 +g_{\nu_{1}\nu_{2}} \, \SO_{4}[ u \otimes u \otimes (\hDelta N^{\nu_{1}}) \otimes N^{\nu_{2}} ]  \EqLine{2} 
\\
&
-\tfrac{1}{2} \, g_{\nu_{1}\nu_{2}} \, \SO_{3}[ N^{\nu_{1}} \otimes (\hDelta N^{\nu_{2}}) \otimes u ]  \EqLine{1} 
 +g_{\nu_{1}\nu_{2}} \, \SO_{4}[ N^{\nu_{1}} \otimes (\hDelta N^{\nu_{2}}) \otimes u \otimes u ]  \EqLine{2} 
+2 \, g^{\nu_{1}\nu_{3}} \, \SO_{4}[ u \otimes u \otimes u \otimes (\hnabla^3_{\nu_{1}\nu_{2}\nu_{3}} N^{\nu_{2}}) ]  \EqLine{1} 
\\
&
-\SO_{3}[ u \otimes u \otimes (\anticomHLHN_{\nu_{1}} N^{\nu_{1}}) ]  \EqLine{1} 
 +2 \,  \, \SO_{4}[ u \otimes u \otimes u \otimes (\anticomHLHN_{\nu_{1}} N^{\nu_{1}}) ]  \EqLine{2} 
\\
&
+\tfrac{1}{2} \,  \, \SO_{2}[ N^{\nu_{1}} \otimes N^{\nu_{2}} ]  \, F_{\nu_{1}\nu_{2}} \EqLine{1} 
 -\SO_{3}[ N^{\nu_{1}} \otimes u \otimes N^{\nu_{2}} ]  \, F_{\nu_{1}\nu_{2}} \EqLine{2} 
-g^{\nu_{2}\nu_{3}} \, \SO_{3}[ u \otimes (\hnabla_{\nu_{3}} N^{\nu_{1}}) \otimes u ]  \, F_{\nu_{1}\nu_{2}} \EqLine{1} 
\\
&
+\tfrac{1}{6} \, g^{\nu_{1}\nu_{3}} \, \SO_{1}[ N^{\nu_{2}} ]  \, (\hnabla_{\nu_{1}} F_{\nu_{2}\nu_{3}}) \EqLine{1} 
 -g^{\nu_{1}\nu_{3}} \, \SO_{3}[ u \otimes N^{\nu_{2}} \otimes u ]  \, (\hnabla_{\nu_{1}} F_{\nu_{2}\nu_{3}}) \EqLine{2} 
+\tfrac{1}{12} \,  \, \SO_{1}[ u ]  \, F^{\nu_{1}\nu_{2}} F_{\nu_{1}\nu_{2}} \EqLine{1} 
\end{align*}

\setcounter{termscounter}{0}

With $N^\mu = 0$, this shrinks to 8 terms:
\begin{align*}
(4\pi)^{\,d/2}\calR_{4} ={}
&
+\tfrac{1}{180} \, \abs{R}^2 \, \SO_{1}[ u ]  \EqLine{1} 
-\tfrac{1}{180} \, \abs{\Ric}^2 \, \SO_{1}[ u ]  \EqLine{1} 
+\tfrac{1}{30} \, (\hDelta \SC) \, \SO_{1}[ u ]  \EqLine{1} 
+\tfrac{1}{72} \, \SC^2 \, \SO_{1}[ u ]  \EqLine{1} 
+\tfrac{1}{12} \,  \, \SO_{1}[ u ]  \, F^{\nu_{1}\nu_{2}} F_{\nu_{1}\nu_{2}} \EqLine{1} 
\\
&
+\tfrac{1}{6} \, \SC \, \SO_{1}[ q ]  \EqLine{1} 
+\SO_{2}[ q \otimes q ]  \EqLine{1} 
+\SO_{3}[ u \otimes (\hDelta q) \otimes u ]  \EqLine{1} 
,
\end{align*}
and agrees with \cite[Theorem 3.3.1]{Gilk03a} when $u = \bbbone$ using \eqref{eq-Xk for u diagonal}.

\section{Conclusion}

In this work we have developed for a non minimal Laplace type operator a new method to compute any $\calR_r$ in terms of universal operators $X_{\alpha,k}$ with explicit details of the case $r=2$. A computer code issued by this method has already produced new results for $\calR_4$. This code is ready for such computations when $r \geq 6$ and moreover it can be particularized and adapted to more specific situations like the (rational) noncommutative torus, see for instance \cite{ConnesFath16}, or in quantum field theory as described in \cite[Section 5.4]{IochMass17a}.
Among possible perspectives, the method could also be generalized to operators $P$ acting on operator algebras, for instance constructed from spectral triples.

\section*{Acknowledgments}

The authors are strongly indebted to Laurent Raymond for helpful discussions concerning some aspects of the computer code at the early stage of this work.

\bibliographystyle{plainnat}
\bibliography{Heat-trace-biblio}

\end{document}